\providecommand{\U}[1]{\protect\rule{.1in}{.1in}}
\newtheorem{theorem}{Theorem}
\newtheorem{lemma}{Lemma}
\newtheorem{corollary}{Corollary}
\begin{document}
\title{New production matrices for geometric graphs}

\author[1]{Guillermo Esteban\thanks{Emails:
		\texttt{guillermoesteban@cmail.carleton.ca}}}
\author[2]{Clemens Huemer, Rodrigo I. Silveira\thanks{Emails:
		\texttt{\{clemens.huemer, rodrigo.silveira\}@upc.edu}}}
\affil[1]{Carleton University, Ottawa, Canada}
\affil[2]{Universitat Polit\`ecnica de Catalunya, Barcelona, Spain}

\date{}
\maketitle

\begin{abstract}
	We use production matrices to count several classes of geometric graphs. We present novel production matrices for non-crossing partitions, connected geometric graphs, and $ k $-angulations, which provide another way of counting the number of such objects. Counting geometric graphs is then equivalent to calculating the powers of a production matrix. Applying the technique of Riordan Arrays to these production matrices, we establish new formulas for the numbers of geometric graphs as well as combinatorial identities derived from the production matrices. Further, we obtain the characteristic polynomial and the eigenvectors of such production matrices.
\end{abstract}

\section{Introduction}

This work is devoted to the prominent problem of counting geometric graphs. A \emph{geometric graph} on a finite set of points $ \mathcal{S}\subseteq \mathbb{R}^2 $ is a graph with vertex set $ \mathcal{S} $ whose edges are straight-line segments with endpoints in $ \mathcal{S} $. It is called \emph{plane} if no two edges intersect except at common endpoints. We focus on plane graphs on a set $ \mathcal{S} $ of $ n $ points in convex position, which will be labelled $ \{p_{1},...,p_{n}\} $ in counter-clockwise order. Numerous geometric graph classes exist, such as triangulations (i.e., plane graphs all whose faces, except possibly the exterior one, are triangles), connected graphs, or spanning trees. A fundamental problem is to determine the number of  graphs, for each class, as a function of $ n $. Already in 1753, Euler and Segner determined the number of triangulations. These numbers are the well-known Catalan numbers. For many other classes of plane graphs, such as trees, forests, dissections, non-crossing partitions, connected graphs, and geometric graphs, Flajolet and Noy \textcolor{blue}{\cite{Noy}} gave formulas for their numbers. Some of such formulas were also obtained earlier, see for instance \textcolor{blue}{\cite{Kreweras}}, where non-crossing partitions were discussed for the first time, \textcolor{blue}{\cite{Dulucq}}, where trees were first enumerated or \textcolor{blue}{\cite{Comtet}}, where results for dissection were summarized.

\vspace{3.5mm}

The subject of counting plane geometric graphs for points in convex position has been studied intensively. There exist different methods that provide formulas for the number of graphs in a given graph class with $ n $ vertices. For an account of classical results, we refer to Comtet's book \textcolor{blue}{\cite{Comtet}}. Furthermore, Flajolet and Noy \textcolor{blue}{\cite{Noy}} used tools from analytic combinatorics to obtain other formulas for some graph classes unifying specifications of combinatorial structures with generating functions. Also, when analyzing more complex parameters, such as extremal parameters \textcolor{blue}{\cite{Asymp}}, there is also a need for other methods like singularity analysis, first and second moment method or iterated functions.

Throughout this paper we make use of another way of enumerating plane graphs on point sets in convex position: \textit{production matrices}. This method, in combination with \emph{Riordan Arrays} \textcolor{blue}{\cite{Shapiro}}, has been used recently to count triangulations, spanning trees and geometric graphs \textcolor{blue}{\cite{Clemens2, Geometric}}. Our objective is to use this method to obtain new formulas to count these kind of graphs as well as connected graphs and $ k $-angulations.

\vspace{3.5mm}

Recently, it was shown how different classes of geometric graphs can be counted by using an $ n \times n $ matrix $ A_{n} $, called production matrix, associated to each graph class \textcolor{blue}{\cite{Clemens2, Geometric, Clemens3}}. The number of these graphs for a fixed number of vertices is given by (a column of) a power of $ A_{n} $, while their asymptotic number, as $ n $ tends to infinity, is governed by the largest eigenvalue of $ A_{n} $. To derive a production matrix for a certain graph class, first the graphs with $ i \leq n $ vertices are partitioned according to the \emph{degree} of a specified root vertex (several definitions of degree are possible). Then, each part is counted in the entries of an $ n $-element vector $ v^i $ and hence, the sum of the vector elements gives the number of geometric graphs for $ i $ vertices. A production matrix $ A_n $ is a matrix satisfying $ v^{i+1} = A_n v^i $. Furthermore, an initial vector $ v^c $ contains the count for a constant number of vertices (usually $ (1,0,...,0)^{\top} $ and $ c=1 $, but for the class of geometric graphs the initial vector will be $ (2,0,...,0)^{\top} $, in this case we start with $ c=2 $ vertices). It then follows that $ v^{i+1} = A^{i+1-c}_n v^c $ for $ i+1 \geq c $. The method above to define $ A_{n} $ based on a vertex degree, implicitly arranges graphs into a tree structure known as \emph{generating tree} \textcolor{blue}{\cite{Chung}}. In this work we will use different definitions of vertex degree than in \textcolor{blue}{\cite{Clemens2, Geometric}}, which results in different generating trees and different production matrices. We then will use Riordan Arrays to analyze the obtained production matrices giving formulas for the number of graphs, characteristic polynomials and eigenvectors.

\subsection{Previous results}

The concept of production matrix was first introduced and studied by Deutsch et al. \textcolor{blue}{\cite{Combinatorial1}}. Also, these matrices were defined to be infinite and in a triangular form \textcolor{blue}{\cite{Combinatorial2}}. One of the results obtained in \textcolor{blue}{\cite{Combinatorial1}} was a production matrix for Catalan structures, including triangulations, shown in Table \ref{fig:11}(a), and also obtained by Merlini and Verri \textcolor{blue}{\cite{Combinatorial2}}. Recently, Huemer et al. \textcolor{blue}{\cite{Clemens2, Geometric, Clemens3}} obtained production matrices for other graph classes, and computed formulas for the characteristic polynomials of those matrices, as well as formulas for the entries of vectors that enumerate the number of graphs. In this section, we summarize the main previous results. In Table \ref{fig:11}, examples of $ n \times n $ production matrices from \textcolor{blue}{\cite{Clemens2}} for different graph classes for $ n = 6 $ are presented. Matrices (a)-(e) are for graphs with at most $ n $ points and matrix $ (f) $ is for paths with at most $ \frac{n}{2} $ points.

\vspace{3.5mm}

\begin{center}
	\includegraphics[scale=0.9]{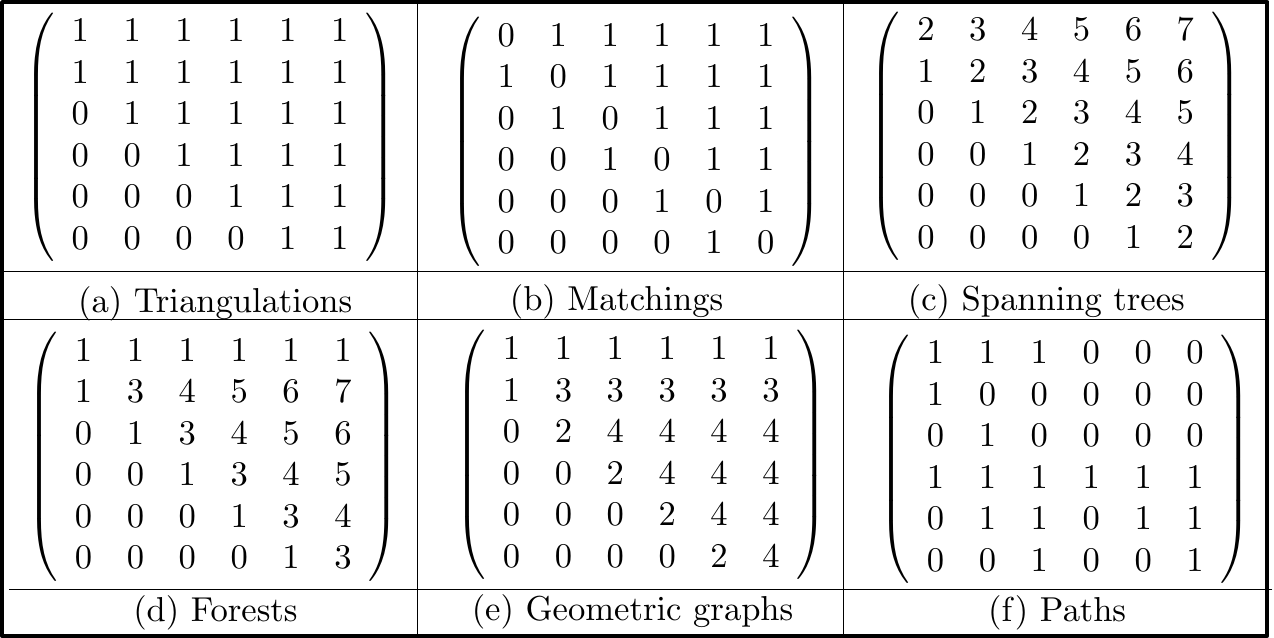}
	\captionof{table}{Production matrices for six different graph classes \textcolor{blue}{\cite{Clemens2}}.}
	\label{fig:11}
\end{center}

\bigskip

In \textcolor{blue}{\cite{Geometric}}, formulas for the entries of the vector $ v^{n} $ were also given. In addition, the characteristic polynomials of the $ n \times n $ production matrices given in Table \ref{fig:11} are given in \textcolor{blue}{\cite{Clemens2}}.

\vspace{3.5mm}

Many of the results obtained in \textcolor{blue}{\cite{Clemens2, Geometric, Clemens3}} are related to Catalan numbers \textcolor{blue}{\cite{Catalan1,Catalan2}} and other well-known sequences of numbers. For instance, the Ballot number $ B_{n,k} $ appears in the formula of the vector for the number of triangulations and also in the one for matchings, due to its relation with Catalan numbers (note that $ \sum_{k}B_{n,k} $ is a Catalan number). Also, a relation between Fibonacci numbers and the class of graphs of forests appears in \textcolor{blue}{\cite{Clemens2}}.

\subsection{Our results}

In this section we summarize the different results presented in the paper. More results in several directions are also given in Esteban's Thesis \textcolor{blue}{\cite{tesis}}.

\vspace{3.5mm}

\textbf{Production matrices}\smallskip

This work is an extension of previous results \textcolor{blue}{\cite{Clemens2, Geometric}} in several directions. We present new production matrices that count $ k $-angulations, geometric graphs, connected geometric graphs, non-crossing partitions, forests, and forests of paths. In the case of geometric graphs and non-crossing partitions production matrices were already known, but we define new production matrices for these two graph classes. We summarize the production matrices we obtained for each graph class studied in Tables \ref{fig:12} and \ref{fig:14}.

\begin{center}
	\includegraphics[scale=0.88]{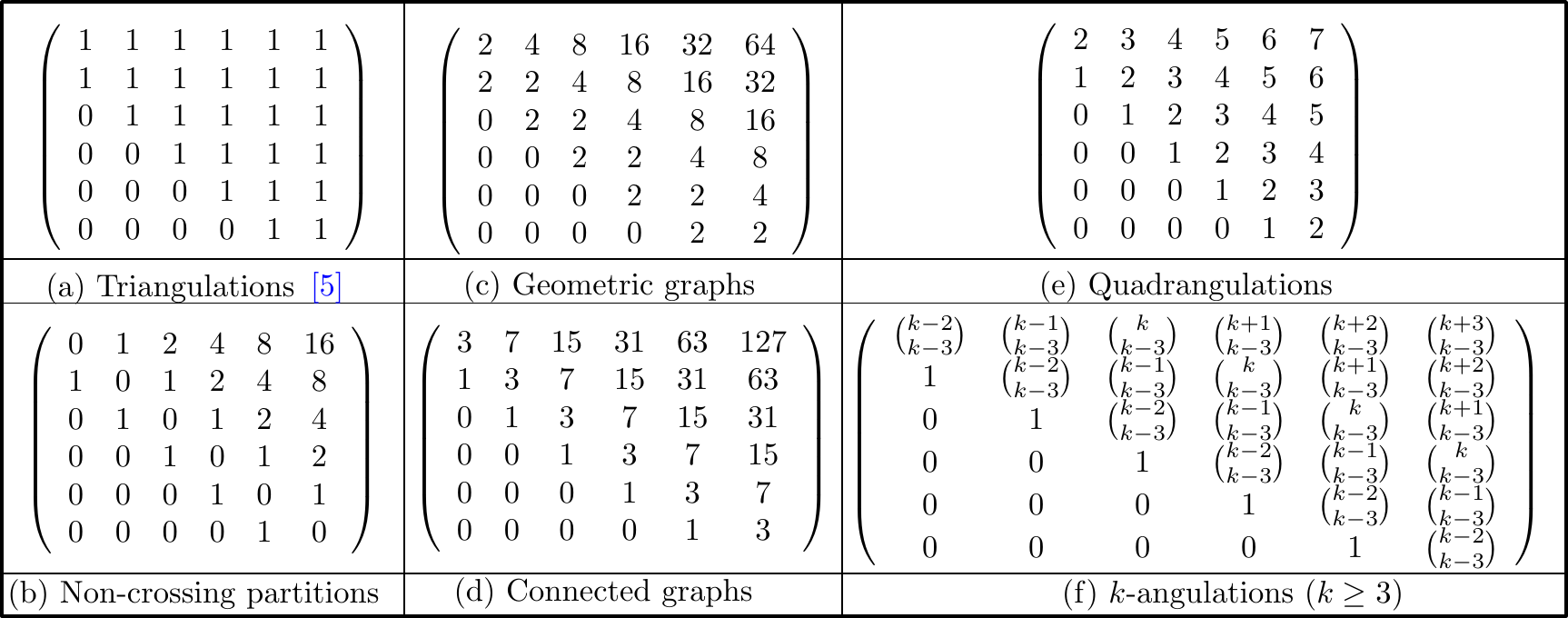}
	\captionof{table}{Production matrices for six graph classes.}
	\label{fig:12}
\end{center}

An important concept that we have to take into account when defining a production matrix is the degree of the root vertex of a graph. While in previous works \textcolor{blue}{\cite{Clemens2, Geometric}}, for most matrices the degree of a vertex was defined as the number of incident edges, in this work we use different definitions of vertex degree. Matrix (a) in Table \ref{fig:12} \textcolor{blue}{\cite{Combinatorial1, Combinatorial2}} is used to count triangulations and other Catalan structures like non-crossing partitions and perfect matchings. In Section 3, we generalize matrix (a) to quadrangulations and, further, to $ k $-angulations, see matrices (e) and (f) in Table \ref{fig:12}. Due to the fact that there exists a bijection between quadrangulations and spanning trees \textcolor{blue}{\cite{Panholzer}}, we know that matrix (e) is also a production matrix for spanning trees, obtained using a different approach than the one applied in \textcolor{blue}{\cite{Geometric}}. For the graph class of $ k $-angulations we use as degree definition the number of edges incident to the root vertex, minus $ 2 $. In Sections 4 and 5 we study production matrices for geometric and connected geometric graphs, see matrices (c) and (d). For these graph classes, the degree of the root vertex is defined in a different way, based on visibility: the degree of the root vertex $ p_{n} $ is the number of visible vertices from a vertex $ p_{n+1} $ inserted between $ p_{1} $ and $ p_{n} $ in convex position, minus $ 2 $. Two vertices are \textit{visible} if the line segment connecting them does not intersect the interior of any edge of the graph. An example to illustrate the concept of \textit{visibility degree} is given in Figure \ref{fig:10}. We observe that in the right graph, the degree of the root vertex $ p_{12} $, for visibility degree and for the number of incident edges, coincides, and it is $ 3 $. However, in the left graph, with the degree definition of \textcolor{blue}{\cite{Geometric}}, the root vertex $ p_{12} $ would have degree $ 0 $, while with the new definition, $ p_{12} $ has visibility degree $ 3 $.

\begin{center}
	\includegraphics[scale=0.73]{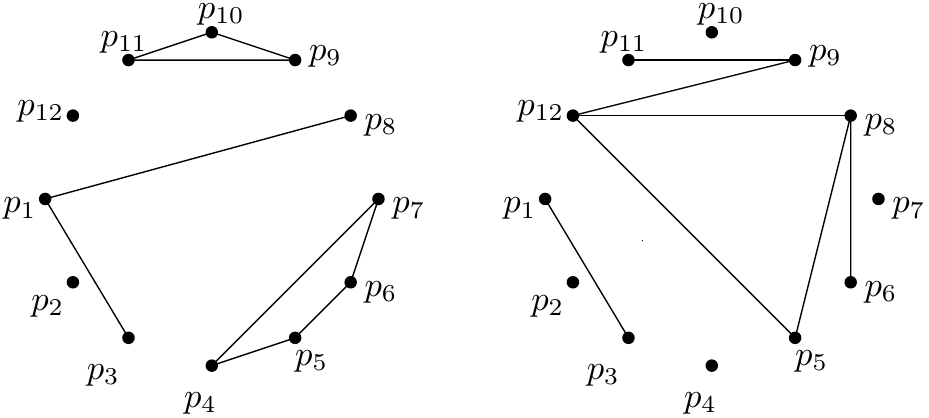}
	\captionof{figure}{Geometric graphs with $ n=12 $ vertices.}
	\label{fig:10}
\end{center}

We point out that matrix (c) in Table \ref{fig:12} is a different matrix for geometric graphs than the one obtained in \textcolor{blue}{\cite{Geometric}}, see Table \ref{fig:11}(e). As for matrix (d), this is the first time that a production matrix for connected geometric graphs is derived, to the best of our knowledge. In Section $ 5.3 $ we present another interesting result: a production matrix that relates the class of geometric graphs to other classes of graphs: connected geometric graphs, spanning trees and paths. The entries of such a matrix are calculated with the sum of the numbers of connected graphs and it gives the number of geometric graphs of the same graph class. For these graph classes, the root vertex degree is based on yet another definition of visibility, called \textit{isolation degree}: the degree of the root vertex is the number of isolated visible vertices from a vertex $ p_{n+1} $ inserted between $ p_{1} $ and $ p_{n} $ in convex position. An example to illustrate the concept of isolation degree is given in Figure \ref{fig:13}. With the degree definition of \textcolor{blue}{\cite{Geometric}}, the root vertex $ p_{12} $ would have degree $ 0 $, while with the new definition, $ p_{12} $ has isolation degree $ 2 $.

\begin{center}
	\includegraphics[scale=0.73]{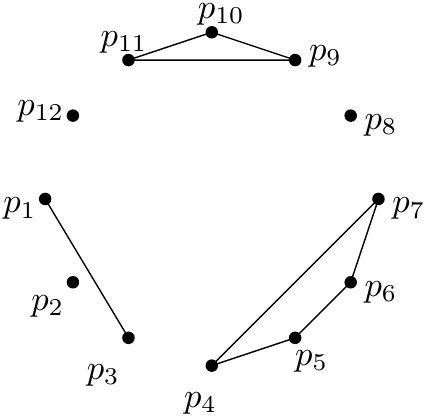}
	\captionof{figure}{A geometric graph (non-crossing partition) with $ n=12 $ vertices and two isolated vertices $ \{p_8, p_{12}\} $. The isolation degree of the root vertex $ g_{12} $ is $ 2 $.}
	\label{fig:13}
\end{center}

Table \ref{fig:14} shows production matrices for three graph classes. The production matrix in Table \ref{fig:14}(a) for geometric graphs is different from the previous ones, i.e., compare to Table \ref{fig:11}(e) and Table \ref{fig:12}(c), and the entries of the first row are calculated with the sum of connected graphs given as Sequence $ A007297 $ in The On-Line Encyclopedia of Integer Sequences (OEIS) \textcolor{blue}{\cite{OEIS}}. In a similar manner, for plane forests in Table \ref{fig:14}(b) (see Sequence $ A054727 $ in OEIS), we find the entries of the first row of the production matrix in OEIS as Sequence $ A307678 $ and they are given by the sum of the number of spanning trees. In the case of spanning forests of paths in Table \ref{fig:14}(c), the entries of the first row of the production matrix correspond to Sequence $ A006234 $ in OEIS and we can calculate them with the sum of spanning paths given as Sequence $ A001792 $ in OEIS.

\begin{center}
	\includegraphics[scale=0.88]{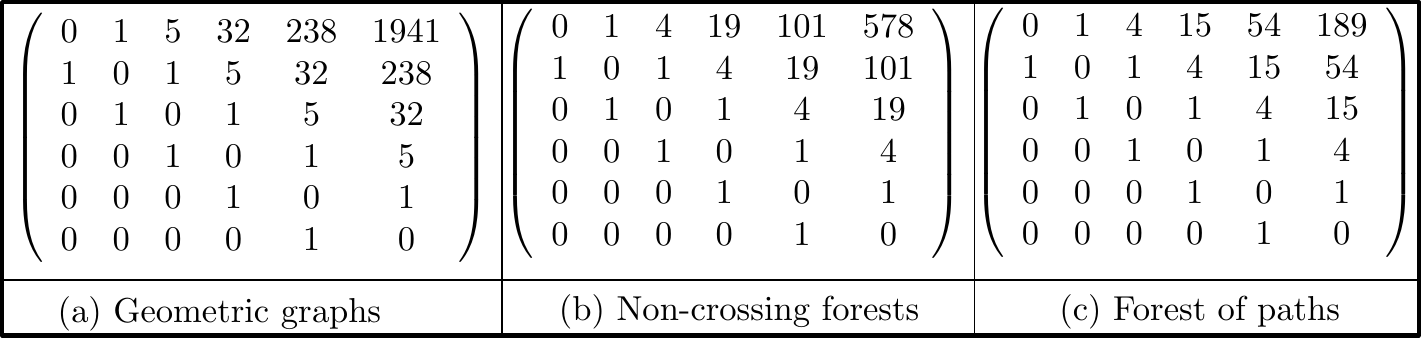}
	\captionof{table}{Production matrices for three graph classes, for $ n = 6 $, obtained by exploiting the relation between geometric graphs and connected graphs.}
	\label{fig:14}
\end{center}\smallskip

Finally, in Section $ 6 $ we study non-crossing partitions. To define the new production matrix, we use the same degree definition as in Section $ 5.3 $: the number of isolated visible vertices from a vertex $ p_{n+1} $ inserted in convex position between $ p_{n} $ and $ p_{1} $ (see Figure \ref{fig:13}). We point out that production matrix (b) in Table \ref{fig:12} provides an alternative production matrix for Catalan structures. This leads to a partition formula of Catalan numbers $ C_n $ that we have not found in the literature, see Theorem \ref{thm:2}, similar to the well-known formula $ C_n = \sum_k B_{n,k} $ with Ballot numbers $ B_{n,k} $.

\vspace{3.5mm}

\textbf{Vectors that count graphs}
\smallskip

In addition to devising production matrices, we deduce formulas for vectors counting graphs with a given root degree. The results for the entries of those vectors for the different graph classes studied are stated in Table \ref{fig:1}. For $ k $-angulations, the formula counts the number of $ k $-angulations with $ r $ $ k $-gons, on $ n $ points, and root vertex degree $ j-1 $, for $ j=1,\ldots,r $. For non-crossing partitions, the formula counts the number of non-crossing partitions with $ n $ points, and root vertex degree $ j-1 $, for $ j=1,\ldots,n+1 $. The other formulas enumerate the graphs with $ n $ vertices and root vertex degree $ j-1 $, for $ j = 1,\ldots,n-1 $.

\begin{center}
	\includegraphics[scale=0.73]{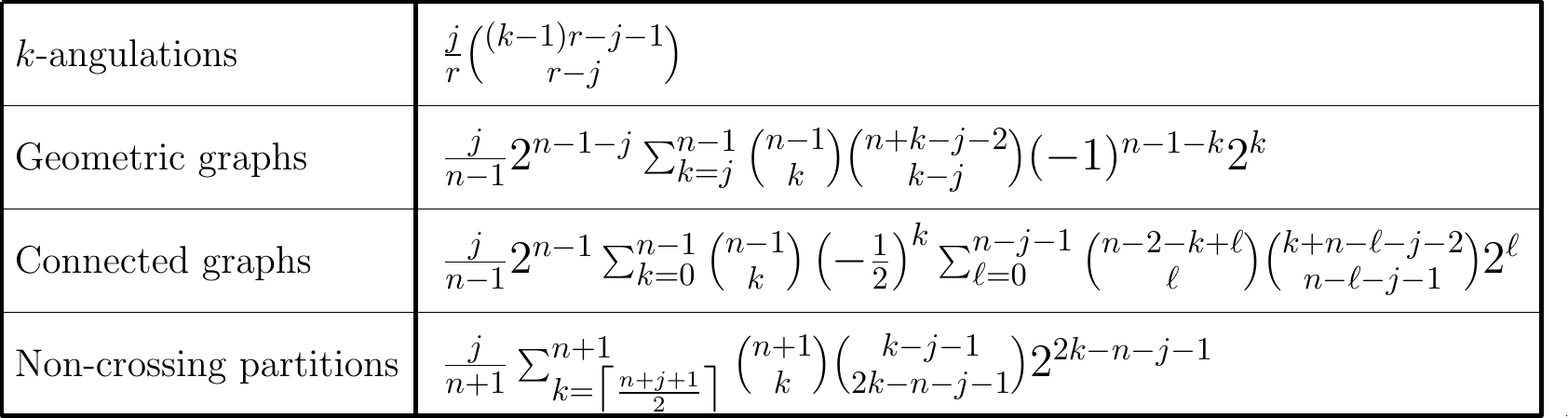}
	\captionof{table}{Entries for vectors that enumerate the number of graphs for four graph classes.}
	\label{fig:1}
\end{center}

\textbf{Characteristic polynomials}
\smallskip

We also study other properties that have combinatorial implications. We obtain the characteristic polynomials of the matrices, and a characterization of the eigenvectors associated to them. The recursive formulas obtained for the characteristic polynomials of the production matrices of the graph classes stated in Table \ref{fig:12} are summarized in Table \ref{fig:2}.

\begin{center}
	\includegraphics[scale=0.65]{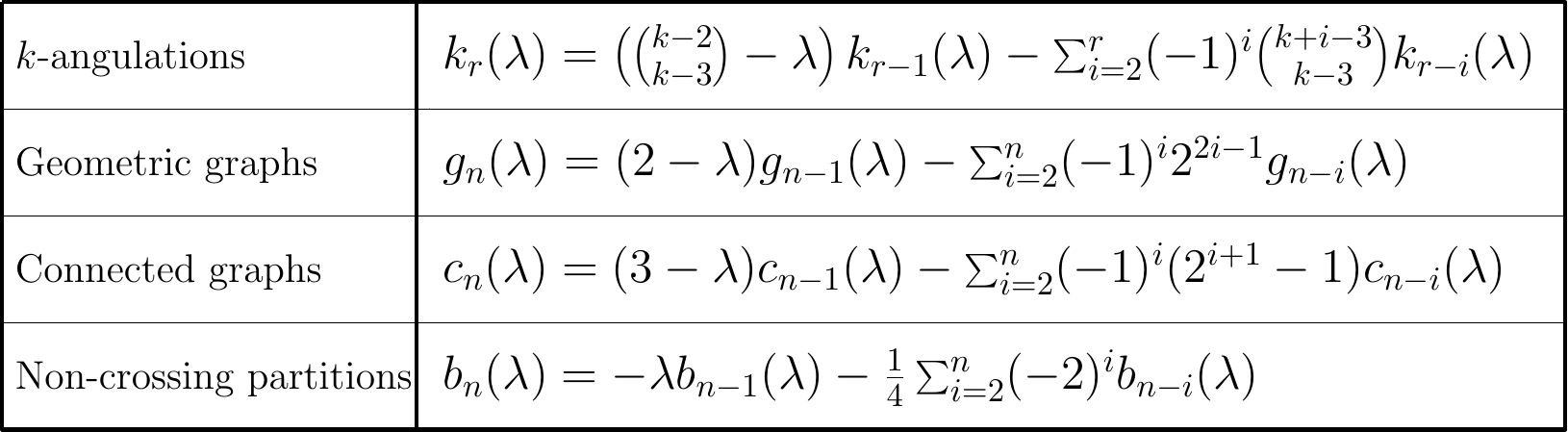}
	\captionof{table}{Recursive formulas for the characteristic polynomials for the production matrices for four graph classes.}
	\label{fig:2}
\end{center}

Finally, the solutions for the recursive formulas of the characteristic polynomials stated in Table \ref{fig:2} are given in Table \ref{fig:3}.

\begin{center}
	\includegraphics[scale=0.7]{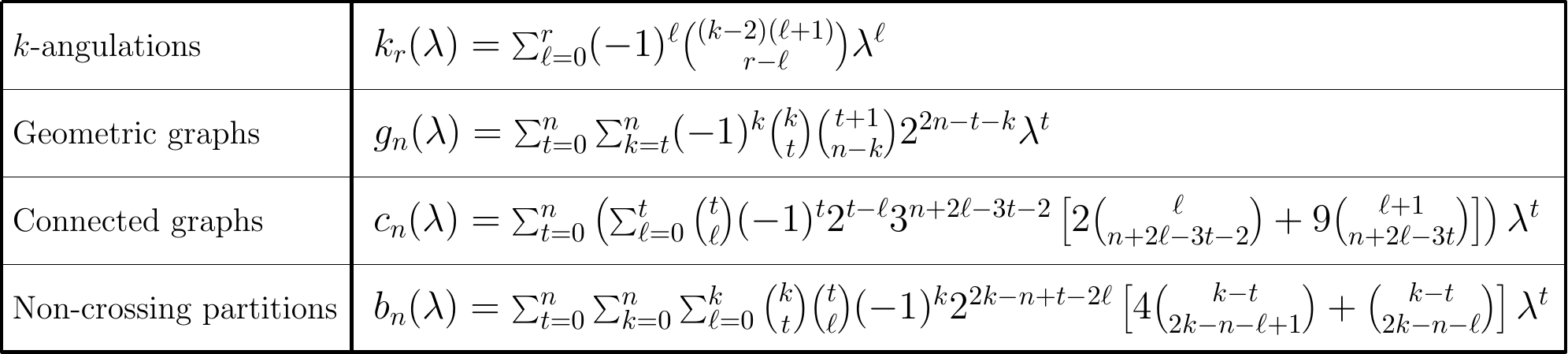}
	\captionof{table}{Characteristic polynomials for the production matrices for four graph classes.}
	\label{fig:3}
\end{center}

\section{Preliminaries}

For the sake of clarity, the concepts explained in this section are divided into two sections. In Section $ 2.1 $, we introduce the tools needed in the following chapters. It includes both the main definitions that appear in other sections, and the main theorems we use in this work. Finally, Section $ 2.2 $ is devoted to find the characteristic polynomial and the form of an eigenvector of Hessenberg-Toeplitz matrices, something that we will use in the following sections to prove formulas in Table \ref{fig:2}.

\subsection{Analytic tools}

Throughout this work, we use the concept of generating trees and some theorems related to Riordan Arrays. A generating tree is a rooted labelled tree with the property that if $ v_1 $ and $ v_2 $ are any two nodes with the same label then, for each label $ \ell $, $ v_1 $ and $ v_2 $ have exactly the same number of children with label $ \ell $. A more detailed explanation about this kind of trees can be found in the work of Merlini and Verri \textcolor{blue}{\cite{Combinatorial2}}. Moreover, for graphs on point sets in convex position, generating trees for triangulations \textcolor{blue}{\cite{Triang}} and spanning trees \textcolor{blue}{\cite{Span}} have been obtained. Generating trees have been studied in several contexts, most notably for the \emph{ECO method} \textcolor{blue}{\cite{ECO}}, and for obtaining matrix representations of diverse combinatorial objects \textcolor{blue}{\cite{Combinatorial1, Combinatorial2}}. Once these matrices are obtained, one can follow the approach used by Merlini and Verri \textcolor{blue}{\cite{Combinatorial2}}, using the theory of Riordan Arrays to analyze them.

\vspace{3.5mm}

A Riordan Array is an infinite lower triangular array $ \{d_{n, j}\}_{n,j \in \mathbb{N}} $, defined by a pair of generating functions $ (d(t), h(t)) $, such that the generic element $ d_{n, j} $ is the $ n $-th coefficient in the series $ d(t)(th(t))^j $, i.e.,
\begin{equation}
	\label{eq:4}
	d_{n, j} = [t^n]d(t)(th(t))^j, \qquad n,j \geq 0.
\end{equation}

From this definition, $ d_{n, j} = 0 $ for $ j > n $. We always assume that $ d(0) \neq 0 $. If we also have $ h(0) \neq 0 $, then the Riordan Array is said to be \textit{proper}. Let $ D = (d(t),h(t)) $ be a proper Riordan Array, let $ A = \{a_{i}\}_{i \in \mathbb{N}} $ be its A-sequence, and let $ Z = \{z_{i}\}_{i \in \mathbb{N}} $ be its Z-sequence (the Z-sequence characterizes row $ 0 $ of the Riordan Array while the A-sequence characterizes all the other rows \textcolor{blue}{\cite{Combinatorial2}}), then:
\begin{equation}
\label{eq:13}
	h(t) = A(th(t)) \qquad \qquad d(t) = \frac{d(0)}{1-tZ(th(t))},
\end{equation}

where $ A(t) $ and $ Z(t) $ are the generating functions of the A-sequence and Z-sequence, respectively.

\bigskip

One theorem that we will use, especially when determining the entries of the vector $ v^{i} $ that counts the number of elements of a certain graph class, as well as for the characteristic polynomials of their production matrices is Theorem \ref{thm:3}. Theorem \ref{thm:5} connects the concept of proper Riordan Arrays with the one of transfer matrices used by West \textcolor{blue}{\cite{West}}, pointing out the power of the Riordan Array approach.

\begin{theorem}
	\label{thm:3}
	\textbf{(Merlini and Verri \textcolor{blue}{\cite{Combinatorial2}})}
	\smallskip
	
	Let $ D = (d(t), h(t)) $ be a proper Riordan Array, and let $ A = \{a_{i}\}_{i \in \mathbb{N}} $ be its A-sequence. Then, if $ d(t) = 1 $ we have:
	
	\begin{equation*}
		d_{n,j} = \frac{j}{n}[t^{n-j}]A(t)^{n},
	\end{equation*}
	
	and if $ d(t) = h(t) $ we have:
	
	\begin{equation*}
		d_{n,j} = \frac{j+1}{n+1}[t^{n-j}]A(t)^{n+1}.
	\end{equation*}
	
\end{theorem}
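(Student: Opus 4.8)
The plan is to reduce both identities to the Lagrange Inversion Formula (LIF) by exploiting the A-sequence characterization in \eqref{eq:13}. The starting point is the substitution $w = w(t) := t\,h(t)$, which is exactly the quantity appearing inside the powers in the definition \eqref{eq:4}. From the left-hand equation in \eqref{eq:13}, namely $h(t) = A(t h(t)) = A(w)$, I obtain the functional equation $w = t\,A(w)$, which is precisely the form $w = t\,\phi(w)$ required to apply Lagrange inversion with $\phi = A$; here $\phi(0) = a_0 \neq 0$ because the array is proper. This single observation is the conceptual core of the argument, and everything else is bookkeeping with coefficient extraction.

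For the case $d(t) = 1$, definition \eqref{eq:4} gives $d_{n,j} = [t^n] w^j$. Applying the LIF in the form $[t^n]F(w) = \tfrac{1}{n}[w^{n-1}]F'(w)\,\phi(w)^n$ with $F(w) = w^j$ yields $d_{n,j} = \tfrac{j}{n}[w^{n-j}]A(w)^n$, and since the coefficient variable is a dummy this is the claimed formula $\tfrac{j}{n}[t^{n-j}]A(t)^n$.

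For the case $d(t) = h(t)$, I first rewrite $d_{n,j} = [t^n]\,h(t)\,(t h(t))^j = [t^{n-j}]\,h(t)^{j+1}$. The key manipulation is to use $h(t) = w/t$ to convert this into a pure power of $w$: since $h(t)^{j+1} = w^{j+1}/t^{j+1}$, I get $d_{n,j} = [t^{n+1}]\,w^{j+1}$. A second application of the LIF, now with exponent $j+1$ and target coefficient $n+1$, gives $d_{n,j} = \tfrac{j+1}{n+1}[w^{n-j}]A(w)^{n+1}$, which is the desired expression.

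The step I expect to require the most care is the index shift in the second case: recognizing that the extra factor $h(t)$ (compared with $d(t)=1$) amounts, after writing $h = w/t$, to bumping both the power of $w$ and the extracted coefficient by one, which is exactly what produces the $\tfrac{j+1}{n+1}$ prefactor and the $(n+1)$-st power. Verifying that $\phi(0)\neq 0$ and that all series involved are legitimate formal power series (so that the LIF applies) is routine but should be recorded; it follows from properness together with $d(0)\neq 0$ and $h(0)\neq 0$.
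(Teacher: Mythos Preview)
Your argument via the Lagrange Inversion Formula is correct. The substitution $w=t\,h(t)$ together with the $A$-sequence relation $h(t)=A(th(t))$ indeed yields the functional equation $w=t\,A(w)$, and both coefficient extractions then follow immediately from the standard LIF, with the index shift in the second case handled correctly.

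As for the comparison: the paper does \emph{not} supply its own proof of this statement. Theorem~\ref{thm:3} is quoted from Merlini and Verri~\cite{Combinatorial2} and used as a black box throughout (e.g., in the proofs of Theorems~\ref{thm:32}, \ref{thm:37}, \ref{thm:7}, and \ref{thm:2}). So there is nothing in the present paper to compare your argument against. For what it is worth, the Lagrange-inversion derivation you give is precisely the standard route to these identities and is essentially the argument one finds in the Riordan-array literature; the only minor caveat is that in the first case the formula is stated for $n\ge 1$ (the LIF step divides by $n$), while $d_{0,0}=1$ must be read separately.
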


\begin{theorem}
	\label{thm:5}
	\textbf{(Merlini and Verri \textcolor{blue}{\cite{Combinatorial2}})}
	\smallskip
	
	Let $ D = \{d_{n, j}\}_{n,j \in \mathbb{N}} $ be a proper Riordan Array defined by the triple $ (d(0), A, Z) $ and let $ T $ and $ \bar{d}_0 $ be an infinite matrix and an infinite vector defined as follows:
	\begin{equation*}
		T = \begin{pmatrix}
			z_{1} & z_{2} & z_{3} & z_4 & \cdots \\
			a_{1} & a_{2} & a_{3} & a_4 & \cdots \\
			0 & a_{1} & a_{2} & a_3 & \cdots \\
			0 & 0 & a_{1} & a_2 & \cdots \\
			\vdots & \vdots & \vdots & \vdots & \ddots \\
			\end{pmatrix}, \qquad \bar{d}_0 = \begin{pmatrix}
			d(0) \\
			0 \\
			0 \\
			0 \\
			\vdots \\
			\end{pmatrix}.
	\end{equation*}
	Then, the generic element $ d_{n,j} $ of the Riordan Array, $ n > 0 $, is the $ j $-th element of the vector $ T^n\bar{d}_0 $.
\end{theorem}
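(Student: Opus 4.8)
The plan is to show that $T$ acts literally as the production matrix of the Riordan array. Collect the $n$-th row into the infinite column vector $v^{(n)} = (d_{n,0}, d_{n,1}, d_{n,2}, \ldots)^{\top}$ (indexing components from $0$). I will argue that one multiplication by $T$ reproduces exactly the A-sequence and Z-sequence recurrences characterizing a proper Riordan array, so that $T v^{(n)} = v^{(n+1)}$; the theorem then follows by a one-line induction, since $T^n \bar d_0 = T^n v^{(0)} = v^{(n)}$ and the $j$-th component of this vector is $d_{n,j}$.

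First I would record the two production rules encoded by the triple $(d(0), A, Z)$. By the A- and Z-sequence characterization of a proper Riordan array (equivalently, the relations in \eqref{eq:13}), one has, for all $n \ge 0$ and all $j \ge 0$,
\begin{equation*}
	d_{n+1,0} = \sum_{i \ge 1} z_i\, d_{n,i-1}, \qquad d_{n+1,j+1} = \sum_{i \ge 1} a_i\, d_{n,j+i-1},
\end{equation*}
where the first identity (governed by the Z-sequence) produces column $0$ and the second (governed by the A-sequence) produces every other entry from the preceding row. Here I use the paper's convention that the sequences are indexed from $1$, so that $A(t) = \sum_{i\ge 1} a_i t^{i-1}$ and $Z(t) = \sum_{i \ge 1} z_i t^{i-1}$; this is the one place where the index bookkeeping has to be handled with care.

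Next I would check the base case and the inductive step against the entries of $T$. Indexing its rows and columns from $0$, these are $T_{0,c} = z_{c+1}$ and, for $r \ge 1$, $T_{r,c} = a_{c-r+2}$ with the convention $a_k = 0$ for $k \le 0$, which is precisely the banded Toeplitz pattern displayed in the statement. Since $d_{n,j} = [t^n]d(t)(th(t))^j = 0$ for $j > n$, every $v^{(n)}$ has finite support, so all matrix--vector products are finite sums and no convergence issue arises. For the base case, $d_{0,0} = [t^0]d(t) = d(0)$ and $d_{0,j} = 0$ for $j \ge 1$, hence $v^{(0)} = \bar d_0$. For the inductive step I would compute $(T v^{(n)})_r$ directly: the $r=0$ entry collapses to $\sum_{c \ge 0} z_{c+1} d_{n,c}$, which is the Z-recurrence and equals $d_{n+1,0}$; for $r \ge 1$, the substitution $i = c - r + 2$ turns $(T v^{(n)})_r$ into $\sum_{i \ge 1} a_i\, d_{n, r+i-2}$, which is exactly the A-recurrence with $j+1 = r$ and equals $d_{n+1,r}$. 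Thus $T v^{(n)} = v^{(n+1)}$, and induction yields $v^{(n)} = T^n \bar d_0$ for every $n \ge 0$, in particular for $n > 0$.

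The computation itself is routine; the only real obstacle is aligning the two index conventions, namely the shift between the $1$-indexed sequences $\{a_i\}$, $\{z_i\}$ read off $T$ and the standard A- and Z-sequences, together with the shift $j \mapsto j+1$ built into the A-recurrence. Getting these shifts consistent is exactly what makes the single subdiagonal structure of $T$ (and its special first row for the Z-sequence) match the production rules, and once this is set up correctly the rest is immediate.
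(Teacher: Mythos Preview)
Your argument is correct, but note that the paper does not actually prove this theorem: it is quoted verbatim from Merlini and Verri \cite{Combinatorial2} as a preliminary tool, with no proof supplied. So there is no ``paper's own proof'' to compare against here.

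That said, what you wrote is exactly the standard argument behind the result and would be accepted as a proof. The identification $v^{(0)}=\bar d_0$, the verification that the first row of $T$ encodes the Z-recurrence and the remaining (shifted Toeplitz) rows encode the A-recurrence, and the finite-support remark ensuring the infinite matrix--vector products are well defined, together give $Tv^{(n)}=v^{(n+1)}$ and hence $v^{(n)}=T^n\bar d_0$ by induction. Your handling of the index shift (the paper's sequences are $1$-indexed, so $A(t)=\sum_{i\ge 1}a_i t^{i-1}$ and $Z(t)=\sum_{i\ge 1}z_i t^{i-1}$) is consistent with the displayed form of $T$, and the substitution $i=c-r+2$ in the $r\ge 1$ case is exactly what is needed. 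Nothing is missing.
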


It follows that the entries $ d_{n,j} $ correspond to the entries of the vector $ v^{n} $ when $ T $ is a production matrix and $ d(0) $ is the non-zero entry of the initial vector $ v^c $.\smallskip

\subsection{Hessenberg-Toeplitz matrices}
\smallskip

All the production matrices we obtain are Upper Hessenberg-Toeplitz, i.e., matrices of the form:
\begin{equation*}
	A_{n} = \begin{pmatrix}
		a_{0} & a_{1} & a_{2} & \cdots & a_{n-1} \\
		a_{-1} & a_{0} & a_{1} & \cdots & a_{n-2} \\
		0 & a_{-1} & a_{0} & \cdots & a_{n-3} \\
		\vdots & \vdots & \vdots & \ddots & \vdots \\
		0 & 0 & 0 & \cdots & a_{0} \\
	\end{pmatrix}
\end{equation*}

A recursive formula for the determinant of Hessenberg-Toeplitz matrices $ A_{n} $ has already been given in \textcolor{blue}{\cite{Dale}}. Hence, we can obtain easily the following result for the characteristic polynomial of $ A_{n} $ from \textcolor{blue}{\cite[Theorem $ 4.20 $]{Dale}}.

\vspace{3.5mm}

\begin{theorem}
	\label{thm:13}
	\textbf{(Dale and Vein \textcolor{blue}{\cite{Dale}})}
	\smallskip
	
	The characteristic polynomial $ d_{n}(\lambda) $ of a Hessenberg-Toeplitz matrix $ A_{n} $, for $ n \geq 1 $, satisfies the recurrence relation
	\begin{equation*}
		d_{n}(\lambda) = (a_{0}-\lambda)d_{n-1}(\lambda) + \sum_{i=2}^{n}(-1)^{i+1}a_{i-1}a_{-1}^{i-1}d_{n-i}(\lambda),
	\end{equation*}
	where $ d_0(\lambda) = 1 $.
\end{theorem}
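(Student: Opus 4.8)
The plan is to reduce the statement to the determinant recurrence for Hessenberg–Toeplitz matrices established by Dale and Vein, applied to the shifted matrix $A_{n}-\lambda I$. The first observation is that $A_{n}-\lambda I$ is itself upper Hessenberg–Toeplitz: subtracting $\lambda I$ only changes the diagonal symbol from $a_{0}$ to $a_{0}-\lambda$, while the subdiagonal entry $a_{-1}$ and the superdiagonal symbols $a_{1},a_{2},\ldots$ are untouched. Consequently $d_{n}(\lambda)=\det(A_{n}-\lambda I)$ is precisely the determinant of a Hessenberg–Toeplitz matrix to which \cite[Theorem $4.20$]{Dale} applies verbatim, with $a_{0}$ replaced throughout by $a_{0}-\lambda$.

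Next I would make explicit why the smaller determinants that appear are again characteristic polynomials. The determinant recurrence for such a matrix is obtained by a cofactor expansion along the last column: to reach the entry in row $k$ and column $n$ one must traverse the $n-k$ subdiagonal entries lying below it, each equal to $a_{-1}$, and the complementary minor splits as a block-triangular product of a leading principal submatrix of order $k-1$ and a triangular block carrying the factor $a_{-1}^{\,n-k}$ together with the sign $(-1)^{n-k}$. Because of the Toeplitz structure, the leading principal submatrix of order $k-1$ of $A_{n}-\lambda I$ is exactly $A_{k-1}-\lambda I$, whose determinant is $d_{k-1}(\lambda)$; the top entry $a_{0}-\lambda$ in the last column yields the leading term $(a_{0}-\lambda)d_{n-1}(\lambda)$.

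It then remains to reindex the sum. Writing $i=n-k+1$, the contribution of the entry in row $k$ becomes $(-1)^{i-1}a_{i-1}a_{-1}^{\,i-1}d_{n-i}(\lambda)$, and since $(-1)^{i-1}=(-1)^{i+1}$, summing over $k=1,\ldots,n-1$ (equivalently $i=2,\ldots,n$) reproduces
\begin{equation*}
	d_{n}(\lambda) = (a_{0}-\lambda)d_{n-1}(\lambda) + \sum_{i=2}^{n}(-1)^{i+1}a_{i-1}a_{-1}^{i-1}d_{n-i}(\lambda),
\end{equation*}
with the base case $d_{0}(\lambda)=1$ coming from the empty determinant and the single check $d_{1}(\lambda)=a_{0}-\lambda$ being immediate.

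The step I expect to be most delicate is the bookkeeping of signs and powers of $a_{-1}$ when translating Dale and Vein's formulation into the characteristic-polynomial indexing: one must verify that the exponent of $a_{-1}$ attached to the superdiagonal symbol $a_{i-1}$ is exactly $i-1$ and that the alternating sign is $(-1)^{i+1}$ rather than an off-by-one variant. Checking the formula against a small case, say $n=2$ or $n=3$ where the determinant can be expanded by hand, is the cleanest way to pin down these constants and confirm the exact match with \cite[Theorem $4.20$]{Dale}.
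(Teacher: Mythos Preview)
Your proposal is correct and matches the paper's approach: the paper does not give a detailed proof but simply observes that $A_n-\lambda I$ is again Hessenberg--Toeplitz (with diagonal symbol $a_0-\lambda$) and invokes \cite[Theorem~4.20]{Dale} directly, which is exactly your first paragraph. Your additional cofactor expansion along the last column is extra detail that effectively re-derives the Dale--Vein recurrence, and the block-triangular analysis and reindexing you describe are all correct.
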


\bigskip

In the following, we prove a formula for the entries of an eigenvector associated to an eigenvalue $ \lambda $ of a Hessenberg-Toeplitz matrix that will be useful to find formulas for the eigenvectors of the production matrices for the classes of graphs we study.

\begin{theorem}
	\label{thm:14}
	Let {\small $ x=(x_{n-1},x_{n-2},\ldots,x_{0})^{\top} $} be an eigenvector associated to an eigenvalue $ \lambda $ of a Hessenberg-Toeplitz matrix $ A_{n} $, for $ n \geq 2 $. Then, the entries of the vector $ x $ are of the form:
	
	\begin{equation*}
		x_{i} = \left(\frac{-1}{a_{-1}}\right)^{i}d_{i}(\lambda)x_{0}
	\end{equation*}
	
	$ \forall i=1,\ldots, n-1 $, where $ d_{i}(\lambda) $ is the characteristic polynomial of $ A_{i} $.
\end{theorem}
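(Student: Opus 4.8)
The plan is to write out the eigenvector equation $A_n x = \lambda x$ one row at a time, exploit the Toeplitz--Hessenberg shape to turn it into a recurrence for the entries $x_i$, and then match that recurrence against the characteristic-polynomial recurrence of Theorem~\ref{thm:13}. Reading off the $m$-th row of $A_n$, whose entry in column $j$ is $a_{j-m}$ (and $0$ once $j < m-1$), and recalling that the $m$-th component of $x$ is $x_{n-m}$, the eigenrelation for the rows $m = 2,\ldots,n$ becomes, after setting $i = n-m$,
\begin{equation*}
a_{-1}x_{i+1} + \sum_{\ell=0}^{i} a_{i-\ell}\,x_\ell = \lambda x_i, \qquad i = 0,1,\ldots,n-2.
\end{equation*}
Since $a_{-1}\neq 0$ for these matrices, I can solve for $x_{i+1}$ to obtain the one-step recurrence
\begin{equation*}
x_{i+1} = \frac{1}{a_{-1}}\left[(\lambda - a_0)x_i - \sum_{\ell=0}^{i-1} a_{i-\ell}\,x_\ell\right],
\end{equation*}
which determines $x_1,\ldots,x_{n-1}$ from $x_0$. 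The top row $m=1$ (that is, $i=n-1$) carries no $a_{-1}$ term; it yields only the extra relation $\sum_{\ell=0}^{n-1} a_{n-1-\ell}x_\ell = \lambda x_{n-1}$, which upon substituting the claimed formula collapses to $d_n(\lambda)=0$, the characteristic equation, and so is automatically satisfied because $\lambda$ is an eigenvalue. Hence it plays no role in establishing the formula and may be set aside.

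I would then prove $x_i = \left(\tfrac{-1}{a_{-1}}\right)^i d_i(\lambda)x_0$ by induction on $i$. The base cases are immediate: $d_0(\lambda)=1$ gives $x_0=x_0$, and evaluating the recurrence at $i=0$ with $d_1(\lambda)=a_0-\lambda$ gives $x_1 = \tfrac{\lambda-a_0}{a_{-1}}x_0 = \left(\tfrac{-1}{a_{-1}}\right)d_1(\lambda)x_0$. For the inductive step, I assume the formula holds for all indices up to $i$ and substitute it into the recurrence for $x_{i+1}$. After factoring out $\left(\tfrac{-1}{a_{-1}}\right)^{i+1}x_0$, proving the claim reduces to verifying the polynomial identity
\begin{equation*}
d_{i+1}(\lambda) = (a_0-\lambda)d_i(\lambda) + \sum_{\ell=0}^{i-1} (-1)^{i+\ell}\,a_{i-\ell}\,a_{-1}^{\,i-\ell}\,d_\ell(\lambda).
\end{equation*}

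The crux of the argument, and the step I expect to require the most care, is recognizing this identity as exactly the Dale--Vein recurrence of Theorem~\ref{thm:13} with $n$ replaced by $i+1$. That recurrence reads $d_{i+1}(\lambda) = (a_0-\lambda)d_i(\lambda) + \sum_{k=2}^{i+1}(-1)^{k+1}a_{k-1}a_{-1}^{\,k-1}d_{i+1-k}(\lambda)$, and the substitution $\ell = i+1-k$ maps its summation range $k=2,\ldots,i+1$ onto $\ell=0,\ldots,i-1$, sending $a_{k-1}\mapsto a_{i-\ell}$, $a_{-1}^{\,k-1}\mapsto a_{-1}^{\,i-\ell}$, $d_{i+1-k}\mapsto d_\ell$, and $(-1)^{k+1}\mapsto (-1)^{i+\ell}$. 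The main obstacle is thus essentially bookkeeping: keeping the signs $(-1)^{i+\ell}$ and the powers of $a_{-1}$ aligned through the change of summation variable, and confirming the boundary of the sum lands correctly. Once the two sums are seen to coincide, the induction closes and the formula holds for every $i=1,\ldots,n-1$.
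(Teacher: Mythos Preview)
Your proposal is correct and follows essentially the same approach as the paper: both arguments read off the rows of $A_n x = \lambda x$ to obtain a recurrence for the $x_i$, prove the claimed formula by induction on $i$, identify the inductive step with the Dale--Vein recurrence of Theorem~\ref{thm:13}, and dispose of the top row by noting it reduces to $d_n(\lambda)=0$. Your version is slightly more explicit in deriving the general one-step recurrence for all $i$ at once, whereas the paper works directly with the second row and $x_{n-1}$, but the underlying logic is identical.
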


\begin{proof}
	We will prove the theorem by induction on the $ i $-th entry of the vector and taking into account that $ A_{n}x = \lambda x $, where $ \lambda $ is the eigenvalue associated to an eigenvector $ x $.
	
	\vspace{3.5mm}
	
	\textbf{Inductive hypothesis}: The base case for $ i=1 $ is easily verified, so we will suppose that 
	
	\vspace{3.5mm}
	\begin{equation*}
		x_{i} = \left(\frac{-1}{a_{-1}}\right)^{i}d_{i}(\lambda)x_{0}, \text{ \qquad for all } i < n-2.
	\end{equation*}
	
	\textbf{Inductive step}: if we multiply the second row of the production matrix $ A_n $ by the eigenvector $ x $ and we make it equal it to $ \lambda x_{n-2} $, we get:
	\begin{equation*}
		a_{-1}x_{n-1} = (\lambda - a_{0})x_{n-2} -a_{1}x_{n-3} -a_{2}x_{n-4} -\ldots -a_{n-2}x_{0}.\\
	\end{equation*}
	
	Then, by induction hypothesis:
	\begin{multline*}
		a_{-1}x_{n-1} = \left[(\lambda - a_{0})\frac{(-1)^{n-2}}{a_{-1}^{n-2}}d_{n-2}(\lambda) - a_{1}\frac{(-1)^{n-3}}{a_{-1}^{n-3}}d_{n-3}(\lambda) - \ldots-a_{n-2}\right]x_{0} \Longrightarrow \\
		\Longrightarrow x_{n-1} = \frac{(-1)^{n-1}}{a_{-1}^{n-1}}\left[(a_{0}-\lambda)d_{n-2}(\lambda) + \sum_{i=2}^{n-1} (-1)^{i+1}a_{i-1}a_{-1}^{i-1}d_{n-i-1}(\lambda)\right]x_{0}.
	\end{multline*}
	
	And by Theorem \ref{thm:13}, this is equal to $ \frac{(-1)^{n-1}}{a_{-1}^{n-1}}d_{n-1}(\lambda)x_{0} $. So the last part of the proof consists in developing the first row of the system of equations $ A_{n}x = \lambda x $ given by: 
	\begin{equation*}
		a_{0}x_{n-1} + a_{1}x_{n-2} + \ldots + a_{n-1}x_{0} = \lambda x_{n-1} \ \Longrightarrow \ (a_{0} - \lambda)x_{n-1} + a_{1}x_{n-2} + \ldots + a_{n-1}x_{0} = 0.
	\end{equation*}
	
	Now, if we substitute each $ x_{i} $ by its value, we get that:
	\begin{multline*}
		\left[(a_{0}-\lambda)\frac{(-1)^{n-1}}{a_{-1}^{n-1}}d_{n-1}(\lambda)+a_{1}\frac{(-1)^{n-2}}{a_{-1}^{n-2}}d_{n-2}(\lambda)+a_{2}\frac{(-1)^{n-3}}{a_{-1}^{n-3}}d_{n-3}(\lambda) +\ldots+a_{n-1}\right]x_{0} = \\
		= \frac{(-1)^{n-1}}{a_{-1}^{n-1}}\left[(a_{0}-\lambda)d_{n-1}(\lambda) +\sum_{i=2}^{n} (-1)^{i+1}a_{i-1}a_{-1}^{i-1}d_{n-i}(\lambda)\right]x_{0} = \frac{(-1)^{n-1}}{a_{-1}^{n-1}}d_{n}(\lambda)x_{0}.
	\end{multline*} 
	
	And this is equal to $ 0 $ if $ d_{n}(\lambda) = 0 $, but this is true because $ d_{n}(\lambda) $ is the characteristic polynomial of $ A_{n} $.
\end{proof}

\section{Counting \texorpdfstring{$ k $-angulations}{}}

In this section we introduce the framework of production matrices and generating trees with our first graph class, the $ k $-angulations. A $ k $-angulation is a 2-connected plane graph in which every internal face is a $ k $-gon. In this section we count the number of $ k $-angulations that consist of exactly $ r $ $ k $-gons. Such a graph has $ n=(k-2)r+2 $ vertices.

For $ k = 3 $ we fall into the well-known case of triangulations. The number of triangulations of a set of $ n + 2 $ points in convex position is the Catalan number $ C_{n} = \binom{2n}{n}\frac{1}{n+1} $.
In general, the number of $ k $-angulations with $ r $ $ k $-gons is given by $ \frac{1}{(k-2)r+1}{\binom{(k-1)r}{r}} $. The way of calculating the entries of the production matrices of $ k $-angulations is similar to the mapping used by Hurtado and Noy \textcolor{blue}{\cite{Triang}} to obtain a generating tree for triangulations. We use as degree of the root vertex the number of its incident edges, minus 2. We substract 2 because the two edges incident to the root vertex that are incident to the outer face are part of every $ k $-angulation and hence, they can be disregarded. Vector $ v^{r} $ counts $ k $-angulations with $ r $ $ k $-gons, and $ (k-2)r+2 $ vertices. Once we have the $ r \times r $ production matrix of $ k $-angulations $ K_{r} $ we apply the methods of Section $ 2.1 $ to get a formula for the number $ v^{r}_{j} $ of $ k $-angulations with $ r $ $ k $-gons and with root vertex of degree $ j-1 $, given by the $ j $-th entry of $ v^{r} $.

\subsection{Production matrix}

Our production matrix is devised by using a surjective mapping of $ k $-angulations with $ (r+1) $ $ k $-gons to $ k $-angulations with $ r $ $ k $-gons. Let $ \sigma $ be a $ k $-angulation with point set $ \{p_{1}, \ldots, p_{(k-2)r+2}\} $ labelled counter-clockwise, where the vertex $ p_{(k-2)r+2} $ is the root vertex and its degree is defined as the number of its incident edges, minus $ 2 $. Then, the entries of $ v^{r} $ will depend on the degree of the root vertex $ p_{(k-2)r+2} $ of $ \sigma $ in the mapping.

\vspace{3.5mm}

The mapping is based on a graph operation called \textit{edge flipping}. For the sake of simplicity, we will explain the case for $ k = 4 $, the remaining cases are analogous. Suppose that we have two quadrilateral faces $ p_1p_2p_3p_4 $ and $ p_1p_4p_5p_6 $ adjacent to edge $ p_{1}p_{4} $ (see Figure \ref{fig:23}(b)). Edge flipping is an operation replacing the diagonal $ p_{1}p_{4} $ with $ p_{3}p_{6} $ (Figure \ref{fig:23}(a)), or with $ p_{2}p_{5} $ (Figure \ref{fig:23}(c)). If a diagonal flip yields multiple edges or loops, then we do not apply it. This operation clearly transforms a quadrangulation into another one.

\begin{center}
	\includegraphics[scale=0.75]{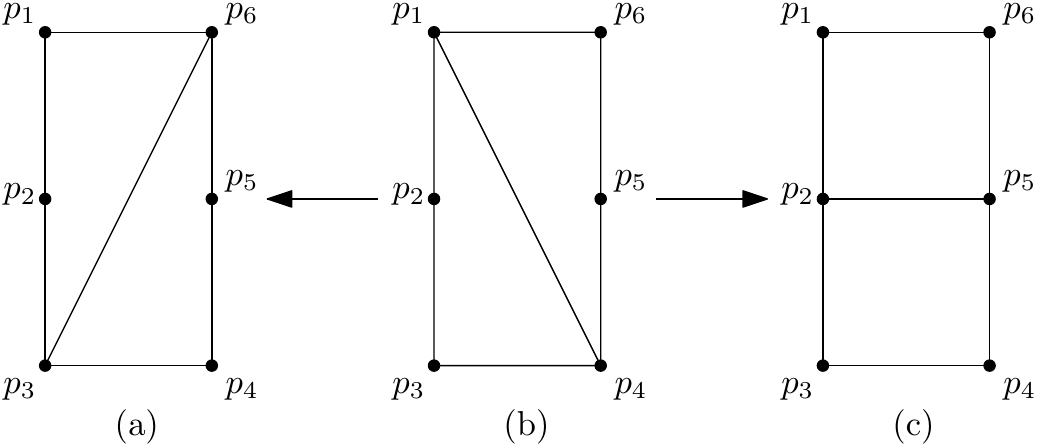}
	\captionof{figure}{The diagonal flip.}
	\label{fig:23}
\end{center}

Once we have defined the tool needed to define the mapping, let us apply it to $ k $-angulations. We define the generating tree of $ k $-angulations by using the following mapping between the set $ \mathfrak{Q}_{r+1} $ of $ k $-angulations with $ (r+1) $ $ k $-gons and $ \mathfrak{Q}_{r} $. Let $ d(p_{i}) $ be the degree of vertex $ p_{i} $ in a $ k $-angulation $ \sigma' \in \mathfrak{Q}_{r+1} $ ignoring the boundary edges. We can obtain a $ k $-angulation $ \sigma \in \mathfrak{Q}_{r} $ from $ \sigma' $ by following this procedure: if $ \sum_{i=3}^{k}d(p_{(k-2)r+i}) = 0 $ (see Figure \ref{fig:8}(c) where $ k=4 $ and vertices $ p_{2r+3} $ and $ p_{2r+4} $ have degree $ 0 $), we just delete vertices $ p_{(k-2)r+3}, \ldots, p_{(k-2)r+k} $. Otherwise, we flip all the edges incident to those vertices so that they become incident to $ p_{(k-2)r+2} $, in order to have no edges incident to $ p_{(k-2)r+3}, \ldots, p_{(k-2)r+k} $; then the $ k $-angulation contains the face with vertices $ \{p_1,p_{(k-2)r+2}, \ldots, p_{(k-2)r+k}\} $ and we can delete this face (see Figure \ref{fig:8}(d)). We call $ \sigma' $ the \textit{child} of $ \sigma $ and $ \sigma $ the \textit{parent} of $ \sigma' $.

\begin{center}
	\includegraphics[scale=0.8]{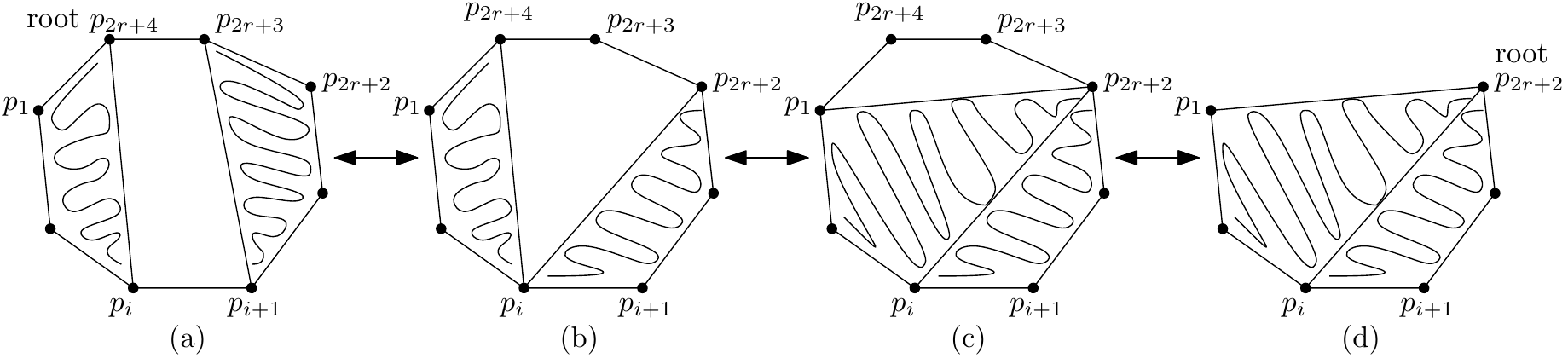}
	\captionof{figure}{Construction of a child $ \sigma' $ (a) of a $ 4 $-angulation $ \sigma $ (d).}
	\label{fig:8}
\end{center}

\bigskip

In the opposite direction, observe that the number of children of $ \sigma $ depends on the vertex degree of $ p_{(k-2)r+2} $, and that a child can always be obtained by adding $ k-2 $ vertices between $ p_{(k-2)r+2} $ and $ p_{1} $ in convex position and flipping a subset of edges incident to $ p_{(k-2)r+2} $, such that they are incident to some of the new vertices (refer again to Figure \ref{fig:8}).

\vspace{3.5mm}

Observe that, by this procedure, each $ k $-angulation is generated exactly once. Once we know how to derive $ \mathfrak{Q}_{r+1} $ from $ \mathfrak{Q}_{r} $, we are able to produce the generating tree of $ k $-angulations, see Figure \ref{fig:5} for the first levels of the generating tree of quadrangulations.

\begin{center}
	\includegraphics[scale=0.7]{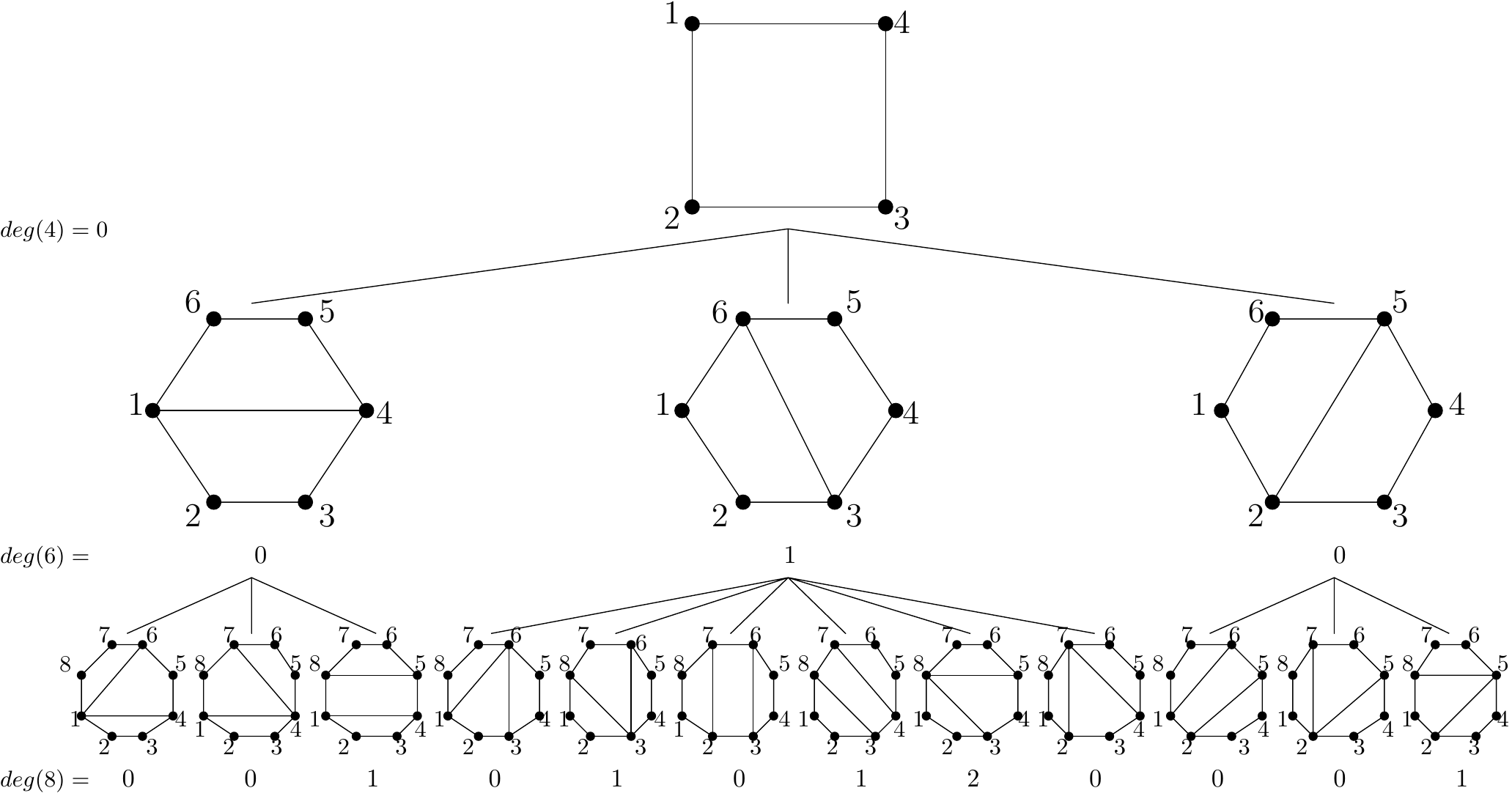}
	\captionof{figure}{First levels of the generating tree of quadrangulations. The degree of the root vertex is given below the quadrangulation.}
	\label{fig:5}
\end{center}

\bigskip

Considering $ v^{i} $ as an $ r $-dimensional vector with $ i \leq r $, we show that $ v^{i+1} $ can be obtained by multiplying $ v^{i} $ by a production matrix $ K_{r} $, that is, $ v^{i+1} = K_{r}v^{i} $.

\begin{theorem}
	The following $ r \times r $ matrix is a production matrix for $ k $-angulations of point sets in convex position.
	
	\begin{equation*}
		K_{r} = \begin{pmatrix}
			\binom{k-2}{k-3} & \binom{k-1}{k-3} & \binom{k}{k-3} & \binom{k+1}{k-3} & \binom{k+2}{k-3} & \cdots & \binom{r+k-3}{k-3} \\
			1 & \binom{k-2}{k-3} & \binom{k-1}{k-3} & \binom{k}{k-3} & \binom{k+1}{k-3} & \cdots & \binom{r+k-4}{k-3} \\
			0 & 1 & \binom{k-2}{k-3} & \binom{k-1}{k-3} & \binom{k}{k-3} & \cdots & \binom{r+k-5}{k-3} \\
			0 & 0 & 1 & \binom{k-2}{k-3} & \binom{k-1}{k-3} & \cdots & \binom{r+k-6}{k-3} \\
			0 & 0 & 0 & 1 & \binom{k-2}{k-3} & \cdots & \binom{r+k-7}{k-3} \\
			\vdots & \vdots & \vdots & \vdots & \vdots & \ddots & \vdots \\
			0 & 0 & 0 & 0 & 0 & \cdots & \binom{k-2}{k-3}
		\end{pmatrix}.
	\end{equation*}	
\end{theorem}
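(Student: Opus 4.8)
The plan is to verify directly that $K_r$ realises the generating tree constructed before the theorem, i.e. that each entry of $K_r$ counts children of a fixed parent. Writing the root degree as the index shifted by one, the entry $(K_r)_{a,b}$ must equal the number of children of root degree $a-1$ produced by a single parent of root degree $b-1$; since the construction generates each $k$-angulation exactly once, summing over all parents then gives $v^{i+1}=K_r v^i$. As $K_r$ is Upper Hessenberg-Toeplitz with $a_\delta=\binom{k-2+\delta}{k-3}$ (and the sub-diagonal ones recovered as $a_{-1}=\binom{k-3}{k-3}=1$), it suffices to prove the single counting statement: a parent $\sigma$ with root degree $d$ has exactly $\binom{k-2+d-c}{k-3}$ children of root degree $c$ for every $0\le c\le d+1$, and none of larger degree. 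Here the relevant Toeplitz offset is $b-a=d-c$, so $(K_r)_{c+1,d+1}=a_{d-c}$ is precisely the claimed count.

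To prove this statement I would parametrise the children of $\sigma$. Write $N=(k-2)r+2$; a child is built by inserting the $k-2$ new vertices $p_{N+1},\dots,p_{N'}$ (with $N'=N+k-2$ the new root) between $p_N$ and $p_1$ in convex position, adjoining the new $k$-gon along the edge $p_1p_N$, and flipping a subset of the $d$ diagonals at $p_N$ so that they become incident to the new vertices. The key step is a bijection between the children of $\sigma$ and the nonnegative integer tuples $(c;e_1,\dots,e_{k-3})$ satisfying $c+e_1+\cdots+e_{k-3}\le d+1$, where $c$ is recorded as the degree of the new root $p_{N'}$ and $e_i$ as the degree of the $i$-th non-root new vertex. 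The budget $d+1$ reflects the $d+1$ faces $F_0,\dots,F_d$ of the fan at $p_N$ together with one extra unit of freedom produced by the new $k$-gon: a child consumes part of this budget according to how deeply its new vertices cut into $F_0,\dots,F_d$, with the flips redistributing the old diagonals and the single new diagonal among $p_{N+1},\dots,p_{N'}$.

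Granting the bijection, the count is immediate by stars and bars: fixing the first coordinate $c$, the admissible tuples $(e_1,\dots,e_{k-3})$ with $e_1+\cdots+e_{k-3}\le d+1-c$ are the weak compositions of $d+1-c$ into $k-2$ parts (one part being the slack), numbering $\binom{(d+1-c)+(k-2)-1}{(k-2)-1}=\binom{k-2+d-c}{k-3}$. This is exactly $a_{d-c}$; in particular $c=d+1$ forces all $e_i=0$ and yields the single sub-diagonal child, matching $a_{-1}=1$. Summing over the disjoint fibres of the construction over the parents counted by $v^i$ then gives $v^{i+1}=K_r v^i$. I would also run the small cases as consistency checks: $k=3$ makes every $e_i$ vacuous, so each parent of degree $d$ has one child of each degree $0,\dots,d+1$, recovering the Catalan production matrix of Table \ref{fig:11}(a); and $k=4$ gives $d-c+2$ children of degree $c$, reproducing the quadrangulation matrix.

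The main obstacle is establishing the bijection rigorously, since both directions require care with planarity and the $k$-gon face condition. From a tuple to a child, one must show that the prescribed distribution of diagonals among the new vertices is always achievable by a legal sequence of diagonal flips yielding a genuine $k$-angulation with exactly the prescribed new-vertex degrees, with no multiple edges or loops created and every internal face a $k$-gon. Conversely, one must show that collapsing a child through the flips of the reverse map returns precisely $\sigma$, that distinct children give distinct tuples, and that the resulting tuple always obeys $c+\sum_i e_i\le d+1$ with equality only in the extremal configuration. The delicate point is tracking how a diagonal incident to several of the new vertices is flipped back down to $p_N$, and confirming that the new root degree $c$ is read off unambiguously; once the budget $d+1$ and the realisability of every admissible tuple are pinned down, the theorem follows.
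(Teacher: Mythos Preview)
Your approach is essentially the same as the paper's: both parametrise the children of a parent of root degree $d$ (the paper's $t$) by how the $d+1$ flippable diagonals at the old root are redistributed among the old root, the $k-3$ non-root new vertices, and the new root, and both finish with the stars-and-bars count $\binom{k-2+d-c}{k-3}$ for the number of children of new root degree $c$. The paper phrases this as ``distributing $t+1$ balls into $k-2$ boxes'' after fixing the $m$ edges sent to the new root, which is exactly your tuple $(c;e_1,\dots,e_{k-3})$ with slack; the bijection you worry about in your last paragraph is handled in the paper simply by appeal to the generating-tree construction given just before the theorem, so your concerns about legality of flips are legitimate but already absorbed into that earlier discussion.
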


\begin{proof}
	We want to produce the number of graphs with $ r+1 $ $ k $-gons with root vertex of degree $ j-1 $, from $ v^{r} $. An entry of $ v^{r+1} $ is obtained by summing the number of children with a given degree, obtained from the graphs counted in $ v^{r} $, which corresponds to multiplication of a row of $ K_r $ with $ v^r $. Assume we are given a $ k $-angulation $ \sigma \in \mathfrak{Q}_{r} $, and the root vertex $ p_{(k-2)r+2} $ has degree $ t $ (i.e., $ t+2 $ incident edges) in $ \sigma $. First, add $ k-2 $ new vertices $ p_{(k-2)r+3}, \ldots, p_{(k-2)r+k} $ to the $ k $-angulation by creating a new face $ p_{1}, p_{(k-2)r+2}, p_{(k-2)r+3}, $ $ \ldots, p_{(k-2)r+k}p_{1} $. This gives a $ k $-angulation $ \sigma' \in \mathfrak{Q}_{r+1} $. From the definition of the generating tree of $ k $-angulations, and the relation $ v^{i+1} = K_{r}v^{i} $, we can determine the entries of $ K_{r} $ as follows:  
	
	\begin{itemize}
		\item \textbf{First row}. If we want $ d(p_{(k-2)r+k}) = 0 $ in $ \sigma' $, we have the possibility of flippling some of the $ t+1 $ edges incident to $ p_{(k-2)r+2} $ in $ \sigma' $, except the ones joining it with $ p_{(k-2)r+1} $ and $ p_{(k-2)r+3} $, such that they are then incident to $ p_{(k-2)r+3}, \ldots, p_{(k-2)r+k-1} $, i.e., replacing an edge $ p_{(k-2)r+2}p_{i} $ by one of the edges $ p_{(k-2)r+2+j}p_{i+j} $ for all $ 1 \leq j \leq k-3 $, obtaining $ \binom{t+k-2}{k-3} $ $ k $-angulations. This can be seen as the number of ways of distributing $ t+1 $ indistinguishable balls into $ k-2 $ identical boxes.
		
		\item \textbf{Other rows}. The following rows are analogous, shifted by one column every time: if we want $ d(p_{(k-2)r+k}) = m, \ m \leq t+1 $, we have to flip $ m $ edges from $ p_{(k-2)r+2} $ to $ p_{(k-2)r+k} $, and the remaining $ t+1-m $ edges of $ p_{(k-2)r+2} $, we can leave them incident to $ p_{(k-2)r+2} $ or flip a subset of them from $ p_{(k-2)r+2} $ to the vertices $ p_{(k-2)r+3}, \ldots, p_{(k-2)r+k-1} $, obtaining $ \binom{t-m+k-2}{k-3} $ $ k $-angulations, i.e., the number of ways of connecting $ t-m+1 $ edges with $ k-2 $ vertices. This leads us to $ K_{r} $. \qedhere
	\end{itemize}

\end{proof}

\bigskip

As we can see, the elements of each row of $ K_{r} $ correspond to the $ (k-2) $-th diagonal of the Pascal triangle.

\vspace{3.5mm}

Next, we study the number of $ k $-angulations with $ r $ $ k $-gons with root vertex of degree $ j-1 $.

\begin{theorem}
	\label{thm:32}
	The number of $ k $-angulations with $ r $ $k $-gons where the root vertex has degree $ j-1 $ $ \forall j=1,\ldots,r $ is:
	\begin{equation*}
		v_{j}^{r} = \frac{j}{r}\binom{(k-1)r-j-1}{r-j}.
	\end{equation*}
\end{theorem}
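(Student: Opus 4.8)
The plan is to recognize the production matrix $K_{r}$ as the transfer matrix $T$ of a proper Riordan Array and then invoke Theorems~\ref{thm:5} and~\ref{thm:3}. First I would read off the $A$- and $Z$-sequences directly by comparing $K_{r}$ with the matrix $T$ of Theorem~\ref{thm:5}. The second row together with the shifted lower part gives the $A$-sequence $a_{i} = \binom{k-4+i}{k-3}$ (so $a_{1} = 1$, $a_{2} = \binom{k-2}{k-3}$, and so on), while the first row gives the $Z$-sequence $z_{i} = \binom{k-3+i}{k-3}$. Summing the corresponding generating function,
\[
A(t) = \sum_{i \geq 1}\binom{k-4+i}{k-3}t^{i-1} = \sum_{m \geq 0}\binom{k-3+m}{k-3}t^{m} = \frac{1}{(1-t)^{k-2}},
\]
which identifies the $A$-sequence with the $(k-2)$-th diagonal of Pascal's triangle, matching the remark that immediately follows the production-matrix theorem.

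The crucial structural observation is that $z_{i} = a_{i+1}$ for every $i \geq 1$, which is equivalent to $A(t) = 1 + tZ(t)$. Using the defining relations~\eqref{eq:13} and the fact that the initial vector is $v^{1} = (1,0,\ldots,0)^{\top}$, so that $d(0) = 1$, I would then show $d(t) = h(t)$: from $h(t) = A(th(t))$ one obtains $tZ(th(t)) = h(t)^{-1}\bigl(h(t)-1\bigr) = 1 - h(t)^{-1}$, whence $d(t) = d(0)/\bigl(1 - tZ(th(t))\bigr) = h(t)$. Since $h(0) = A(0) = a_{1} = 1 \neq 0$ the array is proper, so both theorems apply, and we land in the second case of Theorem~\ref{thm:3}, namely $d_{n,\ell} = \frac{\ell+1}{n+1}[t^{n-\ell}]A(t)^{n+1}$.

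It then remains to match indices and extract a single coefficient. Because $v^{r} = K_{r}^{\,r-1}v^{1} = T^{\,r-1}\bar{d}_{0}$, Theorem~\ref{thm:5} identifies the $j$-th entry $v_{j}^{r}$ with the Riordan entry $d_{r-1,\,j-1}$. Setting $n = r-1$ and $\ell = j-1$ in the formula above gives $v_{j}^{r} = \frac{j}{r}[t^{r-j}]A(t)^{r} = \frac{j}{r}[t^{r-j}](1-t)^{-(k-2)r}$, and the negative binomial expansion yields
\[
[t^{r-j}](1-t)^{-(k-2)r} = \binom{(r-j)+(k-2)r-1}{r-j} = \binom{(k-1)r-j-1}{r-j},
\]
so that $v_{j}^{r} = \frac{j}{r}\binom{(k-1)r-j-1}{r-j}$, as claimed.

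I expect the only genuinely delicate points to be establishing $d(t) = h(t)$ from the identity $z_{i} = a_{i+1}$ and getting the off-by-one index correspondence between the counting vectors $v^{r}$ and the Riordan entries $d_{n,j}$ exactly right; once these are settled the argument reduces to a routine generating-function manipulation and one coefficient extraction. As a sanity check I would verify the triangulation case $k = 3$, where $A(t) = (1-t)^{-1}$ and the formula collapses to the Ballot/Catalan numbers.
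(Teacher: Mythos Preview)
Your proposal is correct and follows essentially the same route as the paper: identify the $A$- and $Z$-sequences of $K_r$, show $d(t)=h(t)$, invoke Theorem~\ref{thm:3} (second case) together with Theorem~\ref{thm:5}, and extract the coefficient of $t^{r-j}$ in $(1-t)^{-(k-2)r}$. The one place where you differ from the paper is the derivation of $d(t)=h(t)$: you observe structurally that $z_i=a_{i+1}$, i.e.\ $A(t)=1+tZ(t)$, and deduce $tZ(th(t))=1-h(t)^{-1}$ directly, whereas the paper computes $d(t)$ explicitly and simplifies via the substitution $\omega=th(t)$; your argument is shorter and more transparent, but the two are equivalent.
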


\begin{proof}
	
The production matrix $ K_{r} $ has a corresponding Riordan Array. Using notation as in Section $ 2.1 $, its A-sequence is $ \{1, \binom{k-2}{k-3}, \binom{k-1}{k-3}, \binom{k}{k-3}, \binom{k+1}{k-3}, \ldots\} $, with generating function $ A(t) = \frac{1}{(1-t)^{k-2}} $, and its Z-sequence is $ \{\binom{k-2}{k-3}, \binom{k-1}{k-3}, \binom{k}{k-3}, \binom{k+1}{k-3}, \binom{k+2}{k-3}, \ldots\} $, with generating function $ Z(t) = \frac{1}{t(1-t)^{k-2}} - \frac{1}{t} $. We have $ d(0) = v_1^1 = 1 $ and by formula (\ref{eq:13}):

\begin{equation}
	\label{eq:3}
	h(t) = A(th(t)) = \frac{1}{(1-th(t))^{k-2}}.
\end{equation}

We also calculate $ d(t) $ using formula (\ref{eq:13}):

\begin{equation*}
	d(t) = \frac{d(0)}{1-tZ(th(t))} = \frac{1}{1-t\left(\frac{1}{th(t)(1-th(t))^{k-2}} - \frac{1}{th(t)}\right)} = \frac{th(t)(1-th(t))^{k-2}}{th(t)(1-th(t))^{k-2}-t+t(1-th(t))^{k-2}}.
\end{equation*}

We substitute $ th(t) = \omega $ and obtain:

\begin{multline*}
	d(t) = \frac{\omega(1-\omega)^{k-2}}{\omega(1-\omega)^{k-2}-\frac{\omega}{h(t)}+\frac{\omega}{h(t)}(1-\omega)^{k-2}} = \frac{(1-\omega)^{k-2}}{(1-\omega)^{k-2}-\frac{1}{h(t)}+\frac{(1-\omega)^{k-2}}{h(t)}} = \\
	= \frac{h(t)(1-\omega)^{k-2}}{h(t)(1-\omega)^{k-2}-1+(1-\omega)^{k-2}}.
\end{multline*}

Finally, we substitute the value of $ h(t) $ previously calculated in (\ref{eq:3}) and the equation $ th(t) = w $:

\begin{equation*}
	d(t) = \frac{1}{(1-\omega)^{k-2}}.
\end{equation*}

As we have $ d(t) = h(t) $, Theorem \ref{thm:3} states (after a shift of indices), that $ v_{j}^{r} = \frac{j}{r}[t^{r-j}]A(t)^{r} $. So it remains to determine the coefficient of $ t^{r-j} $ in $  \frac{j}{r}A(t)^{r} $. Using the well-known identity:
\begin{equation}
	\label{eq:8}
	\left(\frac{1}{1-x}\right)^{r} = \sum_{\ell=0}^{\infty} \binom{r+\ell-1}{\ell}x^{\ell},
\end{equation}

we thus arrive at the claimed formula by setting $ r -j = \ell $
\begin{equation*}
	v_{j}^{r} = \frac{j}{r}[t^{r-j}]A(t)^{r} = \frac{j}{r}[t^{r-j}]\sum_{\ell=0}^{\infty} \binom{(k-2)r+\ell-1}{\ell}t^{\ell} = \frac{j}{r}\binom{(k-1)r-j-1}{r-j} \tag*{\qedhere}.
\end{equation*}
\end{proof}

\subsection{Characteristic polynomial}

We now proceed to establish a formula for the characteristic polynomial of $ K_{r} $. The following corollary is proved by applying Theorem \ref{thm:13} with $ a_{-1} = 1 $ and $ a_{i-1} = \binom{k+i-3}{k-3} $ for $ i= 2, \ldots, r $.

\begin{corollary}
	\label{thm:34}
	The characteristic polynomial $ k_{r}(\lambda) $ of the matrix $ K_{r} $ satisfies the recurrence relation
	\begin{equation*}
		k_{r}(\lambda) = \left(\binom{k-2}{k-3}-\lambda\right)k_{r-1}(\lambda) + \sum_{i=2}^{r}(-1)^{i+1}\binom{k+i-3}{k-3}k_{r-i}(\lambda).
	\end{equation*}
\end{corollary}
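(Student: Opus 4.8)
The plan is to apply the recurrence of Theorem \ref{thm:13} directly, since $K_{r}$ is by inspection an Upper Hessenberg-Toeplitz matrix in the sense of Section $2.2$. First I would read off the diagonal data of $K_{r}$ from the displayed form: the subdiagonal entries are all equal to $1$, so $a_{-1} = 1$; the main diagonal is constant and equal to $\binom{k-2}{k-3}$, so $a_{0} = \binom{k-2}{k-3}$; and for $s \geq 1$ the $s$-th superdiagonal is constant and equal to $\binom{k+s-2}{k-3}$. Writing the generic diagonal offset as $i-1$ (so that $a_{i-1}$ denotes the entry on the diagonal at offset $i-1$ above the main diagonal), this is exactly $a_{i-1} = \binom{k+i-3}{k-3}$, which also correctly specializes to $a_{0}$ at $i=1$. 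The Toeplitz property itself is immediate: each row of $K_{r}$ is the previous row shifted one column to the right, as displayed, and all entries below the subdiagonal vanish.

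With these identifications in hand, the second step is to substitute into the recurrence of Theorem \ref{thm:13}. The crucial simplification is that $a_{-1} = 1$, so the factor $a_{-1}^{\,i-1}$ appearing in the sum equals $1$ for every $i$, and the recurrence collapses to
\[
	k_{r}(\lambda) = (a_{0}-\lambda)\,k_{r-1}(\lambda) + \sum_{i=2}^{r}(-1)^{i+1}\,a_{i-1}\,k_{r-i}(\lambda).
\]
Replacing $a_{0}$ and $a_{i-1}$ by their binomial values $\binom{k-2}{k-3}$ and $\binom{k+i-3}{k-3}$ yields precisely the claimed identity, with the base convention $k_{0}(\lambda) = 1$ inherited from $d_{0}(\lambda) = 1$ in Theorem \ref{thm:13}.

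Since the entire argument is just an instantiation of an already established recurrence, there is essentially no analytic obstacle. The only point requiring genuine care is the index bookkeeping: one must confirm that the $i$-th diagonal is governed by $\binom{k+i-3}{k-3}$ rather than by some shifted binomial, and that the summation endpoints ($i=2$ up to $r$) correctly align with the principal submatrices $K_{r-i}$ whose characteristic polynomials $k_{r-i}(\lambda)$ appear. This is the step I would sanity-check against small cases, notably $k=3$ (triangulations), where $K_{r}$ reduces to the classical Catalan production matrix of Table \ref{fig:11}(a) and the recurrence should reproduce its known characteristic polynomial, and $k=4$ (quadrangulations), where the binomial entries become $\binom{i-1}{1}=i-1$ and the recurrence should match the quadrangulation analogue.
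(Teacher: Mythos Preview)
Your argument is correct and is exactly the paper's own proof: the paper simply states that the corollary follows by applying Theorem~\ref{thm:13} with $a_{-1}=1$ and $a_{i-1}=\binom{k+i-3}{k-3}$, which is precisely the identification you carry out. (One tiny slip in your closing sanity-check remark: for $k=4$ one gets $a_{i-1}=\binom{i+1}{1}=i+1$, not $\binom{i-1}{1}$, but this does not affect the proof.)
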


\bigskip

We need the following result to solve the recurrence relation.

\begin{lemma}
	\label{lm:1}
	For any given $ t, m, n \in \mathbb{N} $, it holds that
	\begin{equation*}
		\sum_{j=0}^{n}\binom{j+m-1}{m-1}\binom{m(t+1)}{n-j-t}(-1)^j = \binom{mt}{n-t}.
	\end{equation*}
\end{lemma}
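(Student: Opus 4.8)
The plan is to read both binomials as coefficients of explicit power series and recognize the whole sum as a single Cauchy (binomial) convolution. First I would absorb the sign into the first factor. Identity~(\ref{eq:8}), written as $(1-y)^{-m}=\sum_{j\ge 0}\binom{j+m-1}{m-1}y^{j}$ (using $\binom{j+m-1}{\ell}=\binom{j+m-1}{m-1}$ with $\ell=j$), gives after the substitution $y=-x$ the identity
\begin{equation*}
	\binom{j+m-1}{m-1}(-1)^{j} = [x^{j}](1+x)^{-m}.
\end{equation*}
This is the key step that turns the alternating sign into a clean generating function.

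Next I would handle the second factor by the ordinary binomial theorem, $(1+x)^{m(t+1)}=\sum_{\ell\ge 0}\binom{m(t+1)}{\ell}x^{\ell}$, so that
\begin{equation*}
	\binom{m(t+1)}{n-j-t} = [x^{\,n-j-t}](1+x)^{m(t+1)}.
\end{equation*}
The summand is therefore $[x^{j}](1+x)^{-m}\cdot[x^{(n-t)-j}](1+x)^{m(t+1)}$, which is exactly the $j$-th term of a Cauchy product at total degree $n-t$. I would note that $\binom{m(t+1)}{n-j-t}=0$ whenever $j>n-t$, so extending the index range up to $j=n$ (as in the statement) only appends vanishing terms and does not change the value; this is what makes the stated summation bounds agree with the convolution.

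Finally I would combine the two series and read off the coefficient:
\begin{equation*}
	\sum_{j=0}^{n}\binom{j+m-1}{m-1}\binom{m(t+1)}{n-j-t}(-1)^{j} = [x^{\,n-t}]\,(1+x)^{-m}(1+x)^{m(t+1)} = [x^{\,n-t}](1+x)^{mt} = \binom{mt}{n-t},
\end{equation*}
which is the desired right-hand side. The only real obstacle is bookkeeping rather than substance: one must get the sign bundled correctly into the factor $(1+x)^{-m}$ and verify that the summation range in the statement really corresponds to the convolution (handled by the vanishing of the negative-index binomial). Alternatively, the same identity could be phrased as an instance of the Chu--Vandermonde convolution with a negative upper parameter, but the generating-function route is the cleanest and matches the tools already introduced in Section~2.1.
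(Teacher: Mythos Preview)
Your proof is correct and is essentially the same as the paper's: the paper rewrites $(-1)^{j}\binom{j+m-1}{m-1}=\binom{-m}{j}$ via upper negation and then applies Chu--Vandermonde, which is exactly your Cauchy convolution $[x^{n-t}](1+x)^{-m}(1+x)^{m(t+1)}=[x^{n-t}](1+x)^{mt}$ phrased in generating-function language. You even identify this equivalence yourself in the last sentence, so there is no substantive difference between the two arguments.
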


\begin{proof}
	The proof of the lemma is obtained by using two identities related to the binomial coefficient. Using the negation of the upper index of a binomial coefficient, see \textcolor{blue}{\cite[$ \textbf{G} (17) $]{Identities}}, we get that
	\begin{equation*}
		\sum_{j=0}^{n}\binom{j+m-1}{m-1}\binom{m(t+1)}{n-j-t}(-1)^j = \sum_{j=0}^{n-t}(-1)^j\binom{m+j-1}{j}\binom{m(t+1)}{n-t-j} = \sum_{j=0}^{n-t}\binom{-m}{j}\binom{m(t+1)}{n-t-j}.
	\end{equation*}
	
	Finally, applying Chu-Vandermonde identity, see \textcolor{blue}{\cite[$ \textbf{I} (21) $]{Identities}}, we obtain:
	\begin{equation*}
		\sum_{j=0}^{n-t}\binom{-m}{j}\binom{m(t+1)}{n-t-j} = \binom{-m+m(t+1)}{n-t} = \binom{mt}{n-t}.\qedhere
	\end{equation*}
	
\end{proof}
\bigskip

\begin{theorem}
	\label{thm:35}
	The solution of the recurrence relation
	\begin{equation*}
		k_{r}(\lambda) = \left(\binom{k-2}{k-3}-\lambda\right)k_{r-1}(\lambda) - \sum_{i=2}^{r}(-1)^{i}\binom{k+i-3}{k-3}k_{r-i}(\lambda),
	\end{equation*}
	with initial value $ k_{0}(\lambda) = 1 $ is
	\begin{equation*}
		k_{r} = \sum_{\ell=0}^{r}(-1)^\ell\binom{(k-2)(\ell+1)}{r-\ell} \lambda^{\ell}.
	\end{equation*}
\end{theorem}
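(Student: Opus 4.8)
The plan is to verify the closed form by strong induction on $r$, substituting the proposed expression into the recurrence and matching the coefficient of each power $\lambda^\ell$. To lighten the notation I would set $m=k-2$, so that $\binom{k-2}{k-3}=m$ and $\binom{k+i-3}{k-3}=\binom{m+i-1}{m-1}$; writing $c_i=\binom{m+i-1}{m-1}$ (with $c_0=1$ and $c_1=m$), the claimed formula reads $k_r=\sum_{\ell=0}^r(-1)^\ell\binom{m(\ell+1)}{r-\ell}\lambda^\ell$. The base cases $k_0=1$ and $k_1=m-\lambda$ are immediate from both the recurrence and the formula.

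For the inductive step I would first rewrite the recurrence in the single-sum form
\[
k_r=-\lambda k_{r-1}+\sum_{i=1}^r(-1)^{i+1}c_i\,k_{r-i},
\]
which simply absorbs the $\binom{k-2}{k-3}k_{r-1}$ term into the $i=1$ summand. Assuming the formula for all indices below $r$, I extract the coefficient of $\lambda^\ell$ on the right-hand side. The term $-\lambda k_{r-1}$ contributes $(-1)^\ell\binom{m\ell}{r-\ell}$ after an index shift $\ell\mapsto\ell-1$, while the sum contributes $(-1)^\ell\sum_{i=1}^r(-1)^{i+1}c_i\binom{m(\ell+1)}{r-\ell-i}$. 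Equating with the target coefficient $(-1)^\ell\binom{m(\ell+1)}{r-\ell}$ and cancelling the common sign, the entire theorem reduces to the single identity
\[
\binom{m(\ell+1)}{r-\ell}-\binom{m\ell}{r-\ell}=\sum_{i=1}^r(-1)^{i+1}c_i\binom{m(\ell+1)}{r-\ell-i}.
\]

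This identity is exactly what Lemma \ref{lm:1} delivers. Applying it with parameters $t=\ell$ and $n=r$ yields $\sum_{j=0}^r(-1)^j c_j\binom{m(\ell+1)}{r-\ell-j}=\binom{m\ell}{r-\ell}$; splitting off the $j=0$ term, which equals $\binom{m(\ell+1)}{r-\ell}$ because $c_0=1$, rearranges precisely into the displayed equation. The real work is therefore not analytic but organizational: rewriting the recurrence in the single-sum form, tracking the index shift produced by the $-\lambda k_{r-1}$ contribution, and aligning the summation bounds so that Lemma \ref{lm:1} applies directly. I expect the only genuine obstacle to be this index matching — in particular, checking that binomials with negative lower index vanish, so that the finite sum appearing in the recurrence and the full range of the sum in Lemma \ref{lm:1} coincide.
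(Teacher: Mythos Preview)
Your proposal is correct and follows essentially the same approach as the paper: strong induction on $r$, substituting the closed form into the recurrence and reducing the verification to Lemma~\ref{lm:1}. Your organization is somewhat cleaner than the paper's --- you first absorb the $\binom{k-2}{k-3}k_{r-1}$ term into the sum as the $i=1$ case, then extract the coefficient of $\lambda^\ell$ directly, whereas the paper keeps the $\bigl(\binom{k-2}{k-3}-\lambda\bigr)k_{r-1}$ term separate and evaluates the tail sum $A=\sum_{i\geq 2}$ via Lemma~\ref{lm:1} before cancelling --- but the substance is identical, and the only nontrivial input in both arguments is Lemma~\ref{lm:1}.
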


\begin{proof}
	We use induction on $ r $. The base cases for $ r \leq 2 $ are easily verified. So, assume the theorem holds for all $ i \leq r-1 $. Let then $ i \geq r $ and consider then $ k_{r}(\lambda) $. By induction,
	\small{
	\begin{equation*}
		k_{r}(\lambda) = \left(\binom{k-2}{k-3}-\lambda\right)\sum_{\ell=0}^{r-1}(-1)^{\ell}\binom{(k-2)(\ell+1)}{r-1-\ell}\lambda^{\ell} - \sum_{i=2}^{r}(-1)^{i}\binom{k+i-3}{k-3}\sum_{\ell=0}^{r-i}(-1)^{\ell}\binom{(k-2)(\ell+1)}{r-i-\ell}\lambda^{\ell}.
	\end{equation*}}
	
	We rewrite
	\begin{equation*}
		A = \sum_{i=2}^{r}(-1)^{i}\binom{k+i-3}{k-3}\sum_{\ell=0}^{r-i}(-1)^{\ell}\binom{(k-2)(\ell+1)}{r-i-\ell}\lambda^{\ell}
	\end{equation*}
	
	in the form $ \sum_{\ell=0}^{r}\lambda^{\ell}(-1)^{\ell}c_{\ell} $, where $ c_{\ell} $ is the coefficient of $ \lambda^{\ell}(-1)^{\ell} $ and get
	
	\begin{equation*}
		A = \sum_{\ell=0}^{r}\lambda^{\ell}(-1)^{\ell}\sum_{i=2}^{r}\binom{i+k-3}{k-3}\binom{(k-2)(\ell+1)}{r-i-\ell}(-1)^{i}.
	\end{equation*}

	This further equals, by Lemma \ref{lm:1},
	
	\begin{equation*}
		A = \sum_{\ell=0}^{r}\lambda^{\ell}(-1)^{\ell}\left(\binom{(k-2)\ell}{r-\ell}-\binom{(k-2)(\ell+1)}{r-\ell} + \binom{k-2}{k-3}\binom{(k-2)(\ell+1)}{r-1-\ell}\right).
	\end{equation*}
	
	Then,
	\begin{multline*}
		k_{r}(\lambda) = \left(\binom{k-2}{k-3}-\lambda\right)\sum_{\ell=0}^{r-1}(-1)^{\ell}\binom{(k-2)(\ell+1)}{r-1-\ell}\lambda^{\ell} - A = \sum_{\ell=0}^{r-1}(-1)^{\ell}\binom{k-2}{k-3}\binom{(k-2)(\ell+1)}{r-1-\ell}\lambda^{\ell}- \\
		- \sum_{\ell=0}^{r-1}(-1)^{\ell}\binom{(k-2)(\ell+1)}{r-1-\ell}\lambda^{\ell+1} -\sum_{\ell=0}^{r}\lambda^{\ell}(-1)^{\ell}\bigg(\binom{(k-2)\ell}{r-\ell}-\binom{(k-2)(\ell+1)}{r-\ell} + \binom{k-2}{k-3}\binom{(k-2)(\ell+1)}{r-1-\ell}\bigg).\\
	\end{multline*}

	We make the change of variable $ \ell+1=s $ in the second summation and get:
	\small{
	\begin{multline*}          
		k_{r}(\lambda) = \sum_{\ell=0}^{r}(-1)^{\ell}\binom{k-2}{k-3}\binom{(k-2)(\ell+1)}{r-1-\ell}\lambda^{\ell}-\sum_{s=1}^{r}(-1)^{s-1}\binom{(k-2)s}{r-s}\lambda^{s} - \\
		-\sum_{\ell=0}^{r}\lambda^{\ell}(-1)^{\ell}\left(\binom{(k-2)\ell}{r-\ell}-\binom{(k-2)(\ell+1)}{r-\ell} + \binom{k-2}{k-3}\binom{(k-2)(\ell+1)}{r-\ell-1}\right) =\\
		= \sum_{\ell=0}^{r}(-1)^{\ell}\binom{k-2}{k-3}\binom{(k-2)(\ell+1)}{r-1-\ell}\lambda^{\ell} + \sum_{s=1}^{r}(-1)^{s}\binom{(k-2)s}{r-s}\lambda^{s}		-\sum_{\ell=0}^{r}\lambda^{\ell}(-1)^{\ell}\binom{(k-2)\ell}{r-\ell} + \\
		+\sum_{\ell=0}^{r}\lambda^{\ell}(-1)^{\ell}\binom{(k-2)(\ell+1)}{r-\ell} - \sum_{\ell=0}^{r}(-1)^{\ell}\binom{k-2}{k-3}\binom{(k-2)(\ell+1)}{r-\ell-1}\lambda^{\ell} = \sum_{\ell=0}^{r}\lambda^{\ell}(-1)^{\ell}\binom{(k-2)(\ell+1)}{r-\ell}.\tag*{\qedhere}
	\end{multline*}}
\end{proof}

\bigskip

We now give a formula for the entries of an eigenvector $ x $ associated to an eigenvalue $ \lambda $ of $ K_{r} $. The proof of the result is obtained by applying Theorem \ref{thm:14} with $ a_{-1}=1 $.

\vspace{3.5mm}

\begin{corollary}
	\label{thm:36}
	Let $ x=(x_{r-1},x_{r-2},\ldots,x_{0})^{\top} $ be an eigenvector associated to an eigenvalue $ \lambda $ of $ K_{r} $ for $ r \geq 2 $. Then the entries of the vector $ x $ are of the form:
	
	\begin{equation*}
		x_{i} = (-1)^{i}k_{i}(\lambda)x_{0},
	\end{equation*}
	
	where $ k_{i}(\lambda) $ is the characteristic polynomial of $ K_{i} $.
\end{corollary}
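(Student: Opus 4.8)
The plan is to obtain this result as an immediate specialization of Theorem \ref{thm:14}, since the matrix $K_{r}$ is already established to be Upper Hessenberg-Toeplitz and Theorem \ref{thm:14} gives the general form of an eigenvector for any such matrix. First I would observe that Theorem \ref{thm:14} applies verbatim to $K_{r}$: its hypotheses require only that $A_{n}$ be Hessenberg-Toeplitz and $n \geq 2$, which matches the statement of the corollary with $n = r$. The eigenvector is indexed in the same descending order $(x_{r-1}, x_{r-2}, \ldots, x_{0})^{\top}$, so no reindexing is needed.

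The key computational step is simply to read off the Toeplitz parameter $a_{-1}$ for $K_{r}$. Inspecting the displayed form of $K_{r}$, the subdiagonal entries are all equal to $1$, so $a_{-1} = 1$. Substituting this into the conclusion of Theorem \ref{thm:14}, namely $x_{i} = \left(\tfrac{-1}{a_{-1}}\right)^{i} d_{i}(\lambda)\, x_{0}$, the factor $\left(\tfrac{-1}{a_{-1}}\right)^{i}$ collapses to $(-1)^{i}$, yielding $x_{i} = (-1)^{i} d_{i}(\lambda)\, x_{0}$ for all $i = 1, \ldots, r-1$.

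The last identification to make is that the polynomial $d_{i}(\lambda)$ from Theorem \ref{thm:14}, defined there as the characteristic polynomial of the $i \times i$ matrix $A_{i}$, is exactly $k_{i}(\lambda)$ here. This holds because the leading principal $i \times i$ submatrix of the Toeplitz matrix $K_{r}$ is itself generated by the same entry sequence and therefore coincides with $K_{i}$; its characteristic polynomial is $k_{i}(\lambda)$, as studied in Corollary \ref{thm:34} and Theorem \ref{thm:35}. With this identification the formula $x_{i} = (-1)^{i} k_{i}(\lambda)\, x_{0}$ follows.

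I do not expect any genuine obstacle, as the corollary is a direct instantiation; the only point deserving a word of care is the verification that the leading principal submatrices of $K_{r}$ are precisely the matrices $K_{i}$, which is immediate from the Toeplitz structure but is what licenses writing $k_{i}(\lambda)$ in place of the abstract $d_{i}(\lambda)$ of Theorem \ref{thm:14}.
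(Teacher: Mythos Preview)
Your proposal is correct and matches the paper's own proof exactly: the paper simply states that the result is obtained by applying Theorem~\ref{thm:14} with $a_{-1}=1$. Your additional remark about identifying the leading principal $i\times i$ submatrix of $K_{r}$ with $K_{i}$ is a welcome clarification, but otherwise there is nothing to add.
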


\section{Counting geometric graphs}

In this section, we work with plane geometric graphs whose vertices are represented by the points $ \{p_{1}, \ldots, p_{n}\} $ in convex position in the plane, ordered counter-clockwise. Figures \ref{fig:10} and \ref{fig:7} show several plane geometric graphs.

\subsection{Production matrix}

In a previous paper \textcolor{blue}{\cite{Geometric}}, a production matrix for the number of graphs with given root vertex degree was given. In that case, the degree of the root vertex $ p_{n} $ was defined as the number of edges incident to $ p_{n} $. Also, a formula for the number of geometric graphs was given. However, there are other ways to define the degree of a vertex in the graph.

In this section, the degree of the root vertex is defined in a different way, based on visibility. The \emph{visibility degree} of the root vertex $ p_{n} $ is the number of visible vertices from a vertex $ p_{n+1} $ inserted between $ p_{1} $ and $ p_{n} $ in convex position, minus $ 2 $. Two vertices are visible if the line segment connecting them does not intersect the interior of any edge of the graph. We substract 2 because every vertex in the graph \textquotedblleft sees\textquotedblright \ at least one edge and its endpoints and hence, these vertices can be disregarded.

We start by giving the first level of a generating tree for a given geometric graph in Figure \ref{fig:7}. We generate its children by leaving the new root vertex isolated or connecting it to a subset of vertices visible from the new root vertex.

\vspace{3.5mm}

\begin{center}
	\includegraphics[scale=0.68]{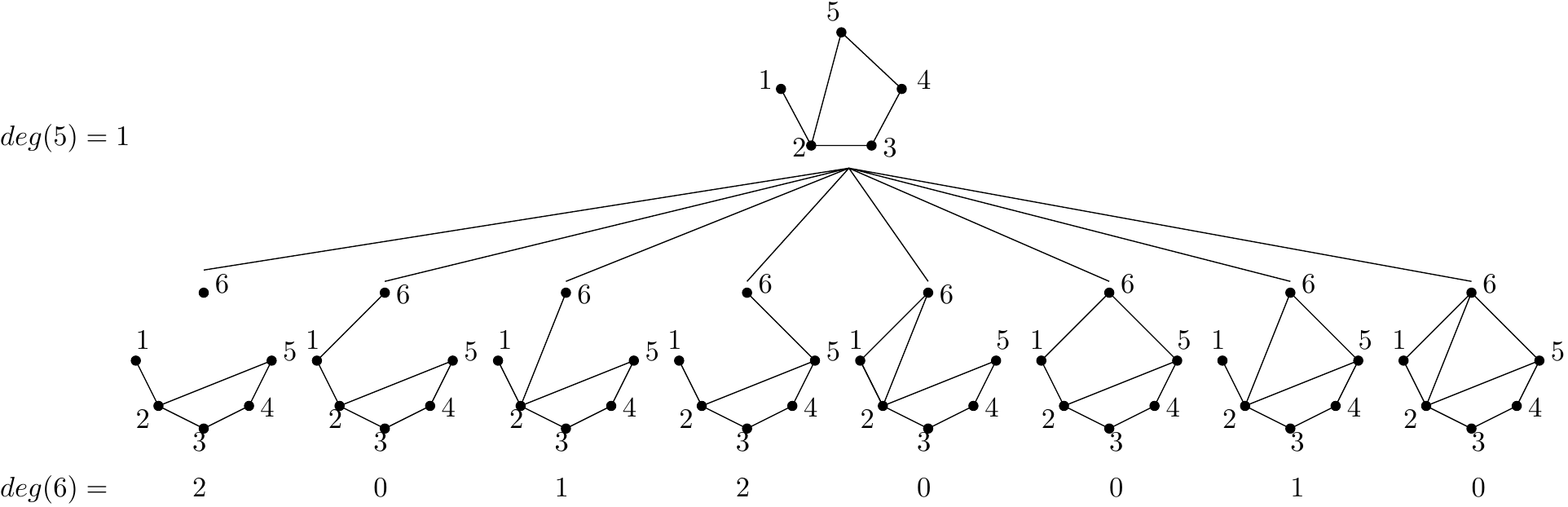}
	\captionof{figure}{Children of a given geometric graph in the tree of geometric graphs.}
	\label{fig:7}
\end{center}

Consider a vertex $ p_{i+1} $ that is inserted in convex position between $ p_{1} $ and $ p_{i} $. Let $ V(p_{i+1}) = \{p_{1}=p_{i+1}^{1}, p_{i+1}^{2}, \ldots, p_{i+1}^{t}=p_{i}\}, \ t < i+1 $, be the sequence, ordered counter-clockwise, of visible vertices from $ p_{i+1} $. We have $ \binom{t}{0} $ ways of leaving $ p_{i+1} $ isolated, $ \binom{t}{1} $ ways of adding an edge from $ p_{i+1} $ to one vertex of $ V(p_{i+1}) $, $ \binom{t}{2} $ ways of adding two edges from $ p_{i+1} $ to a pair of vertices of $ V(p_{i+1}) $, and so on. Thus, the number of children of a given geometric graph with root vertex of visibility degree $ t-2 $ is $ \sum_{k=0}^{t}\binom{t}{k} = 2^{t} $.\bigskip

\begin{theorem}
	\label{thm:38}
	The following $ n \times n $ matrix is a production matrix for geometric graphs of point sets in convex position.
	
	\begin{equation*}
		G_{n} = \begin{pmatrix}
			2 & 4 & 8 & 16 & 32 & \cdots & 2^{n} \\
			2 & 2 & 4 & 8 & 16 & \cdots & 2^{n-1} \\
			0 & 2 & 2 & 4 & 8 & \cdots & 2^{n-2} \\
			0 & 0 & 2 & 2 & 4 & \cdots & 2^{n-3} \\
			0 & 0 & 0 & 2 & 2 & \cdots & 2^{n-4} \\
			\vdots & \vdots & \vdots & \vdots & \vdots & \ddots & \vdots \\
			0 & 0 & 0 & 0 & 0 & \cdots & 2 \\
		\end{pmatrix}.
	\end{equation*}
	
\end{theorem}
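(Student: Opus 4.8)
The plan is to read off the entries of $G_n$ from the generating tree of geometric graphs, exactly as in the proof for $k$-angulations: I will fix a parent whose root has a prescribed visibility degree, count its children sorted by the visibility degree of the \emph{new} root, and verify that these counts coincide with the Hessenberg--Toeplitz entries $a_{c-r}$ of $G_n$, where $a_{-1}=2$ and $a_j=2^{\,j+1}$ for $j\ge 0$. Since each entry of $v^{i+1}$ is the sum, over all parents, of the children sharing a common new-root degree, this yields $v^{i+1}=G_n v^i$.

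First I would fix the tree. The parent of a geometric graph on $i+1$ vertices is obtained by deleting the root $p_{i+1}$ together with all its incident edges; this map is well defined, so every graph has a unique parent and is generated exactly once. Conversely, as observed before the statement, the children of a graph with root $p_i$ are obtained by inserting $p_{i+1}$ between $p_1$ and $p_i$ and joining it to an arbitrary subset $S\subseteq V(p_{i+1})=\{p_{i+1}^1=p_1,\ldots,p_{i+1}^t=p_i\}$ of its visible vertices; each choice gives a plane graph and distinct subsets give distinct children, so a parent of root visibility degree $t-2$ has $2^t$ children. A parent contributing to $v^i_c$ thus has $t=c+1$ visible vertices, and for each row $r$ I must determine how many of its children have new-root visibility degree $r-1$.

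The crux is a visibility lemma describing the children through the eyes of the next inserted vertex $p_{i+2}$ (placed between $p_1$ and $p_{i+1}$). I claim that $p_{i+2}$ sees exactly $p_{i+1}$ together with those $p_{i+1}^k$ for which $p_{i+1}$ is joined to no $p_{i+1}^j$ with $j<k$. This is a statement about chords of a convex polygon: ordering the relevant points counter-clockwise as $p_{i+1}^1,\ldots,p_{i+1}^t,p_{i+1},p_{i+2}$, the chord $p_{i+2}p_{i+1}^k$ is crossed by the edge $p_{i+1}p_{i+1}^j$ precisely when the four endpoints alternate, which happens if and only if $j<k$; one must also check that no vertex outside $V(p_{i+1})$ becomes visible and that the pre-existing edges, which already left $p_{i+1}^1,\ldots,p_{i+1}^t$ visible from $p_{i+1}$, do not obstruct $p_{i+2}$. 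Establishing this cleanly---in particular ruling out interference from old edges and from non-visible vertices---is the main obstacle; everything else is bookkeeping.

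Granting the lemma, the visibility degree of the new root $p_{i+1}$ is governed by the smallest index present in $S$: if $S=\emptyset$ then $p_{i+2}$ sees all $t$ vertices $p_{i+1}^k$ plus $p_{i+1}$, giving degree $t-1$; if the minimum index in $S$ is $m$, then $p_{i+2}$ sees exactly $p_{i+1}^1,\ldots,p_{i+1}^m$ and $p_{i+1}$, giving degree $m-1$. Counting subsets by their minimum element, there are $2^{\,t-m}$ subsets with minimum $m$, so the number of children of new-root degree $d$ is $2^{\,t-1-d}$ for $0\le d\le t-2$, while $d=t-1$ is produced both by $S=\emptyset$ and by $S=\{p_{i+1}^t\}$, for a total of $2$. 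With $t=c+1$ and $d=r-1$ this is exactly $a_{c-r}$: for $c-r\ge 0$ one gets $2^{\,t-1-d}=2^{\,(c-r)+1}$, and for $c-r=-1$ one gets $2$. Hence the contribution of each parent matches the corresponding column of $G_n$, and summing over parents gives $v^{i+1}=G_n v^i$, as required.
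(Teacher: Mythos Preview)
Your proof is correct and follows essentially the same approach as the paper's: both insert $p_{i+1}$, connect it to a subset $S\subseteq V(p_{i+1})$, and determine the new visibility degree from which vertices $p_{i+2}$ can still see. Your formulation via the minimum index of $S$ is in fact cleaner than the paper's row-by-row case analysis, and you make explicit the visibility lemma that the paper only uses implicitly; the concern you flag about old edges and non-visible vertices is easily dispatched (any viewpoint on the arc between $p_i$ and $p_1$ sees the same vertices of the parent graph, since crossing of convex-position chords depends only on the cyclic order), and the paper does not address it either.
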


\begin{proof}
	Assume that the vector $ v^{i} $ containing the number of geometric graphs for each possible visibility degree of $ p_{i} $ is known. For $ v^{i+1} $, with $ i \leq n $, consider a vertex $ p_{i+1} $ inserted between $ p_{1} $ and $ p_{i} $ in convex position. From the definition of the generating tree of geometric graphs, and the relation $ v^{i+1} = G_{n}v^{i} $ we can determine the entries of $ G_{n} $ as follows:  

\begin{itemize}
	\item \textbf{First row}. For $ p_{i+1} $ to have visibility degree $ 0 $, an edge from $ p_{i+1} $ to $ p_{i+1}^{1} \in V(p_{i+1}) $ must be included. Thus, the number of geometric graphs where $ p_{i+1} $ has visibility degree $ 0 $ is equal to the number of graphs where $ p_{i} $ has visibility degree $ s $, for $ 0 \leq s \leq n-2 $ when adding a subset of edges from $ p_{i+1} $ to some $ p_{i+1}^{j} \in V(p_{i+1}) $, with $ 1 < j \leq t $. This gives a total number of $ \sum_{j=0}^{s+1} \binom{s+1}{j} = 2^{s+1} $ graphs, resulting in a first row of powers of $ 2 $ in $ G_n $.
	
	\item \textbf{Second row}. The number of geometric graphs where $ p_{i+1} $ has visibility degree $ 1 $, obtained from the graphs where $ p_{i} $ has visibility degree $ 0 $, is equal to the two graphs leaving $ p_{i+1} $ isolated or connecting it with $ p_{i} $, thus we get a $ 2 $ in the first column of the second row. As soon as $ p_{i} $ has visibility degree at least $ 1 $, an edge from $ p_{i+1} $ to $ p_{i+1}^{2} \in V(p_{i+1}) $ must be included, and then, we add a subset of edges from $ p_{i+1} $ to some $ p_{i+1}^{j} \in V(p_{i+1}) $ for $ j > 2 $. Thus, the rest of the row is made of powers of $ 2 $.
	
	\item \textbf{Other rows}. The cases where $ |V(p_{i+1})| > |V(p_{i})|+1 $ are not possible, so we get a zero in these cases. The following rows are analogous, shifted by one column every time: in order for $ p_{i+1} $ to have visibility degree $ s $, an edge from $ p_{i+1} $ to $ p_{i+1}^{s+1} \in V(p_{i+1}) $ must be included, and then we can add a subset of edges from $ p_{i+1} $ to some $ p_{i+1}^{j} \in V(p_{i+1}) $, for $ j > s+1 $. So, we get a power of $ 2 $ for each entry. \qedhere
\end{itemize}
\end{proof}\smallskip

Let $ v^{i} $ be a vector of geometric graphs (that is, $ v^{i} $ is the vector obtained by a power of $ G_{n} $ for $ i \leq n $ multiplied by $ v^2 $; the sum of the elements of $ v^{i} $ is the number of geometric graphs with $ i $ vertices). The first vectors are:

\begin{equation*}
	v^{2} = \begin{pmatrix}
		2 \\
		0 \\
		0 \\
		0 \\
		0 \\
		\vdots
	\end{pmatrix} \qquad v^{3} = 
	\begin{pmatrix}
		4 \\
		4 \\
		0 \\
		0 \\
		0 \\
		\vdots
	\end{pmatrix} \qquad v^{4} = 
	\begin{pmatrix}
		24 \\
		16 \\
		8 \\
		0 \\
		0 \\
		\vdots
	\end{pmatrix} \qquad v^{5} = 
	\begin{pmatrix}
		176 \\
		112 \\
		48 \\
		16 \\
		0 \\
		\vdots
	\end{pmatrix}
\end{equation*}

\bigskip

If we sum up all the entries of each vector $ v^{i} $, we can verify that the sequence of numbers coincides with the one given in \textcolor{blue}{\cite{Noy, Geometric}} and corresponds to Sequence $ A054726 $ in the OEIS \textcolor{blue}{\cite{OEIS}} for the number of graphs with $ n $ nodes on a circle without crossing edges.

\vspace{3.5mm}

Next, we give a formula to calculate the entries of the vector $ v^{n} $ of geometric graphs different from the one obtained in \textcolor{blue}{\cite{Geometric}}. The difference comes from our definition of the degree of the root vertex. The proof of the theorem is similar to the one for Theorem \ref{thm:32}, so it is deferred to Appendix \ref{A.1}.

\begin{theorem}
	\label{thm:37}
	The number of geometric graphs with $ n $ vertices where the root vertex has visibility degree $ j-1 $, $ \forall j=1,\ldots,n-1 $ is:
	\begin{equation*}
		v_{j}^{n} = \frac{j}{n-1}2^{n-1-j}\sum_{k=j}^{n-1}\binom{n-1}{k}\binom{n+k-j-2}{k-j}(-1)^{n-1-k}2^{k}.
	\end{equation*}
\end{theorem}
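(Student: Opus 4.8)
The plan is to mirror the proof of Theorem~\ref{thm:32}: I exploit that the production matrix $G_n$ is the transfer matrix of a proper Riordan Array (Theorem~\ref{thm:5}), verify the hypothesis of Theorem~\ref{thm:3}, and then extract a single coefficient. First I would read off the A- and Z-sequences from the rows of $G_n$. The second row onward gives the A-sequence $\{2,2,4,8,16,\ldots\}$, with generating function
\begin{equation*}
	A(t) = 2 + \sum_{i\geq 1}2^i t^i = \frac{2(1-t)}{1-2t},
\end{equation*}
while the first row gives the Z-sequence $\{2,4,8,16,\ldots\}$ with $Z(t) = \frac{2}{1-2t}$. The initial value is $d(0) = v_1^2 = 2$.

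Next, using formula~(\ref{eq:13}) I would record $h(t) = A(th(t)) = \frac{2(1-th(t))}{1-2th(t)}$ and then compute $d(t) = \frac{d(0)}{1 - tZ(th(t))}$. The key algebraic step is to simplify this with the defining relation for $h$: from $h(t)\bigl(1-2th(t)\bigr) = 2\bigl(1-th(t)\bigr)$ one gets $\frac{1}{1-2th(t)} = \frac{h(t)}{2(1-th(t))}$, hence $tZ(th(t)) = \frac{th(t)}{1-th(t)}$ and $1 - tZ(th(t)) = \frac{1-2th(t)}{1-th(t)}$. Substituting back yields
\begin{equation*}
	d(t) = \frac{2(1-th(t))}{1-2th(t)} = h(t).
\end{equation*}
So $d(t) = h(t)$ (even though $d(0)=2\neq 1$, which does not affect the second case of Theorem~\ref{thm:3}). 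Because the iteration begins at $v^2$, i.e. $v^n = G_n^{\,n-2}v^2$, the entry $v_j^n$ is the Riordan entry $d_{n-2,\,j-1}$; applying Theorem~\ref{thm:3} with $m=n-2$, $\ell=j-1$ gives the shifted formula
\begin{equation*}
	v_j^n = \frac{j}{n-1}\,[t^{\,n-1-j}]A(t)^{\,n-1}.
\end{equation*}

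Finally I would extract this coefficient. Writing $A(t)^{n-1} = 2^{n-1}(1-t)^{n-1}(1-2t)^{-(n-1)}$, I expand $(1-t)^{n-1} = \sum_{k=0}^{n-1}\binom{n-1}{k}(-1)^{n-1-k}t^{\,n-1-k}$ (re-indexing by $k=n-1-c$) and expand $(1-2t)^{-(n-1)}$ via identity~(\ref{eq:8}), whose coefficient of $t^{\,k-j}$ is $\binom{n+k-j-2}{k-j}2^{\,k-j}$. Convolving and collecting $t^{\,n-1-j}$ (so $k$ ranges over $j\leq k\leq n-1$) and using $2^{n-1}\cdot 2^{\,k-j} = 2^{\,n-1-j}\cdot 2^{\,k}$ produces exactly the stated double sum. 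I expect the main obstacle to be the verification that $d(t)=h(t)$: this is the one genuinely non-mechanical step, whereas the coefficient extraction is a routine convolution and the reduction to Theorem~\ref{thm:3} is as in Theorem~\ref{thm:32}. A minor care point is the index shift forced by the initial vector $v^2$ (rather than $v^1$), which is what yields $A(t)^{\,n-1}$ in place of $A(t)^{\,n}$.
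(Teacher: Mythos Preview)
Your proposal is correct and follows essentially the same approach as the paper: identify the A- and Z-sequences of $G_n$, use formula~(\ref{eq:13}) to show $d(t)=h(t)$, apply the second case of Theorem~\ref{thm:3} with the index shift forced by the initial vector $v^2$, and extract the coefficient of $t^{n-1-j}$ from $A(t)^{n-1}=2^{n-1}(1-t)^{n-1}(1-2t)^{-(n-1)}$. The only cosmetic difference is that you verify $d(t)=h(t)$ by manipulating the functional equation for $h$ directly, whereas the paper substitutes $w=th(t)$ and simplifies; both arrive at $d(t)=\frac{2(1-th(t))}{1-2th(t)}=h(t)$ in the same number of lines.
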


\subsection{Characteristic polynomial}

Let $ g_n(\lambda) $ be the characteristic polynomial of $ G_n $. The sequence $ \{g_{n}(\lambda)\} $ starts with

\begin{equation*}
	\begin{tabular}{c|c}
		$ n = 1 $ & $ -\lambda+2 $ \\
		$ n = 2 $ & $ \lambda^{2}-4\lambda-4 $ \\ 
		$ n = 3 $ & $ -\lambda^{3}+6\lambda^{2}+4\lambda+8 $ \\ 
		$ n = 4 $ & $ \lambda^{4}-8\lambda^{3}-16 $ \\
		$ n = 5 $ & $ -\lambda^{5}+10\lambda^{4}-8\lambda^{3}-16\lambda^{2}-16\lambda+32 $ \\
		$ n = 6 $ & $ \lambda^{6}-12\lambda^{5}+20\lambda^{4}+32\lambda^{3}+48\lambda^{2}+64\lambda-64 $
	\end{tabular}
\end{equation*}
\smallskip

\begin{corollary}
	\label{thm:40}
	The characteristic polynomial $ g_{n}(\lambda) $ of the matrix $ G_{n} $ satisfies the recurrence relation
	\begin{equation*}
		g_{n}(\lambda) = (2-\lambda)g_{n-1}(\lambda) + \sum_{i=2}^{n}(-1)^{i+1}2^{2i-1}g_{n-i}(\lambda).
	\end{equation*}
\end{corollary}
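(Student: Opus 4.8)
The plan is to observe that $G_n$ is an Upper Hessenberg-Toeplitz matrix in exactly the sense of Section~2.2, and then to invoke Theorem~\ref{thm:13} directly. First I would read off the defining entries of $G_n$ from the matrix in Theorem~\ref{thm:38}. The diagonal is constant equal to $2$, so $a_0 = 2$; the single nonzero subdiagonal is constant equal to $2$, so $a_{-1} = 2$; and the superdiagonals follow the pattern $4, 8, 16, 32, \ldots$, i.e. $a_j = 2^{j+1}$ for every $j \geq 1$. (Note that this formula is in fact consistent with $a_0 = 2 = 2^1$ as well, so $a_j = 2^{j+1}$ holds for all $j \geq 0$.)

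Having identified these entries, I would substitute them into the recurrence of Theorem~\ref{thm:13},
\begin{equation*}
	d_{n}(\lambda) = (a_{0}-\lambda)d_{n-1}(\lambda) + \sum_{i=2}^{n}(-1)^{i+1}a_{i-1}a_{-1}^{i-1}d_{n-i}(\lambda),
\end{equation*}
writing $g_n(\lambda)$ for the characteristic polynomial $d_n(\lambda)$ of $G_n$. The only computation required is to evaluate the coefficient $a_{i-1} a_{-1}^{i-1}$ appearing in the sum: since $a_{i-1} = 2^{(i-1)+1} = 2^{i}$ and $a_{-1} = 2$, we obtain $a_{i-1} a_{-1}^{i-1} = 2^{i} \cdot 2^{i-1} = 2^{2i-1}$. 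Substituting $a_0 = 2$ and this product into the recurrence yields
\begin{equation*}
	g_{n}(\lambda) = (2-\lambda)g_{n-1}(\lambda) + \sum_{i=2}^{n}(-1)^{i+1}2^{2i-1}g_{n-i}(\lambda),
\end{equation*}
which is precisely the claimed relation, with the initial condition $g_0(\lambda) = 1$ inherited from $d_0(\lambda) = 1$.

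There is no genuine obstacle here: the corollary is an immediate specialization of Theorem~\ref{thm:13}, and the entire content of the proof lies in correctly matching the entries of $G_n$ to the abstract parameters $a_0, a_{-1}, a_{i-1}$ and simplifying $2^i \cdot 2^{i-1}$ to $2^{2i-1}$. As a sanity check I would verify the recurrence against the small-$n$ values of $g_n(\lambda)$ tabulated just before the statement; for instance, computing $g_2(\lambda) = (2-\lambda)g_1(\lambda) - 2^3 g_0(\lambda) = (2-\lambda)(2-\lambda) - 8 = \lambda^2 - 4\lambda - 4$ reproduces the listed entry, which confirms the identification of the entries is correct.
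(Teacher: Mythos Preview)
Your proposal is correct and follows exactly the same approach as the paper, which simply states that the corollary follows from Theorem~\ref{thm:13} with $a_{-1}=2$, $a_0=2$, and $a_{i-1}=2^{i}$. Your write-up just spells out the substitution $a_{i-1}a_{-1}^{i-1}=2^i\cdot 2^{i-1}=2^{2i-1}$ explicitly and adds a sanity check, but there is no difference in method.
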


\smallskip
Corollary \ref{thm:40} follows from Theorem \ref{thm:13} with $ a_{-1} = 2, \ a_0 = 2 $ and $ a_{i-1} = 2^{i}$.

\begin{theorem}
	\label{thm:41}
	The solution of the recurrence relation
	\begin{equation*}
		g_{n}(\lambda) = (2-\lambda)g_{n-1}(\lambda) + \sum_{i=2}^{n}(-1)^{i+1}2^{2i-1}g_{n-i}(\lambda)
	\end{equation*}
	with initial condition $ g_{0}(\lambda) = 1 $ is
	\begin{equation*}
		g_{n}(\lambda) = \sum_{t=0}^{n}\sum_{k=t}^{n}(-1)^k\binom{k}{t}\binom{t+1}{n-k}2^{2n-t-k}\lambda^{t}.
	\end{equation*}
\end{theorem}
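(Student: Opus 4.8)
The plan is to pass to the generating function $D(z)=\sum_{n\ge 0}g_n(\lambda)z^n$, obtain a closed rational expression for it, and read the coefficient formula off directly. First I would multiply the recurrence of Corollary \ref{thm:40} by $z^n$ and sum over $n\ge 1$. Using $g_0(\lambda)=1$, the left-hand side becomes $D(z)-1$, the term $(2-\lambda)g_{n-1}$ contributes $(2-\lambda)zD(z)$, and after interchanging the order of summation the tail $\sum_{i=2}^{n}(-1)^{i+1}2^{2i-1}g_{n-i}$ contributes $D(z)\sum_{i\ge 2}(-1)^{i+1}2^{2i-1}z^i$. The inner geometric series sums to a rational function of $z$, and after collecting terms one obtains
\begin{equation*}
D(z)=\frac{1+4z}{1+(2+\lambda)z+4\lambda z^{2}}.
\end{equation*}
I would sanity-check this against the tabulated values $g_1,\dots,g_6$ before proceeding.

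The key algebraic observation is that the denominator splits the $\lambda$-dependence off cleanly,
\begin{equation*}
1+(2+\lambda)z+4\lambda z^{2}=(1+2z)+\lambda z(1+4z).
\end{equation*}
Dividing numerator and denominator by $1+2z$ and expanding the factor $\bigl(1+\lambda z(1+4z)/(1+2z)\bigr)^{-1}$ as a geometric series in $\lambda$ gives
\begin{equation*}
D(z)=\sum_{t\ge 0}(-\lambda)^{t}z^{t}\,\frac{(1+4z)^{t+1}}{(1+2z)^{t+1}},
\end{equation*}
so that $[\lambda^{t}]g_n(\lambda)=(-1)^{t}\,[z^{\,n-t}]\,(1+4z)^{t+1}(1+2z)^{-(t+1)}$.

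It then remains to extract this coefficient with the binomial series $(1+4z)^{t+1}=\sum_{a}\binom{t+1}{a}4^{a}z^{a}$ and $(1+2z)^{-(t+1)}=\sum_{b}(-1)^{b}\binom{t+b}{b}2^{b}z^{b}$. Writing $k=n-a$ and $b=n-t-a$, and simplifying the powers of two via $4^{a}2^{b}=2^{\,2n-t-k}$ together with $\binom{n-a}{\,n-t-a}=\binom{n-a}{t}$, the coefficient collapses to exactly $\sum_{k=t}^{n}(-1)^{k}\binom{k}{t}\binom{t+1}{n-k}2^{\,2n-t-k}$; summing over $t$ recovers the claimed closed form for $g_n(\lambda)$.

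The main obstacle is the first step: summing the geometric tail and simplifying the rational expression to the compact form for $D(z)$, where sign and power-of-two bookkeeping is easy to mishandle. The decisive idea that makes everything afterward routine is spotting the factorization $(1+2z)+\lambda z(1+4z)$ of the denominator, which isolates $\lambda$ and lets both the $\lambda$-expansion and the $z$-coefficient extraction go through using only the two binomial series, with no further nontrivial identity required. As an alternative one could avoid generating functions and argue by induction on $n$ exactly as in Theorem \ref{thm:35}, substituting the claimed formula into the recurrence and collecting the coefficient of $\lambda^{t}$; this route is viable but forces a messier alternating binomial-sum identity, so I would prefer the generating-function argument.
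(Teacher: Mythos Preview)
Your proof is correct and lands on the very same generating function the paper obtains, namely
\[
D(z)=\frac{1+4z}{1+(2+\lambda)z+4\lambda z^{2}}=\frac{1+4z}{1+z\bigl(2+\lambda(1+4z)\bigr)},
\]
so the two arguments are close in spirit. The differences are in packaging. The paper reaches $D(z)$ via the Riordan Array machinery of Section~2 (encoding the recurrence in an auxiliary matrix $M$ and reading off $d(t)$ from its $Z$- and $A$-sequences), whereas you derive $D(z)$ directly by summing the recurrence; your route is more elementary and self-contained. For the coefficient extraction, the paper writes $d_1(z)=\sum_k(-z)^k\bigl(2+\lambda(1+4z)\bigr)^k$, expands with the binomial theorem, and only afterwards reorganizes by powers of $\lambda$; you instead exploit the factorization $(1+2z)+\lambda z(1+4z)$ to expand as a geometric series in $\lambda$ first, which hands you the $\lambda^t$-coefficient as $(-1)^t[z^{n-t}](1+4z)^{t+1}(1+2z)^{-(t+1)}$ and makes the remaining binomial bookkeeping (your substitution $k=n-a$, $b=k-t$) a one-liner. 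Both approaches use nothing beyond geometric and binomial series; yours just isolates the $\lambda$-dependence earlier, which trims the final reindexing step the paper needs.
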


\begin{proof}
	Consider the infinite matrix

\begin{equation*}
	M = \begin{pmatrix}
		2-\lambda & -2^3 & 2^5 & -2^7 & \cdots \\
		1 & 0 & 0 & 0 & \cdots \\
		0 & 1 & 0 & 0 & \cdots \\
		0 & 0 & 1 & 0 & \cdots \\
		0 & 0 & 0 & 1 & \cdots \\
		\vdots & \vdots & \vdots & \vdots & \ddots \\
	\end{pmatrix}
\end{equation*}

Let $ w_{0} $ be the vector $ (1,0,\ldots)^{\top} $, and let $ w_{i} $ be the vector whose first $ i $ entries are the first $ i $ characteristic polynomials $ g_{i}(\lambda) $, and the remaining ones are zero, i.e., $ w_{i} = (g_{i}(\lambda), g_{i-1}(\lambda), g_{i-2}(\lambda), \ldots, g_{1}(\lambda), 0, \ldots)^{\top} $. Then $ M \cdot w_{i} = w_{i+1} $. It follows that $ w_{n} $ is the first column of $ M^{n} $. We can now use the Riordan Array approach. The Z-sequence is $ \{2-\lambda, -8, 32, \ldots\} $ with  generating function $ Z(t) = -\lambda + \frac{2}{1+4t}$. The A-sequence is $ \{1,0,\ldots\} $ with generating function $ A(t) = 1 $. It follows that $ h(t) = 1 $ and

\begin{equation*}
	d(t) = \frac{1}{1-t(-\lambda + \frac{2}{1+4t})} = \frac{1+4t}{1+t(2+\lambda(1+4t))} = d_{1}(t)+4td_{1}(t),
\end{equation*}

where
\begin{equation*}
	d_{1}(t) = \frac{1}{1+t(2+\lambda(1+4t))}.
\end{equation*}

Then, by Equation (\ref{eq:4}),
\begin{multline}
\label{eq:35}
	d_{n,j} = [t^n]d(t)(th(t))^{j} = [t^n]d(t)t^{j} = [t^{n-j}]d(t) =  [t^{n-j}](d_{1}(t)+4td_{1}(t)) =\\
	= [t^{n-j}]d_{1}(t) + 4[t^{n-1-j}]d_{1}(t).
\end{multline}

Using the well-known identity
\begin{equation}
	\label{eq:5}
	\frac{1}{1+z} = \sum_{k=0}^{\infty}(-1)^{k}z^{k},
\end{equation}

we know that
\begin{equation*}
	d_{1}(t)=\sum_{k=0}^{\infty}(-1)^kt^{k}(2 + \lambda(1+4t))^{k}.
\end{equation*}

We apply the binomial theorem to $ (2 + \lambda(1+4t))^{k} $. We then only have to take the coefficient of $ t^{n-j} $ in $ d_{1}(t) $ for the left term of Equation (\ref{eq:35}) and the coefficient of $ t^{n-1-j} $ in $ d_{1}(t) $ for the right term of Equation (\ref{eq:35}). After some short calculations, this gives the formula
\begin{equation*}
	g_{n}(\lambda) = \sum_{k=0}^{n}\sum_{j=0}^{n}(-1)^{k}\binom{k}{j}\binom{k-j}{n-k}2^{2(n-k)+j}\lambda^{k-j} + \sum_{k=0}^{n}\sum_{j=0}^{n}(-1)^{k}\binom{k}{j}\binom{k-j}{n-k-1}2^{2(n-k)+j}\lambda^{k-j}.
\end{equation*}

We finally rewrite $ g_{n}(\lambda) = \sum_{t=0}^{n} c_{t}\lambda^{t} $, with $ c_{t} $ the coefficient of $ \lambda^{t} $. Since $ \lambda^{t} = \lambda^{k-j} $ we set $ j = k - t $ and get
\begin{equation*}
	g_{n}(\lambda) = \sum_{t=0}^{n}\left(\sum_{k=0}^{n}(-1)^{k}\binom{k}{k-t}\binom{t}{n-k}2^{2n-t-k} + \sum_{k=0}^{n}(-1)^{k}\binom{k}{k-t}\binom{t}{n-k-1}2^{2n-t-k}\right)\lambda^{t}.
\end{equation*}

Note that $ k \geq t $, so after some elementary operations, taking into account that $ \binom{t}{n-k}+\binom{t}{n-k-1} = \binom{t+1}{n-k} $, and the symmetry identity for the binomial coefficient $ \binom{k}{k-t} $, we finally get the formula

\begin{equation*}
	g_{n}(\lambda) = \sum_{t=0}^{n}\sum_{k=t}^{n}(-1)^k\binom{k}{t}\binom{t+1}{n-k}2^{2n-t-k}\lambda^{t}. \tag*{\qedhere}
\end{equation*}
\end{proof}\bigskip

The proof of the next corollary is obtained by applying Theorem \ref{thm:14} with $ a_{-1} = 2 $.

\begin{corollary}
	\label{thm:42}
	Let $ x=(x_{n-1},x_{n-2},\ldots,x_{0})^{\top} $ be an eigenvector associated to an eigenvalue $ \lambda $ of $ G_{n} $ for $ n \geq 2 $. Then the entries of the vector $ x $ are of the form:
	
	\begin{equation*}
		x_{i} = \left(-\frac{1}{2}\right)^{i}g_{i}(\lambda)x_{0},
	\end{equation*}
	
	where $ g_{i}(\lambda) $ is the characteristic polynomial of $ G_{i} $.
\end{corollary}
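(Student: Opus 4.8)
The plan is to obtain this statement as an immediate specialization of Theorem \ref{thm:14}, since the matrix $G_n$ from Theorem \ref{thm:38} is already presented in Hessenberg-Toeplitz form. First I would read off the defining sequence of that Toeplitz structure directly from the displayed matrix: the subdiagonal entry is $a_{-1} = 2$, the main-diagonal entry is $a_0 = 2$, and the superdiagonal entries are $a_m = 2^{m+1}$ for $m \geq 1$ (so that $a_1 = 4$, $a_2 = 8$, and so on). The only thing to check is that the $(p,q)$ entry of $G_n$ depends solely on the difference $q-p$, which is exactly the Hessenberg-Toeplitz hypothesis required by Theorem \ref{thm:14}; the condition $n \geq 2$ is inherited verbatim.

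Next I would settle the one small bookkeeping point needed to match the statement exactly. Because $G_n$ is Toeplitz, its leading $i \times i$ principal submatrix coincides with $G_i$ for every $i \leq n$. Consequently the characteristic polynomial $d_i(\lambda)$ of the $i$-th leading submatrix that appears in Theorem \ref{thm:14} is precisely $g_i(\lambda)$, the characteristic polynomial of $G_i$. This identification is what lets the generic symbol $d_i$ of Theorem \ref{thm:14} be rewritten as the specific $g_i$ named in the corollary.

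Finally I would substitute $a_{-1} = 2$ into the conclusion of Theorem \ref{thm:14}, obtaining
\[
	x_i = \left(\frac{-1}{a_{-1}}\right)^{i} d_i(\lambda)\, x_0 = \left(-\frac{1}{2}\right)^{i} g_i(\lambda)\, x_0,
\]
which is exactly the claimed form. I do not expect any genuine obstacle here: the full combinatorial and inductive work—establishing the eigenvector entries for an arbitrary Hessenberg-Toeplitz matrix by descending through the rows of $A_n x = \lambda x$ and invoking the determinant recurrence of Theorem \ref{thm:13}—has already been carried out once and for all in the proof of Theorem \ref{thm:14}. The corollary is merely its instantiation at $G_n$, so the only care required is correctly identifying the value $a_{-1} = 2$ and recognizing that the truncated characteristic polynomials are the $g_i(\lambda)$.
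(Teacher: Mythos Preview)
Your proposal is correct and follows exactly the paper's own approach: the paper simply states that the corollary is obtained by applying Theorem \ref{thm:14} with $a_{-1}=2$, which is precisely what you do, together with the obvious identification of the truncated characteristic polynomials with the $g_i(\lambda)$.
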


\section{Counting connected graphs}

In this section we treat plane connected graphs with vertices in convex position. Recall that a graph is \textit{connected} if there is a path between every pair of vertices. Figure \ref{fig:15} shows several connected graphs.

\subsection{Production matrix}

In this section, we present a production matrix for connected graphs where the degree of the root vertex is again defined based on visibility. As far as we know, production matrices for connected graphs have not been considered before. In general, for a given connected graph, we generate its children by using the following mapping. Consider a vertex $ p_{i+1} $ that is inserted in convex position between $ p_{1} $ and $ p_{i} $. Let, as before, $ V(p_{i+1}) = \{p_{1}=p_{i+1}^1, p_{i+1}^{2}, \ldots, p_{i+1}^{t}=p_{i}\}, \ t < i+1 $, be the sequence of visible vertices from the new vertex $ p_{i+1} $, ordered counter-clockwise. We obtain a connected graph by adding edges from $ p_{i+1} $ to all the visible vertices in the ordered sequence $ V' = \{p_{i+1}^{k}, \ldots, p_{i+1}^{j}\} $ with $ p_{i+1}^{\ell} \in V(p_{i+1}) $ for each $ k \leq \ell \leq j $, and removing a subset of the edges, such that both endpoints are in $ V' $. We need to connect $ p_{i+1} $ with consecutive vertices in $ V' $ so that the connected graphs are generated only once. An example of how we can apply the mapping to generate the children of a given connected graph can be seen in Figure \ref{fig:15}.

\begin{center}
	\includegraphics[scale=0.7]{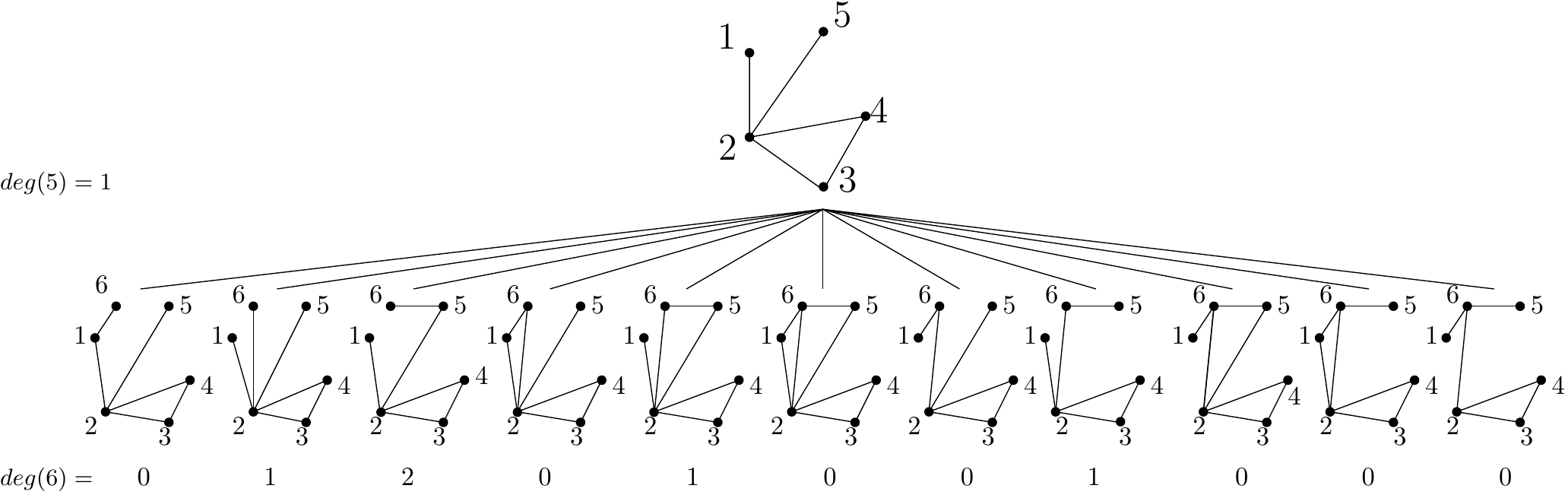}
	\captionof{figure}{Children of a given connected graph in the tree of connected graphs.}
	\label{fig:15}
\end{center}

\bigskip

\begin{theorem}
	\label{thm:6}
	The following $ n \times n $ matrix is a production matrix for plane connected graphs of point sets in convex position.
	
	\begin{equation*}
		C_{n} = \begin{pmatrix}
			3 & 7 & 15 & 31 & 63 & \cdots & 2^{n+1}-1 \\
			1 & 3 & 7 & 15 & 31 & \cdots & 2^{n}-1 \\
			0 & 1 & 3 & 7 & 15 & \cdots & 2^{n-1}-1 \\
			0 & 0 & 1 & 3 & 7 & \cdots & 2^{n-2}-1 \\
			0 & 0 & 0 & 1 & 3 & \cdots & 2^{n-3}-1 \\
			\vdots & \vdots & \vdots & \vdots & \vdots & \ddots & \vdots \\
			0 & 0 & 0 & 0 & 0 & \cdots & 3 \\
		\end{pmatrix}.
	\end{equation*}
	
\end{theorem}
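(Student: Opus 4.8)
The plan is to mirror the proof of Theorem~\ref{thm:38}. I would assume the vector $v^{i}$ counting connected graphs by the visibility degree of the root $p_{i}$ is known, insert a vertex $p_{i+1}$ between $p_{1}$ and $p_{i}$ in convex position, and read off the entries of $C_{n}$ from the relation $v^{i+1}=C_{n}v^{i}$. Concretely, a parent $G$ whose root $p_{i}$ has visibility degree $b-1$ has exactly $b+1$ vertices visible from $p_{i+1}$, say $V(p_{i+1})=\{p_{i+1}^{1}=p_{1},\ldots,p_{i+1}^{b+1}=p_{i}\}$. The entry in row $a$, column $b$ must equal the number of connected children $G'$ in which the new root $p_{i+1}$ has visibility degree $a-1$, produced by the mapping already described (join $p_{i+1}$ to a consecutive block of visible vertices and delete a subset of the edges lying inside that block). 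Since $C_{n}$ is Hessenberg--Toeplitz with $a_{j}=2^{\,j+2}-1$ for $j\ge -1$, it suffices to prove that this number equals $2^{\,b-a+2}-1$ for $a\le b$, equals $1$ on the subdiagonal $a=b+1$, and vanishes for $a>b+1$.

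First I would pin down the visibility transition. The claim is that $p_{i+1}$ acquires visibility degree $a-1$ precisely when the consecutive block to which $p_{i+1}$ is joined \emph{starts} at $p_{i+1}^{a}$. Geometrically, inserting $p_{i+2}$ between $p_{1}$ and $p_{i+1}$, the lowest edge $p_{i+1}p_{i+1}^{a}$ is the first blocker met when sweeping from $p_{i+1}$ toward $p_{1}$: it hides $p_{i+1}^{a+1},\ldots,p_{i}$ from $p_{i+2}$, while $p_{i+1}^{1},\ldots,p_{i+1}^{a-1}$ (which receive no edge from $p_{i+1}$) remain visible. Hence $p_{i+2}$ sees exactly the $a+1$ vertices $p_{i+1},p_{i+1}^{1},\ldots,p_{i+1}^{a}$, which is visibility degree $a-1$. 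Crucially, this count is unaffected by \emph{which} edges are deleted inside the block, since those edges are not incident to $p_{i+1}$ and lie above $p_{i+1}^{a}$, so they neither expose a higher vertex nor obscure a lower one.

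Next I would carry out the counting for a fixed starting vertex $p_{i+1}^{a}$. Letting the block be $\{p_{i+1}^{a},\ldots,p_{i+1}^{m}\}$ with $a\le m\le b+1$, the only edges of $G$ with both endpoints among these visible vertices are the $m-a$ ``blocking'' edges $p_{i+1}^{\ell}p_{i+1}^{\ell+1}$ joining consecutive visible vertices; any longer chord $p_{i+1}^{\ell}p_{i+1}^{\ell'}$ with $\ell'>\ell+1$ would hide the visible vertex $p_{i+1}^{\ell+1}$ from $p_{i+1}$, a contradiction. Each of these $m-a$ edges may be independently kept or deleted, and the graph stays connected because $p_{i+1}$ is adjacent to every block vertex (so any path through a deleted edge reroutes through $p_{i+1}$, while edges leaving the block are untouched). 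This yields $2^{\,m-a}$ connected children, and summing over the block length gives
\begin{equation*}
	\sum_{m=a}^{b+1}2^{\,m-a}=2^{\,b-a+2}-1,
\end{equation*}
exactly the asserted Toeplitz entry. The subdiagonal case $a=b+1$ forces $m=b+1$, giving $2^{0}=1$, and $a>b+1$ admits no valid block, giving $0$; this reproduces $C_{n}$.

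The hard part will be making the two geometric assertions fully rigorous and verifying that the mapping generates each connected child exactly once. In particular, the count $2^{\,m-a}$ relies on the fact that all $m-a$ consecutive-pair edges are \emph{present} in $G$, i.e.\ that consecutive visible vertices of a connected plane graph are always joined by an edge; establishing this (it seems to use connectivity essentially, to rule out $p_{i+1}$ seeing ``past'' a missing hull edge) is the crux. I would also need a clean inverse map---delete $p_{i+1}$ and restore the consecutive-pair edges among its former neighbours---and check that it recovers a unique parent $G$, so that the forward map is a bijection from $v^{i}$-graphs with root degree $b-1$ onto their children. Finally, I would validate the first vectors $v^{2},v^{3},\ldots$ against the known counts of connected plane graphs on convex point sets as a sanity check before trusting the general argument.
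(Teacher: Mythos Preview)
Your proposal is correct and follows essentially the same route as the paper: fix the starting vertex $p_{i+1}^{a}$ of the consecutive block to control the new visibility degree, then sum $2^{m-a}$ over the block's endpoint to obtain $2^{b-a+2}-1$. The paper's proof is in fact less explicit than yours---it asserts the count $\sum_{r=0}^{s+1}2^{r}$ without isolating the lemma that consecutive visible vertices of a connected plane graph are always adjacent, which you rightly flag as the crux; so your outline, once that lemma and the bijection are filled in, would be at least as rigorous as the published argument.
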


\begin{proof}
	Assume that the vector $ v^{i} $, containing the number of connected graphs for each possible visibility degree of $ p_{i} $, is known. For $ v^{i+1} $, with $ i \leq n $, consider a vertex $ p_{i+1} $ inserted between $ p_{1} $ and $ p_{i} $ in convex position. From the definition of the generating tree of connected graphs, and the relation $ v^{i+1}=C_{n}v^{i} $ we can determine the entries of $ C_{n} $ as follows:

\begin{itemize}
	\item \textbf{First row}. For $ p_{i+1} $ to have visibility degree $ 0 $, an edge from $ p_{i+1} $ to $ p_{i+1}^{1} $ must be included. Thus, the number of connected graphs where $ p_{i+1} $ has visibility degree $ 0 $ is equal to the number of connected graphs where $ p_{i} $ has visibility degree $ s $, for $ 0 \leq s \leq n-2 $ when adding some edges from $ p_{i+1} $ to all the visible vertices from $ p_{1} $ to $ p_{i+1}^{j} \in V(p_{i+1}) $, where $ 1 \leq j \leq s+2 $, removing a subset of edges with both endpoints between $ p_{1} $ and $ p_{i+1}^{j} \in V(p_{i+1}) $. This gives a total number of $ \sum_{r=0}^{s+1}2^{r} = 2^{s+2} - 1 $ graphs, resulting in a first row of powers of $ 2 $ minus $ 1 $ in $ C_n $.
	
	\item \textbf{Second row}. The number of connected graphs where $ p_{i+1} $ has visibility degree $ 1 $ and $ p_{i} $ has visibility degree $ 0 $ is equal to the connected graph obtained by connecting $ p_{i+1} $ to $ p_{i} $, thus we get a one in the first column of the second row. As soon as $ p_{i} $ has visibility degree at least $ s \geq 1 $, we have to add one edge from $ p_{i+1} $ to $ p_{i+1}^{2} $, and then we add edges from $ p_{i+1} $ to all the visible vertices from $ p_{i+1}^{2} $ to $ p_{i+1}^{j} $ in $ V(p_{i+1}) $, where $ j \geq 2 $, removing a subset of edges with both endpoints between $ p_{i+1}^{2} $ and $ p_{i+1}^{j} \in V(p_{i+1}) $. This leads to $ \sum_{r=0}^{s}2^{r} = 2^{s+1} - 1 $ graphs. Thus, the rest of the row is made of powers of $ 2 $ minus $ 1 $.
	
	\item \textbf{Other rows}. The cases when $ |V(p_{i+1})| > |V(p_{i})|+1 $ are not possible, so we get a zero in these cases. The following rows are analogous, shifted by one column every time: in order for $ p_{i+1} $ to have visibility degree $ s $, one edge needs to be added from $ p_{i+1} $ to $ p_{i+1}^{s+1} \in V(p_{i+1}) $, and then we can add a subset of edges from $ p_{i+1} $ to all the visible vertices from $ p_{i+1}^{s+1} $ to $ p_{i+1}^{j} $ in $ V(p_{i+1}) $, where $ j \geq s+1 $, removing a subset of edges with both endpoints between $ p_{i+1}^{s+1} $ and $ p_{i+1}^{j} $ in $ V(p_{i+1}) $. So, we get a power of $ 2 $ minus $ 1 $ for each entry. \qedhere
\end{itemize}
\end{proof}

\bigskip

Let $ v^{i} $ be the vector of connected graphs (that is, $ v^{i} $ is the first column of a power of $ C_{n} $ for $ i \leq n $; the sum of the elements of $ v^{i} $ is the number of connected graphs). The first vectors are:

\begin{equation*}
	v^{2} = \begin{pmatrix}
		1 \\
		0 \\
		0 \\
		0 \\
		0 \\
		\vdots
	\end{pmatrix} \qquad v^{3} = 
	\begin{pmatrix}
		3 \\
		1 \\
		0 \\
		0 \\
		0 \\
		\vdots
	\end{pmatrix} \qquad v^{4} = 
	\begin{pmatrix}
		16 \\
		6 \\
		1 \\
		0 \\
		0 \\
		\vdots
	\end{pmatrix} \qquad v^{5} = 
	\begin{pmatrix}
		105 \\
		41 \\
		9 \\
		1 \\
		0 \\
		\vdots
	\end{pmatrix}
\end{equation*}

If we sum up all the entries of each vector $ v^{i} $, we can verify that the sequence of numbers is given as Sequence $ A007297 $ in the OEIS \textcolor{blue}{\cite{OEIS}}, obtaining the number of connected graphs with $ i $ labelled nodes on a circle with straight-line edges that do not cross.

\vspace{3.5mm}

Next, we give a formula for the entries of a vector $ v^{n} $ of connected graphs. Each number $ v_{j}^{n} $ counts the number of connected graphs with $ n $ points in convex position with root vertex of visibility degree $ j-1 $. The proof of the theorem is similar to the one of Theorem \ref{thm:32}, so it is deferred to Appendix \ref{A.2}.

\bigskip

\begin{theorem}
	\label{thm:7}
	The number of connected graphs with $ n $ vertices, where the root vertex has visibility degree $ j-1 $, $ \forall j=1,\ldots, n-1 $ is:
	\begin{equation*}
		v_{j}^{n} = \frac{j}{n-1}2^{n-1}\sum_{k=0}^{n-1}\binom{n-1}{k}\left(-\frac{1}{2}\right)^{k}\sum_{\ell= 0}^{n-j-1}\binom{n-2-k+\ell}{\ell}\binom{k+n-\ell-j-2}{n-\ell-j-1}2^{\ell}.
	\end{equation*}
\end{theorem}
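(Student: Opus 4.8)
The plan is to follow the same Riordan-array strategy used for Theorem \ref{thm:32}, the only genuinely new ingredient being that the A-sequence generating function of $C_n$ factors as a product of two geometric series rather than being a single power, which is what forces the final answer into a double sum. First I would read off the A- and Z-sequences of $C_n$. Since $C_n$ is upper Hessenberg-Toeplitz with subdiagonal $1$ and diagonals $3,7,15,31,\ldots$, its A-sequence is $\{1,3,7,15,\ldots\}$, i.e. $a_i = 2^{i+1}-1$, and its Z-sequence is $\{3,7,15,31,\ldots\}$, i.e. $z_i = 2^{i+2}-1$. Summing the geometric series gives
\[
	A(t) = \frac{2}{1-2t} - \frac{1}{1-t} = \frac{1}{(1-t)(1-2t)}, \qquad Z(t) = \frac{4}{1-2t} - \frac{1}{1-t} = \frac{3-2t}{(1-t)(1-2t)},
\]
with $d(0) = v_1^2 = 1$ read off from the initial vector $v^2 = (1,0,\ldots)^{\top}$.

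Next, using formula (\ref{eq:13}), I would show that the associated Riordan Array satisfies $d(t) = h(t)$. Writing $\omega = th(t)$, the equation $h(t) = A(th(t))$ becomes $h = \frac{1}{(1-\omega)(1-2\omega)}$, so that $t = \omega(1-\omega)(1-2\omega)$. Substituting this expression for $t$ into $d(t) = \frac{d(0)}{1-tZ(\omega)}$ collapses the factor $t/[(1-\omega)(1-2\omega)]$ to $\omega$, leaving $tZ(\omega) = \omega(3-2\omega)$ and hence $d(t) = \frac{1}{1-\omega(3-2\omega)} = \frac{1}{(1-\omega)(1-2\omega)} = h(t)$, using the factorization $1-3\omega+2\omega^2 = (1-\omega)(1-2\omega)$. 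With $d(t)=h(t)$, Theorem \ref{thm:3} (after the index shift $v_j^n = d_{n-2,j-1}$ appropriate to starting from $v^2$) yields $v_j^n = \frac{j}{n-1}[t^{n-1-j}]A(t)^{n-1}$.

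The step that distinguishes this proof from Theorem \ref{thm:32} is the decomposition of $A(t)^{n-1}$. Rather than extract the coefficient directly (which would give a single convolution sum), I would use the partial-fraction form $A(t) = \frac{2}{1-2t} - \frac{1}{1-t}$ together with the binomial theorem to write
\[
	A(t)^{n-1} = \sum_{k=0}^{n-1}\binom{n-1}{k}\Bigl(\frac{-1}{1-t}\Bigr)^{k}\Bigl(\frac{2}{1-2t}\Bigr)^{n-1-k} = \sum_{k=0}^{n-1}\binom{n-1}{k}2^{n-1}\Bigl(-\tfrac{1}{2}\Bigr)^{k}\frac{1}{(1-2t)^{n-1-k}(1-t)^{k}}.
\]
Extracting $[t^{n-1-j}]$ term by term, I would expand $\frac{1}{(1-2t)^{n-1-k}}$ and $\frac{1}{(1-t)^{k}}$ via identity (\ref{eq:8}) and take their Cauchy product over the splitting $\ell + m = n-1-j$. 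With $m = n-j-1-\ell$ this produces exactly the inner sum $\sum_{\ell}\binom{n-2-k+\ell}{\ell}\binom{k+n-\ell-j-2}{n-\ell-j-1}2^{\ell}$, and multiplying by $\frac{j}{n-1}$ gives the claimed formula.

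I expect the main obstacle to be the verification $d(t)=h(t)$: it requires correctly carrying the substitution $\omega = th(t)$ through the Z-sequence and recognizing the cancellation $1-\omega(3-2\omega) = (1-\omega)(1-2\omega)$, which is what makes the whole Riordan-array machinery applicable in the clean form of Theorem \ref{thm:3}. The subsequent coefficient extraction is routine bookkeeping, but some care is needed with the index ranges so that the upper bound $\ell \le n-j-1$ of the inner sum comes out correctly from the convolution. As a sanity check I would confirm the result against the computed vectors, for instance $v^4 = (16,6,1,0,\ldots)^{\top}$, which the formula reproduces.
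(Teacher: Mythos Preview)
Your proposal is correct and follows essentially the same approach as the paper's proof: identify the A- and Z-sequences of $C_n$, verify $d(t)=h(t)$ via the substitution $\omega = th(t)$ and the factorization $1-3\omega+2\omega^2=(1-\omega)(1-2\omega)$, invoke Theorem~\ref{thm:3} with the index shift to reduce to extracting $[t^{n-1-j}]A(t)^{n-1}$, and then use the partial-fraction form $A(t)=\frac{2}{1-2t}-\frac{1}{1-t}$ together with the binomial theorem and identity~(\ref{eq:8}) to obtain the double sum. The only differences are presentational (you write $t=\omega(1-\omega)(1-2\omega)$ explicitly, the paper carries $h(t)$ through the fraction), not substantive.
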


\subsection{Characteristic polynomial}

Let $ c_n(\lambda) $ be the characteristic polynomial of $ C_n $. The sequence $ \{c_{n}(\lambda)\} $ starts with

\begin{equation*}
	\begin{tabular}{c|c}
		$ n = 1 $ & $ -\lambda+3 $ \\
		$ n = 2 $ & $ \lambda^{2}-6\lambda+2 $ \\ 
		$ n = 3 $ & $ -\lambda^{3}+9\lambda^{2}-13\lambda $ \\ 
		$ n = 4 $ & $ \lambda^{4}-12\lambda^{3}+33\lambda^{2}-12\lambda $ \\
		$ n = 5 $ & $ -\lambda^{5}+15\lambda^{4}-62\lambda^{3}+63\lambda^{2}-4\lambda $ \\
		$ n = 6 $ & $ \lambda^{6}-18\lambda^{5}+100\lambda^{4}-180\lambda^{3}+66\lambda^{2} $
	\end{tabular}
\end{equation*}

\begin{corollary}
	\label{thm:43}
	The characteristic polynomial $ c_{n}(\lambda) $ of the matrix $ C_{n} $ satisfies the recurrence relation
	\begin{equation*}
		c_{n}(\lambda) = (3-\lambda)c_{n-1}(\lambda) - \sum_{i=2}^{n}(-1)^{i}(2^{i+1}-1)c_{n-i}(\lambda).
	\end{equation*}
\end{corollary}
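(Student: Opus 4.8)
The plan is to obtain this recurrence as an immediate instance of Theorem \ref{thm:13}, exactly in the spirit of the analogous Corollaries \ref{thm:34} and \ref{thm:40}. First I would observe that $C_n$ is an Upper Hessenberg-Toeplitz matrix in the sense of Section $2.2$, and simply read off its defining constants from the displayed matrix: the subdiagonal entry is $a_{-1} = 1$, the main-diagonal entry is $a_0 = 3$, and the superdiagonal constants are $a_1 = 7$, $a_2 = 15$, $a_3 = 31$, and in general $a_{i-1} = 2^{i+1}-1$ for $i \geq 2$ (so that the first row reads $3, 7, 15, 31, \ldots, 2^{n+1}-1$, matching the powers-of-two-minus-one pattern established in the proof of Theorem \ref{thm:6}).

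Next I would substitute these values into the recurrence of Theorem \ref{thm:13}, namely
\begin{equation*}
	d_n(\lambda) = (a_0-\lambda)d_{n-1}(\lambda) + \sum_{i=2}^{n}(-1)^{i+1}a_{i-1}a_{-1}^{i-1}d_{n-i}(\lambda),
\end{equation*}
taking $d_n = c_n$. Since $a_{-1} = 1$, the factor $a_{-1}^{i-1}$ equals $1$ and drops out, so each summand becomes $(-1)^{i+1}(2^{i+1}-1)c_{n-i}(\lambda)$. Rewriting the sign via $(-1)^{i+1} = -(-1)^i$ converts the sum into $-\sum_{i=2}^{n}(-1)^i(2^{i+1}-1)c_{n-i}(\lambda)$, which together with the leading term $(3-\lambda)c_{n-1}(\lambda)$ is precisely the claimed formula.

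There is essentially no obstacle here beyond bookkeeping: the only point requiring care is correctly indexing the superdiagonal constants so that $a_{i-1}$ in Theorem \ref{thm:13} lines up with the $i$-th entry of the first row of $C_n$, and confirming that the convention $c_0(\lambda)=1$ is consistent with the small cases tabulated just before the corollary. A quick sanity check against those listed polynomials confirms the identification: for $n=2$ one computes $c_2(\lambda) = (3-\lambda)c_1(\lambda) - (2^3-1)c_0(\lambda) = (3-\lambda)^2 - 7 = \lambda^2 - 6\lambda + 2$, in agreement with the table, so the substitution is correct and the corollary follows.
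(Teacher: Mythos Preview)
Your proposal is correct and follows exactly the same approach as the paper: the paper simply states that Corollary~\ref{thm:43} can be derived from Theorem~\ref{thm:13} with $a_{-1}=1$, $a_0=3$, and $a_{i-1}=2^{i+1}-1$, which is precisely the substitution you carry out. Your additional sanity check against the tabulated $c_2(\lambda)$ is a nice touch but not strictly necessary.
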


Corollary \ref{thm:43} can be derived from Theorem \ref{thm:13} with $ a_{-1}=1, \ a_0=3 $, and $ a_{i-1} = 2^{i+1}-1 $.

\begin{theorem}
	\label{thm:8}
	The solution of the recurrence relation
	\begin{equation*}
		c_{n}(\lambda) = (3-\lambda)c_{n-1}(\lambda) - \sum_{i=2}^{n}(-1)^{i}(2^{i+1}-1)c_{n-i}(\lambda)
	\end{equation*}
	with initial condition $ c_{0}(\lambda) = 1 $ is
	\begin{equation*}
		c_{n}(\lambda) = \sum_{t=0}^{n}\left(\sum_{\ell=0}^{t}\binom{t}{\ell}(-1)^{t}2^{t-\ell}3^{n+2\ell-3t-2}\left[2\binom{\ell}{n+2\ell-3t-2}+9\binom{\ell+1}{n+2\ell-3t}\right]\right) \lambda^{t}.
	\end{equation*}
\end{theorem}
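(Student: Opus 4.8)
The plan is to mirror the proof of Theorem~\ref{thm:41}. First I would encode the recurrence of Corollary~\ref{thm:43} by an infinite transfer matrix
\[
M = \begin{pmatrix}
3-\lambda & -7 & 15 & -31 & \cdots \\
1 & 0 & 0 & 0 & \cdots \\
0 & 1 & 0 & 0 & \cdots \\
0 & 0 & 1 & 0 & \cdots \\
\vdots & \vdots & \vdots & \vdots & \ddots
\end{pmatrix},
\]
whose first row is the Z-sequence $z_1=3-\lambda$, $z_i=(-1)^{i+1}(2^{i+1}-1)$ for $i\ge2$, and whose A-sequence is $\{1,0,\ldots\}$. Setting $w_i=(c_i(\lambda),c_{i-1}(\lambda),\ldots,c_1(\lambda),0,\ldots)^{\top}$ one checks $M w_i = w_{i+1}$, so $w_n$ is the first column of $M^n$; by Theorem~\ref{thm:5} with $d(0)=1$ this makes $c_n(\lambda)=d_{n,0}$ the top entry of $M^n\bar d_0$, i.e.\ an entry of the associated Riordan Array. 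Since $A(t)=1$, formula~(\ref{eq:13}) gives $h(t)=A(th(t))=1$, so by~(\ref{eq:4}) I only need the coefficient $c_n(\lambda)=[t^n]d(t)$.

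The second step is to determine $d(t)$ in closed form. Summing the Z-sequence yields $Z(t)=-\lambda+\frac{4}{1+2t}-\frac{1}{1+t}=-\lambda+\frac{3+2t}{(1+t)(1+2t)}$. Writing $P(t):=(1+t)(1+2t)=1+3t+2t^2$ and using the cancellation $P(t)-t(3+2t)=1$, formula~(\ref{eq:13}) gives $1-tZ(t)=\frac{1+\lambda t P(t)}{P(t)}$, hence $d(t)=\frac{P(t)}{1+\lambda t P(t)}$.

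For the coefficient extraction I would factor $d(t)=P(t)\,d_1(t)$ with $d_1(t)=\frac{1}{1+\lambda t P(t)}$ and expand $d_1(t)=\sum_{k\ge0}(-\lambda)^k t^k P(t)^k$ via~(\ref{eq:5}). Reading off $[t^n]$ and using $P=1+3t+2t^2$, the coefficient of $\lambda^t$ in $c_n(\lambda)$ equals $(-1)^t\big([t^{n-t}]P^t+3[t^{n-t-1}]P^t+2[t^{n-t-2}]P^t\big)$. The decisive move is to expand $P^t=\big((1+3t)+2t^2\big)^t$ by the binomial theorem and then $(1+3t)^{t-m}$ again, which gives $[t^s]P^t=\sum_{\ell}\binom{t}{\ell}2^{t-\ell}3^{s+2\ell-2t}\binom{\ell}{s+2\ell-2t}$. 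Substituting $s=n-t,\,n-t-1,\,n-t-2$ makes all three pieces share the prefactor $\binom{t}{\ell}2^{t-\ell}3^{n+2\ell-3t-2}$; the $s=n-t$ and $s=n-t-1$ pieces combine by Pascal's rule $\binom{\ell}{m}+\binom{\ell}{m-1}=\binom{\ell+1}{m}$ into $9\binom{\ell+1}{n+2\ell-3t}$, while the $s=n-t-2$ piece contributes $2\binom{\ell}{n+2\ell-3t-2}$, producing exactly the stated bracket. I expect the main obstacle to be precisely this grouping: expanding around $(1+3t)+2t^2$ rather than $1+(3t+2t^2)$ is what aligns the three extracted pieces at the same powers of $2$ and $3$ so that a single Pascal step closes the identity; a secondary technical point is evaluating $Z(t)$ in closed form and verifying the cancellation $P(t)-t(3+2t)=1$ that collapses $d(t)$.
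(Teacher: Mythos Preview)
Your proposal is correct and follows essentially the same route as the paper's proof: the same transfer matrix $M$, the same Riordan Array setup with $A(t)=1$, $h(t)=1$, $Z(t)=-\lambda+\frac{4}{1+2t}-\frac{1}{1+t}$, the same closed form $d(t)=\frac{P(t)}{1+\lambda t P(t)}$ with $P(t)=1+3t+2t^2$, and the same double binomial expansion of $P^k=\big((1+3t)+2t^2\big)^k$. Your write-up is in fact slightly more explicit than the paper's, which hides the final Pascal-rule merge of the $s=n-t$ and $s=n-t-1$ contributions behind ``after some elementary operations''; your identification of that step (both pieces carrying the factor $9\cdot 3^{n+2\ell-3t-2}$ so that $\binom{\ell}{m}+\binom{\ell}{m-1}=\binom{\ell+1}{m}$ applies) is exactly what produces the bracket $2\binom{\ell}{n+2\ell-3t-2}+9\binom{\ell+1}{n+2\ell-3t}$.
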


\bigskip

The proof of Theorem \ref{thm:8} is similar to the one of Theorem \ref{thm:41}, so it is deferred to Appendix \ref{B.1}. To conclude, we give a formula for the entries of an eigenvector $ x $ associated to the eigenvalue $ \lambda $ of $ C_{n} $. We derive Corollary \ref{thm:9} from Theorem \ref{thm:14} with $ a_{-1} = 1 $.

\begin{corollary}
	\label{thm:9}
	Let $ x=(x_{n-1},x_{n-2},\ldots,x_{0})^{\top} $ be an eigenvector associated to an eigenvalue $ \lambda $ of $ C_{n} $ for $ n \geq 2 $. Then the entries of the vector $ x $ are of the form:
	
	\begin{equation*}
		x_{i} = (-1)^{i}c_{i}(\lambda)x_{0},
	\end{equation*}
	
	where $ c_{i}(\lambda) $ is the characteristic polynomial of $ C_{i} $.
\end{corollary}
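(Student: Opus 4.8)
The plan is to obtain the result as an immediate specialization of Theorem \ref{thm:14}, since the production matrix $C_n$ is by construction Upper Hessenberg-Toeplitz: its entries depend only on the difference of the row and column indices, it has a single nonzero subdiagonal, and all entries strictly below that subdiagonal vanish. Thus the hypotheses of Theorem \ref{thm:14} are satisfied for $A_n = C_n$, and the only thing I need to pin down carefully is the value of the subdiagonal parameter $a_{-1}$.

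Reading off the matrix $C_n$, the diagonal is constant equal to $3$ (so $a_0 = 3$), the superdiagonal entries follow the pattern $2^{m+1}-1$ (giving $a_{i-1} = 2^{i+1}-1$, exactly as already recorded in the derivation of Corollary \ref{thm:43}), and crucially every subdiagonal entry equals $1$, so $a_{-1} = 1$. Substituting $a_{-1} = 1$ into the conclusion of Theorem \ref{thm:14}, namely $x_i = (-1/a_{-1})^i d_i(\lambda) x_0$, immediately yields $x_i = (-1)^i c_i(\lambda) x_0$, where $d_i = c_i$ is the characteristic polynomial of the leading $i \times i$ principal submatrix $C_i$. This establishes the claimed form for all $i = 1, \ldots, n-1$.

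Because the argument is a direct appeal to an already-proved theorem, there is no genuine obstacle; the only point requiring attention is the correct identification of $a_{-1}$ from the Toeplitz data of $C_n$, which one must read off from the subdiagonal rather than mistakenly from the diagonal. For completeness I note that the statement could instead be reproved from scratch by the same induction used in Theorem \ref{thm:14}: the base case $x_1 = -c_1(\lambda) x_0$ follows from expanding the second row of $C_n x = \lambda x$, and the inductive step combines the row-by-row eigenvalue equations with the recurrence for $c_n(\lambda)$ supplied by Corollary \ref{thm:43}. This self-contained route merely duplicates the general proof specialized to $a_{-1} = 1$, so the cleaner approach is simply to invoke Theorem \ref{thm:14}.
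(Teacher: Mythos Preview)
Your proposal is correct and is exactly the paper's approach: the paper states that Corollary \ref{thm:9} is derived from Theorem \ref{thm:14} with $a_{-1}=1$, which is precisely what you do. Your additional remarks about the Hessenberg--Toeplitz structure and the alternative self-contained induction are accurate but unnecessary for the proof itself.
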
\bigskip

\subsection{Relation between geometric graphs and connected geometric graphs}

In this section we establish a relation between the class of connected geometric graphs and the class of all geometric graphs. To that end, we start by creating a production matrix for geometric graphs whose entries are a function of the numbers $ c_{i} $ of connected graphs with $ i $ vertices.

It will be convenient to use a different definition of the root vertex degree. Let $ \{p_{1}, p_{2},\ldots, p_{n}\} $ be the set of vertices of a geometric graph, ordered counter-clockwise, in convex position. The \textit{isolation degree} of the root vertex $ p_{n} $ is defined as the number of isolated visible vertices from a vertex $ p_{n+1} $ inserted between $ p_{1} $ and $ p_{n} $ in convex position.

In general, for a given geometric graph, we generate its children by using the following mapping. Consider a vertex $ p_{i+1} $ that is inserted in convex position between $ p_1 $ and $ p_i $. Let $ I(p_{i+1})= \{p_1=p_{i+1}^{1}, p_{i+1}^{2}, \ldots, p_{i+1}^{t}=p_i\}, \ t < i+1 $, be the sequence of isolated visible vertices from $ p_{i+1} $, ordered counter-clockwise. We obtain a geometric graph by leaving the new root vertex isolated or by creating one connected component with the new root vertex $ p_{i+1} $ and a subset of vertices from $ I(p_{i+1}) $. The rest of the vertices from $ I(p_{i+1}) $ that do not belong to such a component remain isolated. An example of how we generate the children of a given geometric graph is shown in Figure \ref{fig:16}.

\begin{center}
	\includegraphics[scale=0.65]{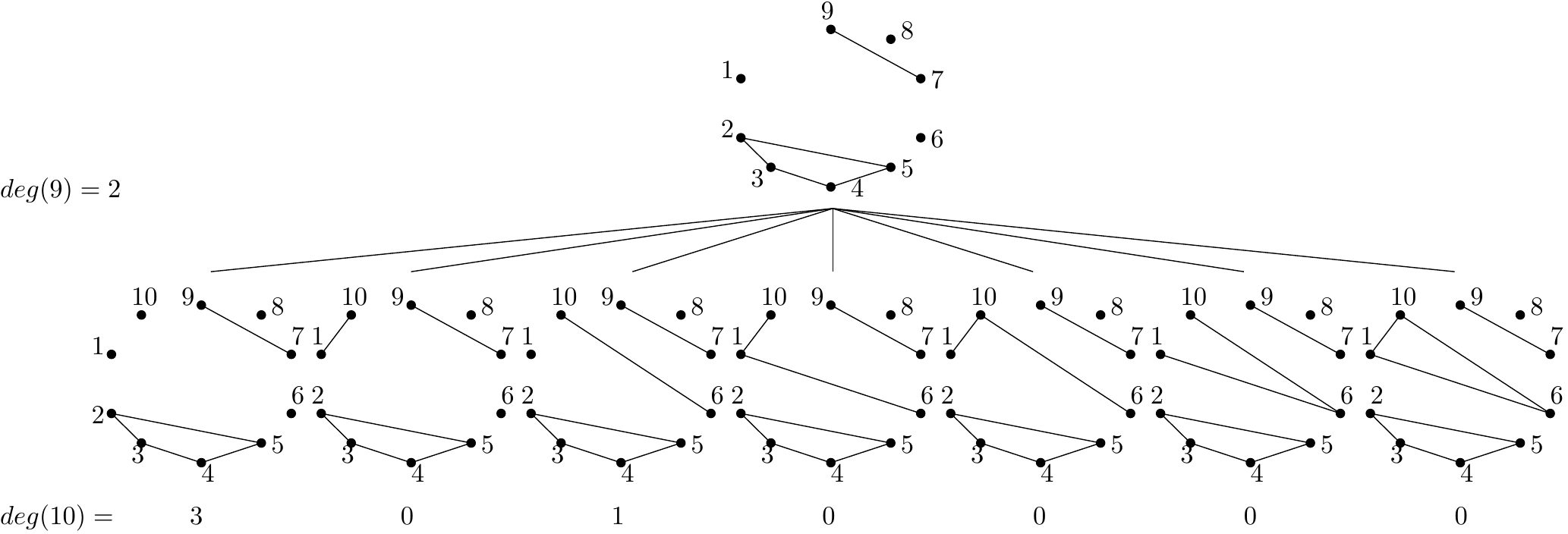}
	\captionof{figure}{Children of a given geometric graph in the tree of geometric trees.}
	\label{fig:16}
\end{center}

\bigskip

\begin{theorem}
	\label{thm:10}
	The following $ n \times n $ matrix is a production matrix for plane geometric graphs of point sets in convex position.
	
	\begin{equation*}
		R_{n} = \begin{pmatrix}
			0 & a_{2} & a_{3} & a_{4} & a_{5} & \cdots & a_{n} \\
			1 & 0 & a_{2} & a_{3} & a_{4} & \cdots & a_{n-1} \\
			0 & 1 & 0 & a_{2} & a_{3} & \cdots & a_{n-2} \\
			0 & 0 & 1 & 0 & a_{2} & \cdots & a_{n-3} \\
			0 & 0 & 0 & 1 & 0 & \cdots & a_{n-4} \\
			\vdots & \vdots & \vdots & \vdots & \vdots & \ddots & \vdots \\
			0 & 0 & 0 & 0 & 0 & \cdots & 0
		\end{pmatrix}.
	\end{equation*}
	
	where $ a_{j} = \sum_{i=2}^{j} \binom{j-2}{i-2}c_{i} $ for $ j \geq 2 $ and $ c_{i} $ is the number of connected graphs with $ i $ vertices for $ i=2,\ldots, n $.
	
\end{theorem}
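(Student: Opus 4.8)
The plan is to verify the production relation $v^{i+1}=R_n v^i$ directly from the generating tree, exactly as in the proofs of Theorems \ref{thm:38} and \ref{thm:6}: assuming $v^i$ records the geometric graphs on $\{p_1,\dots,p_i\}$ sorted by the isolation degree of the root $p_i$, I insert $p_{i+1}$ between $p_1$ and $p_i$ and count, for a parent whose root has isolation degree $s$, how many children have each isolation degree $m$. Writing the parent at column $j=s+1$, it suffices to show that this child-count equals the $(m+1,j)$ entry of $R_n$, i.e.\ $1$ when $m=s+1$, $0$ when $m=s$ or $m>s+1$, and $a_{s-m+1}$ when $0\le m\le s-1$. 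Here $|I(p_{i+1})|=s$, and I will write $I(p_{i+1})=\{p_{i+1}^1,\dots,p_{i+1}^{s}\}$ ordered counter-clockwise, with $p_{i+1}^1$ the isolated visible vertex nearest $p_1$ and $p_{i+1}^{s}=p_i$ nearest $p_{i+1}$.

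The key classification is by the component created at $p_{i+1}$. Leaving $p_{i+1}$ isolated keeps all of $I(p_{i+1})$ isolated and visible and adds $p_{i+1}$ itself, giving child degree $s+1$ in exactly one way, which is the subdiagonal $1$. Otherwise $p_{i+1}$ lies in a connected component whose counter-clockwise-first vertex is some $p_{i+1}^a$; I will show (visibility lemma below) that then exactly $p_{i+1}^1,\dots,p_{i+1}^{a-1}$ remain isolated and visible from $p_{i+2}$, so the child has degree $m=a-1$. Thus to realise a child of degree $m=s-k$ with $k\ge1$ I must take $p_{i+1}^{a}=p_{i+1}^{s-k+1}$ as the first component vertex, freely include any $\ell-1$ of the $k-1$ vertices $p_{i+1}^{s-k+2},\dots,p_{i+1}^{s}$ lying between it and $p_{i+1}$, and join these $\ell+1$ points by an arbitrary plane connected graph. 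Since the chosen points are in convex position this last choice can be made in $c_{\ell+1}$ ways, independently of which points were selected, so the number of children of degree $s-k$ is
\begin{equation*}
\sum_{\ell=1}^{k}\binom{k-1}{\ell-1}c_{\ell+1}=\sum_{i=2}^{k+1}\binom{k-1}{i-2}c_i=a_{k+1}=a_{s-m+1},
\end{equation*}
exactly the above-diagonal entry. As there is no component vertex beyond $p_{i+1}^{s}$, degree $m=s$ is unattainable (matching the diagonal $0$, consistently with $a_1=0$), and degree $m>s+1$ is impossible; this accounts for every entry, including the special first-row entries $0$ and $a_j$ (parent degree $j-1$, child degree $0$, so $k=j-1$).

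The heart of the argument, and the step I expect to be most delicate, is the visibility lemma: after building the component with first vertex $p_{i+1}^a$, the isolated vertices visible from $p_{i+2}$ are precisely $p_{i+1}^1,\dots,p_{i+1}^{a-1}$. For the unchosen middle vertices I would argue that, since the component is connected and contains both $p_{i+1}^a$ and $p_{i+1}$, any isolated $p_{i+1}^b$ with $a<b$ lying counter-clockwise before $p_{i+1}$ is ``jumped over'' by some component edge $(u,w)$ with $u$ before and $w$ after $p_{i+1}^b$; in convex position this chord separates $p_{i+1}^b$ from $p_{i+2}$, so the segment $p_{i+2}p_{i+1}^b$ crosses it and $p_{i+1}^b$ is hidden. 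Dually, every component chord joins two vertices of index $\ge a$, hence is nested inside $p_{i+2}p_{i+1}^{a'}$ for $a'<a$ and cannot block it, so $p_{i+1}^1,\dots,p_{i+1}^{a-1}$ stay visible (monotonicity of endpoint visibility also guarantees they are unaffected by the pre-existing edges of the parent). Finally I would record that the reverse map, deleting the component of $p_{i+1}$ together with $p_{i+1}$, recovers a unique parent, so every geometric graph on $i+1$ vertices appears exactly once, certifying that the generating tree, and hence $R_n$, is valid.
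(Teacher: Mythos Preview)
Your proof is correct and follows essentially the same approach as the paper: both insert $p_{i+1}$, distinguish the case where it is left isolated (giving the subdiagonal $1$) from the case where it is joined to a connected block whose first vertex in $I(p_{i+1})$ is $p_{i+1}^{s+1}$, and then count $\sum\binom{t-s-1}{i-2}c_i=a_{t-s+1}$ choices for the block. Your write-up is in fact more careful than the paper's, since you spell out the visibility lemma (why exactly $p_{i+1}^{1},\dots,p_{i+1}^{a-1}$ survive as isolated visible vertices in the child) and the inverse map; the paper simply asserts the entry values. One small slip: you label $p_{i+1}^{s}=p_i$, but $p_i$ need not be isolated and so need not lie in $I(p_{i+1})$---this is harmless, as nothing in your argument actually uses it.
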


\begin{proof}
	Assume that the vector $ v^{i} $, containing the number of connected graphs for each possible isolation degree of $ p_{i} $, is known. For $ v^{i+1} $, with $ i \leq n $, consider a vertex $ p_{i+1} $ inserted between $ p_{1} $ and $ p_{i} $ in convex position. From the definition of the generating tree and the relation $ v^{i+1}=R_{n}v^{i} $ we can determine the entries of $ R_{n} $ as follows:

\begin{itemize}
	\item \textbf{Main diagonal}. Observe first that, when adding the vertex $ p_{i+1} $ to a geometric graph with isolation degree of the root vertex $ p_{i} $ equal to $ s $, we cannot obtain a geometric graph with root vertex of isolation degree $ s $. This gives a main diagonal of $ 0 $s in $ R_{n} $.
	
	\item \textbf{First subdiagonal}. We may obtain one geometric graph with $ |I(p_{i+1})| = s+1 $ from a geometric graph with $ |I(p_{i})| = s $ by leaving the new vertex isolated. This gives the first subdiagonal of $ 1 $s.

	\item \textbf{Other diagonals}. The cases where $ |I(p_{i+1})| > |I(p_{i})|+1 $ are not possible, so we get a $ 0 $ in these cases. In general, for any geometric graph with $ i $ vertices, we can decide whether to keep $ p_{i+1} $ isolated or to connect it to a subset of vertices of $ I(p_{i+1}) $ as follows: if we want  $ p_{i+1} $ to have isolation degree $ s $, with $ s < t $, we have to connect $ p_{i+1} $ to $ p_{i+1}^{s+1} \in I(p_{i+1}) $ in $ c_2 $ different ways, and then we have $ \binom{t-s-1}{1} c_{3} $ ways of connecting $ p_{i+1} $ and $ p_{i+1}^{s+1} $ to $ p_{i+1}^{j} $, for each $ j> s+1 $ in $ I(p_{i+1}) $, we also have $ \binom{t-s-1}{2}c_{4} $ ways of connecting $ p_{i+1} $ and $ p_{i+1}^{s+1} $ to a pair of vertices $ p_{i+1}^{j}, p_{i+1}^{k} $, for each pair of $ j, \ k $ such that $ s+1 < j < k $ in $ I(p_{i+1}) $, and so on. This yields the claimed matrix. \qedhere
\end{itemize}
\end{proof}
\bigskip

Substituting each entry $ c_i $ by the number of connected graphs on $ i $ vertices gives us a new production matrix that enumerates the number of geometric graphs, see Table \ref{fig:14}(a). In addition, if we substitute in $ R_n $ each entry $ c_i $ by the number of spanning trees on $ i $ vertices, we obtain the matrix in Table \ref{fig:14}(b), which is a production matrix for non-crossing forests, or if we substitute in $ R_n $ the entries $ c_i $ by the number of spanning paths with $ i $ vertices, we obtain the matrix in Table \ref{fig:14}(c), which is a production matrix for non-crossing forest of paths.

\section{Counting non-crossing partitions}

Given a set of $ n $ elements $ [n] = \{1, 2, \ldots, n\} $, a partition of $ [n] $ is a family of nonempty, pairwise disjoint sets $ B_{1}, B_{2}, \ldots, B_{k} $, called blocks, whose union is the $ n $-element set, see graphs from Figure \ref{fig:20} and recall that each cycle in a graph represents a block. In our case, we treat partitions with vertices in convex position. A partition of $ [n] $ is non-crossing if whenever four elements, $ 1 \leq a < b < c < d \leq n $, are such that if $ a, c $ are in the same block and $ b, d $ are in the same block, then the two blocks coincide.

\subsection{Production matrix}

There exist previous papers where a production matrix of non-crossing partitions has been given \textcolor{blue}{\cite{Geometric}}. The matrix given in \textcolor{blue}{\cite{Geometric}}, in fact, is the same as for triangulations, see Table \ref{fig:12}(a).

In this section we give a new production matrix for this type of graphs. Let $ \{p_{1},p_{2},\ldots,p_{n}\} $ be the set of vertices of a non-crossing  partition, ordered counter-clockwise, in convex position. While in \textcolor{blue}{\cite{Geometric}}, the degree of the root vertex $ p_{n} $ is defined as the number of blocks visible from the root vertex, here, we use the definition of isolation degree.

Let $ I(p_{i+1}) = \{p_{i+1}^{1}, p_{i+1}^{2}, \ldots, p_{i+1}^{t}\}, \ t < i+1 $, be the sequence of isolated visible vertices from $ p_{i+1} $, ordered counter-clockwise. For a given non-crossing partition, we generate its children by leaving the new root vertex $ p_{i+1} $ isolated or by connecting it with a subset of its visible vertices obtaining a new block between them. Figure \ref{fig:20} shows an example of how the children of a non-crossing partition are generated.

\begin{center}
	\includegraphics[scale=0.59]{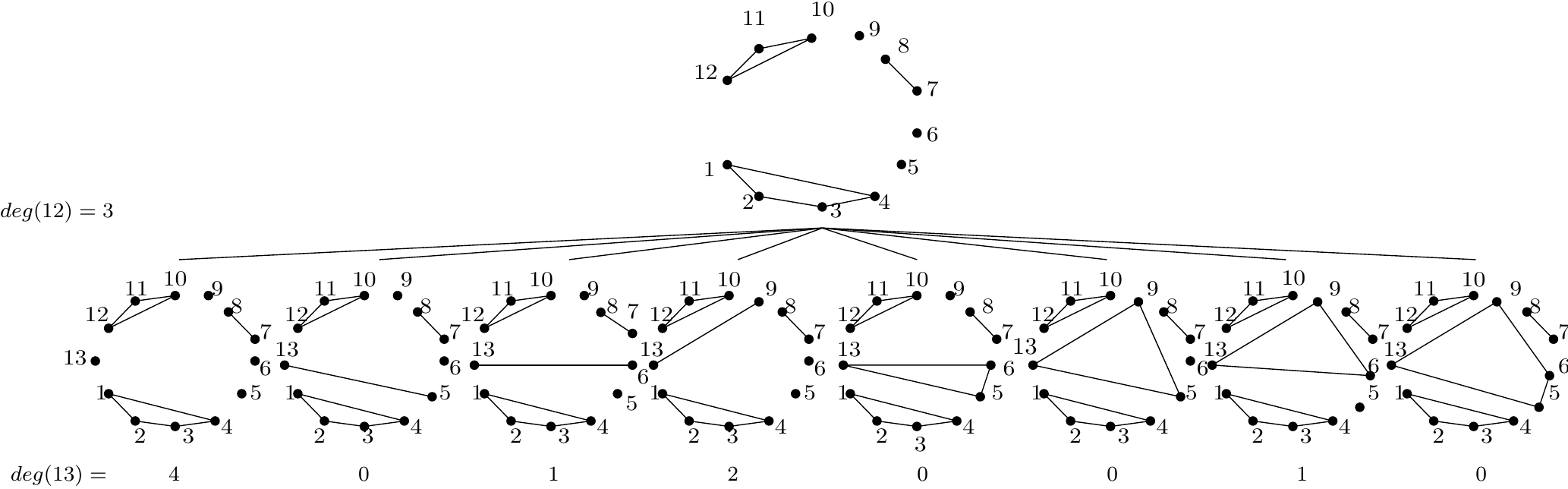}
	\captionof{figure}{Children of a given non-crossing partition in the tree of non-crossing partitions.}
	\label{fig:20}
\end{center}

\begin{theorem}
	\label{thm:1}
	The following $ n \times n $ matrix is a production matrix for non-crossing partitions of point sets in convex position.
	
	\begin{equation*}
		B_{n} = \begin{pmatrix}
			0 & 1 & 2 & 4 & 8 & \cdots & 2^{n-2} \\
			1 & 0 & 1 & 2 & 4 & \cdots & 2^{n-3} \\
			0 & 1 & 0 & 1 & 2 & \cdots & 2^{n-4} \\
			0 & 0 & 1 & 0 & 1 & \cdots & 2^{n-5} \\
			0 & 0 & 0 & 1 & 0 & \cdots & 2^{n-6} \\
			\vdots & \vdots & \vdots & \vdots & \vdots & \ddots & \vdots \\
			0 & 0 & 0 & 0 & 0 & \cdots & 0
		\end{pmatrix}.
	\end{equation*}
	
\end{theorem}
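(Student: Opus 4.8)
The plan is to mimic the structure of the proofs of Theorems~\ref{thm:10} and~\ref{thm:6}, reading off the entries of $B_n$ directly from the generating tree. Assume the vector $v^i$, recording the non-crossing partitions on $\{p_1,\dots,p_i\}$ classified by the isolation degree of the root $p_i$, is known, and insert a vertex $p_{i+1}$ between $p_i$ and $p_1$ in convex position. Writing $s=k-1$ for the isolation degree of the parent root, by definition the isolated visible vertices from $p_{i+1}$ form a ccw-ordered sequence $I(p_{i+1})=\{q_1,\dots,q_s\}$, with $q_1$ nearest $p_1$ and $q_s$ nearest $p_i$. Since $v^{i+1}=B_n v^i$, the entry $(B_n)_{j,k}$ must equal the number of children whose root $p_{i+1}$ has isolation degree $j-1$, produced from a fixed parent of isolation degree $k-1$; I would establish the matrix by computing this count for every pair $(j,k)$. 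As a sanity check, $B_n$ is exactly the specialization of $R_n$ from Theorem~\ref{thm:10} in which a block on $i$ vertices is formed in a unique way, i.e. $c_i=1$; this collapses $a_j=\sum_{i=2}^{j}\binom{j-2}{i-2}c_i$ to $2^{j-2}$, matching the first row.

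I would then split into the same cases as before. Leaving $p_{i+1}$ isolated adds one isolated visible vertex without disturbing any existing visibilities (an isolated vertex carries no edges), so the child has isolation degree $s+1$; this is the unique such child and yields the subdiagonal of $1$s, i.e. $(B_n)_{k+1,k}=1$. Every other child is obtained by placing $p_{i+1}$ together with a nonempty subset $S\subseteq\{q_1,\dots,q_s\}$ into a new block, drawn as the convex polygon on $\{p_{i+1}\}\cup S$. The crux is to show that such a child has isolation degree exactly $\min(S)-1$, where $\min(S)$ denotes the index of the first (lowest) element of $S$. Granting this, to realize a target child degree $s'<s$ one must take $\min(S)=s'+1$ and may adjoin any subset of $\{q_{s'+2},\dots,q_s\}$, giving $2^{s-s'-1}=2^{k-j-1}$ children and hence the upper-triangular entries; degree $s'=s$ is unattainable (forcing the $0$ main diagonal), and degrees exceeding $s+1$ are impossible since one insertion can raise the count by at most one (forcing the $0$s below the subdiagonal).

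The main obstacle is the visibility bookkeeping behind the claim that forming the block with first element $q_{a_1}$, $a_1=\min(S)$, leaves isolation degree $a_1-1$. The key geometric fact is that the polygon's closing edge $p_{i+1}q_{a_1}$ acts as a wall: inserting the next vertex $p_{i+2}$ between $p_{i+1}$ and $p_1$, I would invoke the standard convex-position criterion that two chords cross iff their endpoints interleave to verify that the segment $p_{i+2}q_\ell$ meets the interior of $p_{i+1}q_{a_1}$ precisely when $\ell>a_1$. Thus every still-isolated $q_\ell$ with $\ell>a_1$ (those not absorbed into $S$) is hidden from $p_{i+2}$, while the vertices $q_1,\dots,q_{a_1-1}$ remain both isolated and visible; the remaining edges of the block, joining vertices of position at least $a_1$, are checked by the same criterion to create no additional blocking. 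It remains to argue that no vertex invisible in the parent becomes visible in the child: since $p_{i+2}$ is adjacent to $p_{i+1}$ with no vertex between them, and the only new edges are those of the block $B$ (which can only add obstructions), the isolated visible set from $p_{i+2}$ is contained in that from $p_{i+1}$, so it is exactly $\{q_1,\dots,q_{a_1-1}\}$. This geometric step, rather than the counting, is where the care is needed.
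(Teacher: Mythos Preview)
Your proposal is correct and follows essentially the same approach as the paper: determine each entry $(B_n)_{j,k}$ by counting how many children of isolation degree $j-1$ arise from a parent of isolation degree $k-1$, splitting into the cases ``leave $p_{i+1}$ isolated'' (subdiagonal) versus ``form a new block with $p_{i+1}$ and a subset of the isolated visible vertices'' (upper triangle), with the child's degree determined by the lowest-indexed element of that subset. Your treatment is in fact more careful than the paper's, which asserts the visibility facts without the interleaving argument you supply; the observation that $B_n$ is the $c_i\equiv 1$ specialization of $R_n$ is also a pleasant addition not in the paper.
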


\begin{proof}
	Assume that the vector $ v^{i} $, containing the number of non-crossing partitions for each possible isolation degree of $ p_{i} $, is known. For $ v^{i+1} $, with $ i \leq n $, consider a vertex $ p_{i+1} $ inserted between $ p_{1} $ and $ p_{i} $ in convex position. From the definition of the generating tree and the relation $ v^{i+1}=B_{n}v^{i} $ we can determine the entries of $ B_{n} $ as follows:

\begin{itemize}
	\item \textbf{Main diagonal}. Observe first that, when adding the point $ p_{i+1} $ to a non-crossing partition with isolation degree of the root vertex $ p_{i} $ equal to $ s $, we cannot obtain a non-crossing partition with root vertex of isolation degree $ s $. This gives a main diagonal of $ 0 $s in $ B_{n} $.
	
	\item \textbf{First subdiagonal}. If $ |I(p_{i+1})| = |I(p_{i})|+1 $, we obtain one non-crossing partition by leaving the new root vertex $ p_{i+1} $ isolated. This gives a first subdiagonal of $ 1 $s in $ B_n $.
	
	\item \textbf{First row}. The number of non-crossing partitions where $ p_{i+1} $ has isolation degree $ 0 $ is equal to the number of non-crossing partitions where $ p_{i} $ has isolation degree $ s $, for $ 0 \leq s \leq n $. Those partitions are obtained by adding an edge from $ p_{i+1} $ to $ p_{1} $; and then, we can add a subset of isolated visible vertices from $ p_{i+1} $ to the block containing $ p_{i+1} $. We have $ \binom{s-1}{0} $ ways of producing a block of size $ 2 $ with the vertices $ p_1 $ and $ p_{i+1} $, $ \binom{s-1}{1} $ blocks of size $ 3 $ with $ p_1, \ p_{i+1} $ and one vertex of $ I(p_{i+1}) $, and so on. This results in $ \sum_{j=0}^{s-1}\binom{s-1}{j} = 2^{s-1} $ graphs. This gives a first row of powers of $ 2 $ in $ B_n $.
	
	\item \textbf{Other rows}. The cases where $ |I(p_{i+1})| > |I(p_{i})|+1 $ are not possible, so we get a zero in these cases. The following rows are analogous, shifted by one column every time: in order for $ p_{i+1} $ to have isolation degree $ s $, one edge needs to be added from $ p_{i+1} $ to $ p_{i+1}^{s+1} $, and then we connect $ p_{i+1} $ to a subset of its isolated visible vertices obtaining a block. So we get a power of $ 2 $ for each entry.\qedhere
\end{itemize}
\end{proof}

\bigskip

Let $ v^{i} $ be a vector of non-crossing partitions (that is, $ v^{i} $ is the first column of a power of $ B_{n} $ for $ i \leq n $; the sum of the elements of $ v^{i} $ is the number of non-crossing partitions). The first vectors are:

\begin{equation*}
	v^{1} = 
	\begin{pmatrix}
		0 \\
		1 \\
		0 \\
		0 \\
		0 \\
		\vdots
	\end{pmatrix} \qquad v^{2} = 
	\begin{pmatrix}
		1 \\
		0 \\
		1 \\
		0 \\
		0 \\
		\vdots
	\end{pmatrix} \qquad v^{3} = 
	\begin{pmatrix}
		2 \\
		2 \\
		0 \\
		1 \\
		0 \\
		\vdots
	\end{pmatrix} \qquad v^{4} = 
	\begin{pmatrix}
		6 \\
		4 \\
		3 \\
		0 \\
		1 \\
		\vdots
	\end{pmatrix}
\end{equation*}

If we sum up all the entries of each vector $ v^{i} $, we can verify that the numbers obtained are the Catalan numbers, the same sequence implied by production matrix (a) in Table \ref{fig:11} that counts the number of triangulations with vertices in convex position, as well as the number of non-crossing perfect matchings. Thanks to the new production matrix, we are able to partition Catalan numbers in a different way. So we present a new formula for the vectors $ v^{i} $, whose entries are different from the well-known Ballot numbers. The proof of the theorem is similar to the one of Theorem \ref{thm:32}, so it is deferred to Appendix \ref{A.3}.

\begin{theorem}
	\label{thm:2}
	The number of non-crossing partitions with $ n $ vertices, where the root vertex has isolation degree $ j-1, \ \forall j=1,\ldots,n+1 $ is:
	\begin{equation*}
		v_{j}^{n} = \frac{j}{n+1}\sum_{k=\big\lceil\frac{n+j+1}{2}\big\rceil}^{n+1}\binom{n+1}{k}\binom{k-j-1}{2k-n-j-1}2^{2k-n-j-1}.
	\end{equation*}
\end{theorem}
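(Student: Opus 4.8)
The plan is to mirror the proof of Theorem \ref{thm:32}: interpret $B_n$ as the transfer matrix of a proper Riordan Array, read off its $A$- and $Z$-sequences, compute the defining pair $(d(t),h(t))$, and then invoke Theorem \ref{thm:3} to extract a coefficient of a power of the $A$-sequence generating function.

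First I would read the two sequences off $B_n$ via Theorem \ref{thm:5}. The subdiagonal structure gives the $A$-sequence $\{1,0,1,2,4,8,\ldots\}$ (so $a_1=1$, $a_2=0$, and $a_i=2^{i-3}$ for $i\ge 3$), while the first row gives the $Z$-sequence $\{0,1,2,4,8,\ldots\}$. Summing the geometric tails, their generating functions are
\[
A(t)=1+\frac{t^{2}}{1-2t}=\frac{(1-t)^{2}}{1-2t},\qquad Z(t)=\frac{t}{1-2t}.
\]
Since multiplying $(1,0,\ldots)^{\top}$ by $B_n$ returns the first column $(0,1,0,\ldots)^{\top}=v^{1}$, the initial vector is $v^{0}=(1,0,\ldots)^{\top}$ and hence $d(0)=1$.

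Next I would compute the defining pair from (\ref{eq:13}). Writing $\omega=th(t)$, the equation $h(t)=A(\omega)$ gives $h(t)=\frac{(1-\omega)^{2}}{1-2\omega}$; substituting $t=\omega/h(t)$ into $d(t)=\frac{1}{1-tZ(\omega)}$ collapses the $Z$-term to $tZ(\omega)=\omega^{2}/(1-\omega)^{2}$, whence
\[
d(t)=\frac{1}{\,1-\frac{\omega^{2}}{(1-\omega)^{2}}\,}=\frac{(1-\omega)^{2}}{1-2\omega}=h(t).
\]
This algebraic cancellation, establishing $d(t)=h(t)$, is the technical heart of the argument and the step I expect to require the most care. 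With it in hand, the second case of Theorem \ref{thm:3} yields $d_{n,j'}=\frac{j'+1}{n+1}[t^{\,n-j'}]A(t)^{n+1}$. Because the initial vector sits at $c=0$, Theorem \ref{thm:5} identifies $v^{n}=B_n^{\,n}\bar d_0$, so $v_j^{n}=d_{n,j-1}$; setting $j'=j-1$ gives $v_j^{n}=\frac{j}{n+1}[t^{\,n-j+1}]A(t)^{n+1}$.

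Finally I would expand $A(t)^{n+1}=\bigl(1+\tfrac{t^{2}}{1-2t}\bigr)^{n+1}=\sum_{k}\binom{n+1}{k}\frac{t^{2k}}{(1-2t)^{k}}$ by the binomial theorem, and extract the coefficient of $t^{\,n-j+1}$ using $\frac{1}{(1-2t)^{k}}=\sum_{m}\binom{k-1+m}{m}2^{m}t^{m}$; this produces $\sum_{k}\binom{n+1}{k}\binom{n-j-k}{\,n-j+1-2k\,}2^{\,n-j+1-2k}$, with $k$ running from $0$ to $\lfloor\frac{n-j+1}{2}\rfloor$. Performing the reflection $k\mapsto n+1-k$ then rewrites $\binom{n+1}{k}$ unchanged, turns the exponent of $2$ into $2k-n-j-1$ and the binomial into $\binom{k-j-1}{\,2k-n-j-1\,}$, and maps the nonnegativity constraint on the exponent of $2$ into the lower limit $\lceil\frac{n+j+1}{2}\rceil$, giving exactly the stated form. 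The only remaining obstacle is the bookkeeping of this index reflection — tracking the ceiling in the summation bound and confirming the two binomials correspond — which I would verify against the sample vectors $v^{1},\dots,v^{4}$ as a sanity check.
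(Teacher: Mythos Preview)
Your proposal is correct and follows essentially the same approach as the paper's proof in Appendix~\ref{A.3}: identify the $A$- and $Z$-sequences of $B_n$, verify $d(t)=h(t)$ via the substitution $\omega=th(t)$, apply Theorem~\ref{thm:3}, and extract the coefficient of $t^{n-j+1}$ in $A(t)^{n+1}$. The only cosmetic difference is that the paper expands $\bigl(1+\tfrac{t^2}{1-2t}\bigr)^{n+1}$ with the exponent $n+1-k$ on the $\tfrac{t^2}{1-2t}$ term, which lands directly on the stated summation index, whereas you put the exponent $k$ there and then perform the reflection $k\mapsto n+1-k$ at the end; the content is identical.
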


\subsection{Characteristic polynomial}

Let $ b_n(\lambda) $ be the characteristic polynomial of $ B_n $. The sequence $ \{b_{n}(\lambda)\} $ starts with

\begin{equation*}
	\begin{tabular}{c|c}
		$ n = 1 $ & $ -\lambda $ \\
		$ n = 2 $ & $ \lambda^{2}-1 $ \\ 
		$ n = 3 $ & $ -\lambda^{3}+2\lambda+2 $ \\ 
		$ n = 4 $ & $ \lambda^{4}-3\lambda^{2}-4\lambda-3 $ \\
		$ n = 5 $ & $ -\lambda^{5}+4\lambda^{3}+6\lambda^{2}+5\lambda+4 $ \\
		$ n = 6 $ & $ \lambda^{6}-5\lambda^{4}-8\lambda^{3}-6\lambda^{2}-4\lambda-5 $
	\end{tabular}
\end{equation*}\bigskip

\begin{corollary}
	\label{thm:50}
	The characteristic polynomial $ b_{n}(\lambda) $ of the matrix $ B_{n} $ satisfies the recurrence relation
	\begin{equation*}
		b_{n}(\lambda) = -\lambda b_{n-1}(\lambda)-\frac{1}{4}\sum_{i=2}^{n}(-2)^{i}b_{n-i}(\lambda).
	\end{equation*}
\end{corollary}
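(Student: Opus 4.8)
The plan is to obtain this recurrence as a direct specialization of Theorem \ref{thm:13}, exactly in the spirit of Corollaries \ref{thm:40} and \ref{thm:43}. First I would identify the Hessenberg-Toeplitz parameters of $B_n$ by comparing it with the generic matrix $A_n$ of Section 2.2. Reading off the diagonals gives $a_0 = 0$ from the main diagonal, $a_{-1} = 1$ from the subdiagonal, and $a_1 = 1$, $a_2 = 2$, $a_3 = 4, \ldots$ from the superdiagonals, so that $a_j = 2^{j-1}$ for all $j \geq 1$; in the index notation of Theorem \ref{thm:13} this reads $a_{i-1} = 2^{i-2}$ for $i \geq 2$.

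Next I would substitute these values into the recurrence of Theorem \ref{thm:13}. Since $a_0 = 0$ the leading factor becomes $-\lambda$, and since $a_{-1} = 1$ the power $a_{-1}^{i-1}$ disappears, leaving
\[
	b_n(\lambda) = -\lambda\, b_{n-1}(\lambda) + \sum_{i=2}^{n}(-1)^{i+1} 2^{i-2}\, b_{n-i}(\lambda).
\]

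The final step is purely cosmetic: I would rewrite the coefficient $(-1)^{i+1}2^{i-2}$ in the form claimed in the statement. Using $(-1)^{i+1} = -(-1)^i$ and $2^{i-2} = \tfrac{1}{4} 2^i$ gives $(-1)^{i+1}2^{i-2} = -\tfrac{1}{4}(-1)^i 2^i = -\tfrac{1}{4}(-2)^i$, so the sum collapses to $-\tfrac{1}{4}\sum_{i=2}^{n}(-2)^i b_{n-i}(\lambda)$, matching the asserted formula with $b_0(\lambda)=1$. I do not expect any genuine obstacle: all the mathematical content is already packaged in Theorem \ref{thm:13}, and the work amounts to one correct reading of the matrix entries together with a single sign-and-power simplification. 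The only point that needs care is the off-by-one bookkeeping between the superdiagonal index $j$ and the summation index $i$, to ensure $a_{i-1} = 2^{i-2}$ rather than $2^{i-1}$.
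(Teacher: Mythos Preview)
Your proposal is correct and is exactly the paper's own argument: the paper states that Corollary \ref{thm:50} follows from Theorem \ref{thm:13} with $a_{-1}=1$, $a_0=0$, and $a_{i-1}=2^{i-2}$, which is precisely the specialization you carry out, including the rewriting $(-1)^{i+1}2^{i-2}=-\tfrac{1}{4}(-2)^i$.
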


Corollary \ref{thm:50} follows from Theorem \ref{thm:13} with $ a_{-1}=1, \ a_0=0 $, and $ a_{i-1}=2^{i-2} $.

\begin{theorem}
	\label{thm:4}
	The solution of the recurrence relation
	\begin{equation*}
		b_{n}(\lambda) = -\lambda b_{n-1}(\lambda)-\frac{1}{4}\sum_{i=2}^{n}(-2)^{i}b_{n-i}(\lambda)
	\end{equation*}
	with initial condition $ b_{0}(\lambda) = 1 $ is
	\begin{equation*}
		b_{n}(\lambda) = \sum_{t=0}^{n}\sum_{k=0}^{n}\sum_{\ell=0}^{k}\binom{k}{t}\binom{t}{\ell}(-1)^{k}2^{2k-n+t-2\ell}\left[4\binom{k-t}{2k-n-\ell+1}+\binom{k-t}{2k-n-\ell}\right] \lambda^{t}.
	\end{equation*}
\end{theorem}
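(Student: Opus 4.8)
The plan is to follow the transfer-matrix / Riordan-Array argument used in the proof of Theorem \ref{thm:41}. First I would encode the recurrence in the infinite matrix
$$
M = \begin{pmatrix}
-\lambda & -1 & 2 & -4 & \cdots \\
1 & 0 & 0 & 0 & \cdots \\
0 & 1 & 0 & 0 & \cdots \\
0 & 0 & 1 & 0 & \cdots \\
\vdots & \vdots & \vdots & \vdots & \ddots
\end{pmatrix},
$$
whose first row lists the coefficients $z_1=-\lambda$ and $z_i=(-1)^{i+1}2^{i-2}$ ($i\ge 2$) of $b_{n-i}(\lambda)$ in the recurrence (cf.\ Corollary \ref{thm:50}), and whose subdiagonal consists of $1$s. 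As in Theorem \ref{thm:41}, setting $w_i=(b_i,b_{i-1},\ldots,b_1,0,\ldots)^{\top}$ gives $Mw_i=w_{i+1}$, so $w_n$ is the first column of $M^n$ and, since $b_0(\lambda)=1$ is the unique nonzero entry of $\bar d_0=(1,0,\ldots)^{\top}$, Theorem \ref{thm:5} identifies $b_n(\lambda)=d_{n,0}=[t^n]d(t)$, where $d(t)$ is the first generating function of the associated Riordan array.

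Next I would compute the defining generating functions. The A-sequence is $\{1,0,0,\ldots\}$, so $A(t)=1$ and hence $h(t)=1$ by (\ref{eq:13}); the Z-sequence is $\{-\lambda,-1,2,-4,\ldots\}$, summing to $Z(t)=-\lambda-\frac{t}{1+2t}$. Formula (\ref{eq:13}) then yields, after clearing denominators,
$$
d(t)=\frac{1}{1-tZ(t)}=\frac{1+2t}{1+(2+\lambda)t+(2\lambda+1)t^{2}}.
$$
I would split this as $d(t)=D(t)+2tD(t)$ with $D(t)=\big(1+(2+\lambda)t+(2\lambda+1)t^{2}\big)^{-1}$, so that $b_n(\lambda)=[t^{n}]D(t)+2[t^{n-1}]D(t)$. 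The shift by one in the second extraction is exactly what produces the two binomials $\binom{k-t}{2k-n-\ell+1}$ and $\binom{k-t}{2k-n-\ell}$ in the stated bracket, and the prefactor $4=2\cdot 2$ arises from the leading $2t$ together with the extra power $2^{+1}$ that a shift of $n$ by one introduces.

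The key step is the expansion of $D(t)$. Writing $D(t)=\sum_{k\ge 0}(-1)^{k}t^{k}\big[(2+\lambda)+(2\lambda+1)t\big]^{k}$ by the geometric series, I would regroup the inner bracket by powers of $\lambda$ as $(2+t)+\lambda(1+2t)$, which is the crucial observation that keeps the computation a triple sum rather than a quadruple one. Applying the binomial theorem in $\lambda$ gives $[\lambda^{t}]D(t)=\sum_{k}(-1)^{k}\binom{k}{t}t^{k}(2+t)^{k-t}(1+2t)^{t}$; expanding $(2+t)^{k-t}$ and $(1+2t)^{t}$ by the binomial theorem and extracting $[t^{n}]$ produces a sum over $k$ and one further index with exponent $2^{2k-n-t+2\ell}$ and binomial $\binom{k-t}{n-k-\ell}$. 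The substitution $\ell\mapsto t-\ell$, together with the symmetries $\binom{t}{\ell}=\binom{t}{t-\ell}$ and $\binom{k-t}{m}=\binom{k-t}{(k-t)-m}$, converts these into $2^{2k-n+t-2\ell}$ and $\binom{k-t}{2k-n-\ell}$, matching the $\ell$-term of the claimed bracket. Repeating the extraction at $[t^{n-1}]$ yields $4\binom{k-t}{2k-n-\ell+1}$ with the same power of $2$, and adding the two contributions assembles the triple sum in the statement.

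I expect the main obstacle to be purely bookkeeping: keeping the geometric index $k$ and the two binomial expansions consistent, and carrying out the reindexing $\ell\mapsto t-\ell$ so that the exponents and binomial arguments match the stated form exactly. The places most prone to sign and index slips are the alignment of the $n$- and $(n-1)$-extractions (which must combine into the single bracket) and the verification on the base cases $n\le 2$ that the initial condition $b_0(\lambda)=1$ is respected. No genuinely new idea beyond the grouping $(2+t)+\lambda(1+2t)$ is needed.
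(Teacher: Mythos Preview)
Your proposal is correct and follows essentially the same Riordan-Array/transfer-matrix argument as the paper's proof (Appendix~\ref{B.2}): the same matrix $M$, the same $Z(t)=-\lambda-\tfrac{t}{1+2t}$ and $h(t)=1$, the same $d(t)=\dfrac{1+2t}{1+(2+\lambda)t+(2\lambda+1)t^{2}}$, and the same split $d(t)=D(t)+2tD(t)$ followed by a geometric-series expansion of $D(t)$.

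The only difference is inside the binomial expansion of $\bigl((2+\lambda)+(2\lambda+1)t\bigr)^{k}$. The paper groups it as $[(2+\lambda)]+[(2\lambda+1)t]$, expands in $t$ first, and then needs the subset-of-a-subset identity $\binom{n}{m}\binom{m}{k}=\binom{n}{k}\binom{n-k}{m-k}$ at the end to reach the stated form. Your regrouping $(2+t)+\lambda(1+2t)$ isolates powers of $\lambda$ immediately, so after the reindexing $\ell\mapsto t-\ell$ and two applications of binomial symmetry you land directly on $\binom{k}{t}\binom{t}{\ell}\binom{k-t}{2k-n-\ell}2^{2k-n+t-2\ell}$; this is a slightly cleaner bookkeeping choice, not a different method.
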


\bigskip

The proof of Theorem \ref{thm:4} is similar to the one of Theorem \ref{thm:41}, so it is deferred to Appendix \ref{B.2}. Finally, we give a formula for the entries of an eigenvector $ x $ associated to an eigenvalue $ \lambda $ of $ B_{n} $. The proof of the corollary follows from Theorem \ref{thm:14} with $ a_{-1} = 1 $.

\begin{corollary}
	Let $ x=(x_{n-1},x_{n-2},\ldots,x_{0})^{\top} $ be an eigenvector associated to an eigenvalue $ \lambda $ of $ B_{n} $ for $ n \geq 2 $. Then the entries of the vector $ x $ are of the form:
	
	\begin{equation*}
		x_{i} = (-1)^{i}b_{i}(\lambda)x_{0},
	\end{equation*}
	
	where $ b_{i}(\lambda) $ is the characteristic polynomial of $ B_{i} $.
\end{corollary}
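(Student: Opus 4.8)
The plan is to invoke Theorem \ref{thm:14} directly, since $B_n$ is an upper Hessenberg-Toeplitz matrix. First I would record the entries of $B_n$ in the Toeplitz notation of Section $2.2$: reading the symbol sequence off the first row and the subdiagonal gives $a_0 = 0$ on the main diagonal, $a_{-1} = 1$ on the first subdiagonal, and $a_{i-1} = 2^{i-2}$ for $i \geq 2$ along the superdiagonals. This is exactly the data already identified in the remark following Corollary \ref{thm:50} to set up the characteristic-polynomial recurrence. The single observation that matters for the eigenvector formula is that the subdiagonal constant is $a_{-1} = 1$.

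With this identification, Theorem \ref{thm:14} applies verbatim with $A_n = B_n$, whose $i$-th principal-minor characteristic polynomial is $b_i(\lambda)$. That theorem asserts that every eigenvector $x = (x_{n-1}, \ldots, x_0)^\top$ of $B_n$ for the eigenvalue $\lambda$ satisfies $x_i = \left(\frac{-1}{a_{-1}}\right)^i b_i(\lambda)\, x_0$ for all $i = 1, \ldots, n-1$. Substituting $a_{-1} = 1$ collapses the prefactor $\left(\frac{-1}{a_{-1}}\right)^i$ to $(-1)^i$, which yields precisely $x_i = (-1)^i b_i(\lambda)\, x_0$, the claimed formula.

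There is essentially no obstacle here, and no new computation is required: the entire content is already carried by Theorem \ref{thm:14}. The induction on the entry index, the appeal to the determinant recurrence of Theorem \ref{thm:13}, and the closing step using the first row of $A_n x = \lambda x$ together with $b_n(\lambda) = 0$ are all internal to the proof of Theorem \ref{thm:14}. Hence the only thing to check is the routine bookkeeping that $B_n$ fits the Hessenberg-Toeplitz template with $a_{-1} = 1$, after which the corollary follows immediately.
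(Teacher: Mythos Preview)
Your proposal is correct and matches the paper's own argument exactly: the paper simply states that the corollary follows from Theorem \ref{thm:14} with $a_{-1}=1$, which is precisely what you do after identifying the Hessenberg--Toeplitz data of $B_n$.
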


\section{Conclusions}

In this work, we have obtained new production matrices for several classes of graphs. Also, eigenvectors and characteristic polynomials for each matrix, and several other formulas were derived from the production matrices. This work is based on previous results about enumeration of graphs based on production matrices. Our results have been possible due to different definitions of the degree we used with each matrix. While in previous papers \textcolor{blue}{\cite{Clemens2, Geometric}}, the degree of the graphs was defined as the number of incident edges to the root vertex of the graph or the number of vertices on each block in the case of partitions, in this work, we used two new definitions of degree: number of visible vertices seen from the new root vertex, and the number of isolated vertices seen from the new root vertex. Different definitions of degree lead to different production matrices, as we have seen in the cases of plane geometric graphs or non-crossing partitions.

However, there are other important properties of the matrices which remain open. For instance, a complete solution to the eigenvalues of the production matrices remains elusive to us. Only the assymptotic growth of the largest eigenvalue when $ n $ tends to infinity is known, because it is equal to the assymptotic growth of the number of graphs of the considered class.

An interesting open problem is to find production matrices for bipartite graphs with point sets in convex position. A major challenge is to obtain production matrices for point sets that are not in convex position.\\
\bigskip

{\small \noindent \textbf{Acknowledgments}}

\vspace{1mm}

\begin{wrapfigure}{l}{0.12\textwidth}
	\vspace{-20pt}
	\includegraphics[trim=10cm 6cm 10cm 5cm,clip,scale=0.15]{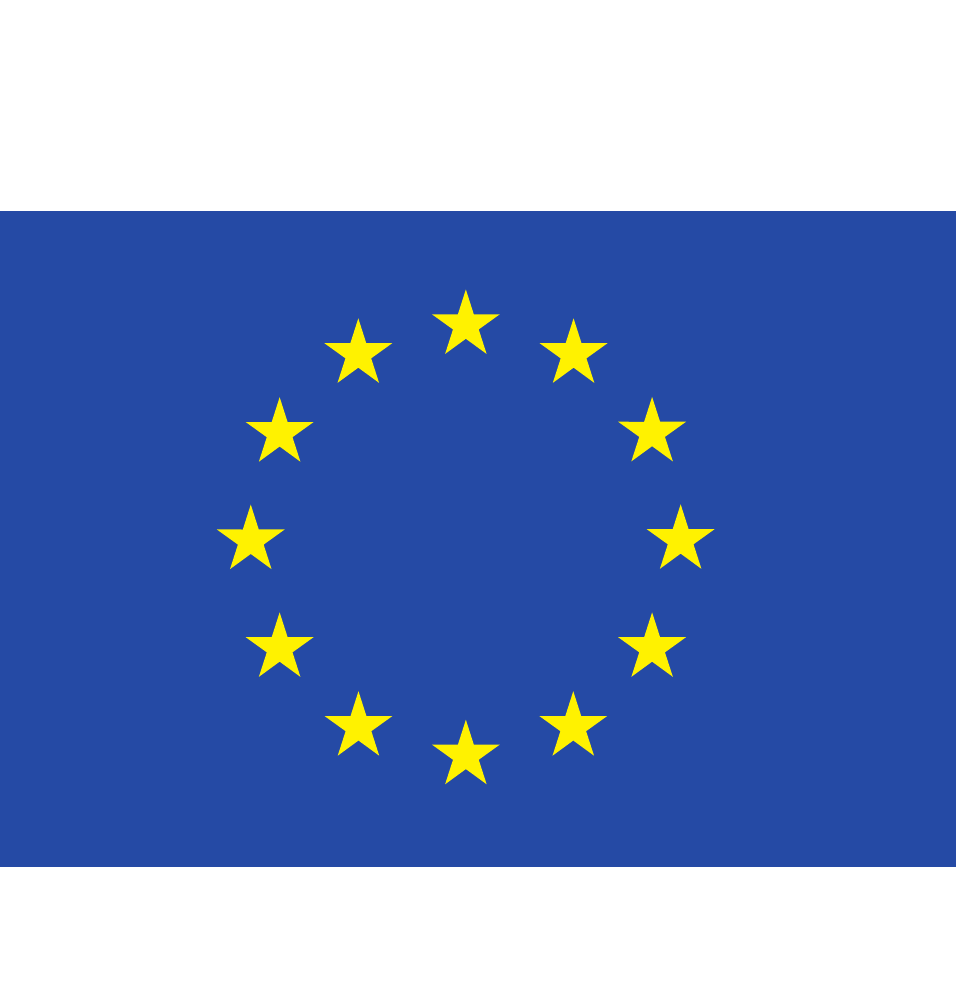}
	\vspace{-20pt}
\end{wrapfigure}

\small
\noindent This project has received funding from the European Union's Horizon 2020 research and innovation programme under the Marie Sk\l{}odowska-Curie grant agreement No 734922. C. H. and R. I. S. were supported by MINECO MTM2015-63791-R and Gen. Cat. 2017SGR1336 and 2017SGR1640.

\vspace{1mm}

\newpage

\begin{appendices}
	\addcontentsline{toc}{chapter}{Appendices}
	
	\section{Number of graphs}
	
		\subsection{Proof of Theorem \ref{thm:37}} \label{A.1}
		
			\begin{proof}
				The production matrix $ G_{n} $ has a corresponding Riordan Array. Using notation as in Section $ 2.1 $, the second row of $ G_{n} $ gives the A-sequence $ \{2,2,4,8,16,\ldots\} $ with generating function $ A(t) = \frac{2-2t}{1-2t} $ and its Z-sequence is given by the first row of $ G_{n} $ and is $ \{2,4,8,16,32,\ldots\} $ with generating function $ Z(t) = \frac{2}{1-2t} $. We have $ d(0) = 2 $ (because we use as starting vector $ v^{2} $), and by formula (\ref{eq:13})

				\begin{equation*}
					h(t) = A(th(t)) = \frac{2-2th(t)}{1-2th(t)}.
				\end{equation*}

				We calculate $ d(t) $ also with the formula given by (\ref{eq:13}):

				\begin{equation*}
					d(t) = \frac{d(0)}{1-tZ(th(t))} = \frac{2}{1-t(\frac{2}{1-2th(t)})} = \frac{2-4th(t)}{1-2th(t)-2t}.
				\end{equation*}

				We substitute $ th(t) = w $ and obtain
				{\fontsize{8.7}{7}\selectfont
				\begin{multline*}
					d(t) = \frac{2-4w}{1-2w-2\frac{w}{h(t)}} = \frac{h(t)(2-4w)}{h(t)-2wh(t)-2w} = \frac{\frac{2-2w}{1-2w}(2-4w)}{\frac{2-2w}{1-2w}-2w\frac{2-2w}{1-2w}-2w} = \frac{(2-2w)(2-4w)}{(2-2w)-2w(2-2w)-2w(1-2w)} = \\
					= \frac{(2-2w)(2-4w)}{(2-2w)(1-2w)-2w(1-2w)} = \frac{(2-2w)(2-4w)}{(1-2w)(2-4w)} = \frac{2-2w}{1-2w}.
				\end{multline*}}
				
				From this result, we see that $ d(t) = h(t) $. Then, Theorem \ref{thm:3} states that $ d_{n,j} = \frac{j+1}{n+1}[t^{n-j}]A(t)^{n+1} $. It follows from this theorem together with Theorem \ref{thm:5}, (and a shift of index) that the $ j $-th entry of $ v^{n} $ is the coefficient of $ t^{n-j-1} $ in the Taylor expansion of $ \frac{j}{n-1}(A(t))^{n-1} $. We expand

				\begin{equation*}
					A(t)^{n-1} = \left(\frac{2-2t}{1-2t}\right)^{n-1} = 2^{n-1}\left(\frac{1}{1-2t} - \frac{t}{1-2t}\right)^{n-1}.
				\end{equation*}

				And now, using the binomial formula for the algebraic expansion of powers of a binomial and using the well-known identity (\ref{eq:8}), we get the following result.
				\begin{multline*}	
					A(t)^{n-1} = 2^{n-1}\sum_{k=0}^{n-1} \binom{n-1}{k}\left(\frac{-t}{1-2t}\right)^{n-1-k}\left(\frac{1}{1-2t}\right)^{k} = 2^{n-1}\sum_{k=0}^{n-1} \binom{n-1}{k}(-t)^{n-1-k}\left(\frac{1}{1-2t}\right)^{n-1}	= \\
					= 2^{n-1}\sum_{k=0}^{n-1} \binom{n-1}{k}(-t)^{n-1-k}\left(\frac{1}{1-2t}\right)^{n-1} = 2^{n-1}\sum_{k=0}^{n-1} \binom{n-1}{k}(-1)^{n-1-k}t^{n-1-k}\sum_{\ell=0}^{\infty} \binom{n-1+\ell-1}{\ell} (2t)^{\ell} = \\
					= 2^{n-1}\sum_{k=0}^{n-1} \binom{n-1}{k}(-1)^{n-1-k}\sum_{\ell=0}^{\infty} \binom{n-1+\ell-1}{\ell} 2^{\ell}t^{n-1-k+\ell}.
				\end{multline*}
				It remains to determine the coefficient of $ t^{n-j-1} $. We set $ n-j-1 = n-1-k+\ell $ and therefore $ \ell = k - j $. Note that in the summation we can put $ k \geq j $. We thus arrive at the claimed formula
				\begin{multline*}
					v_{j}^{n} = \frac{j}{n-1}2^{n-1}\sum_{k=j}^{n-1}\binom{n-1}{k}(-1)^{n-1-k}\binom{n+k-j-2}{k-j}2^{k-j} = \\
					= \frac{j}{n-1}2^{n-1-j}\sum_{k=j}^{n-1}\binom{n-1}{k}(-1)^{n-1-k}\binom{n+k-j-2}{k-j}2^{k}. \tag*{\qedhere}
				\end{multline*} 
			\end{proof}
		
		\subsection{Proof of Theorem \ref{thm:7}} \label{A.2}
			\begin{proof}
				The production matrix $ C_{n} $ has a corresponding Riordan Array. Using notation as in Section $ 2.1 $, the second row of $ C_{n} $ gives the A-sequence $ \{1,3,7,15,31,\ldots\} $ with generating function $ A(t) = \frac{1}{(1-2t)(1-t)} $ and its Z-sequence is given by the first row of $ C_{n} $ and is $ \{3,7,15,31,63,\ldots\} $ with generating function $ Z(t) = \frac{3-2t}{(1-2t)(1-t)} $. We have $ d(0) = 1 $ and by formula (\ref{eq:13})
				\begin{equation*}
					h(t) = A(th(t)) = \frac{1}{(1-2th(t))(1-th(t))}.
				\end{equation*}

				We calculate $ d(t) $ using formula (\ref{eq:13})

				\begin{equation*}
					d(t) = \frac{d(0)}{1-tZ(th(t))} = \frac{1}{1-t\left(\frac{3-2th(t)}{(1-2th(t))(1-th(t))}\right)} = \frac{(1-2th(t))(1-th(t))}{(1-2th(t))(1-th(t))-3t+2t^{2}h(t)}.
				\end{equation*}

				We substitute $ th(t) = w $ and obtain
				\begin{multline*}
					d(t) = \frac{(1-2w)(1-w)}{(1-2w)(1-w)-3\frac{w}{h(t)}+2\frac{w^{2}}{h(t)}} = \frac{h(t)(1-2w)(1-w)}{h(t)(1-2w)(1-w)-3w+2w^{2}} = \\
					=  \frac{\frac{1}{(1-2w)(1-w)}(1-2w)(1-w)}{\frac{1}{(1-2w)(1-w)}(1-2w)(1-w)-3w+2w^{2}} = \frac{1}{1-3w+2w^2} = \frac{1}{(1-2w)(1-w)}.
				\end{multline*}

				From this result, we see that $ d(t) = h(t) $. Then, Theorem \ref{thm:3} states that $ d_{n,j} = \frac{j+1}{n+1}[t^{n-j}]A(t)^{n+1} $. It follows from this theorem together with Theorem \ref{thm:5}, (and a shift of index) that the $ j $-th entry of $ v_{n} $ is the coefficient of $ t^{n-j-1} $ in the Taylor expansion of $ \frac{j}{n-1}(A(t))^{n-1} $. Firstly, we use the partial fraction decomposition, and divide the generating function $ A(t) $ into two terms in order to apply the binomial theorem to them.
				\begin{multline*}
					A(t)^{n-1} = \left(\frac{1}{(1-2t)(1-t)}\right)^{n-1} = \left(\frac{2}{1-2t} - \frac{1}{1-t}\right)^{n-1} = \\
					= \sum_{k=0}^{n-1} \binom{n-1}{k}\left(\frac{2}{1-2t}\right)^{n-1-k}\left(\frac{-1}{1-t}\right)^{k} = 2^{n-1}\sum_{k=0}^{n-1} \binom{n-1}{k}2^{-k}\left(\frac{1}{1-2t}\right)^{n-1-k}(-1)^{k}\left(\frac{1}{1-t}\right)^{k}.
				\end{multline*}
				
				Secondly, we apply the identity given by (\ref{eq:8}), so the previous equation is equal to
				\begin{multline*}
					A(t)^{n-1} = 2^{n-1}\sum_{k=0}^{n-1} \binom{n-1}{k}\left(-\frac{1}{2}\right)^{k}\sum_{\ell=0}^{\infty} \binom{n-k+\ell-2}{\ell} (2t)^{\ell}\sum_{m=0}^{\infty}\binom{k+m-1}{m}t^{m} = \\
					= 2^{n-1}\sum_{k=0}^{n-1} \binom{n-1}{k}\left(-\frac{1}{2}\right)^{k}\sum_{\ell=0}^{\infty} \binom{n-k+\ell-2}{\ell} 2^{\ell}\sum_{m=0}^{\infty}\binom{k+m-1}{m}t^{\ell+m}.
				\end{multline*}

				It remains to determine the coefficient of $ t^{n-j-1} $. We set $ n-j-1 = \ell+m $ and therefore $ m = n-\ell-j-1 $. 

				\begin{equation*}
					A(t)^{n-1} = 2^{n-1}\sum_{k=0}^{n-1} \binom{n-1}{k}\left(-\frac{1}{2}\right)^{k}\sum_{\ell=0}^{\infty} \binom{n-k+\ell-2}{\ell} 2^{\ell}\binom{k+n-\ell-j-2}{n-\ell-j-1}t^{n-j-1}.
				\end{equation*}

				Note that in the summation we can put $ \ell \leq n-j-1 $. We thus arrive at the claimed formula:

				\begin{equation*}
					v_{j}^{n} = \frac{j}{n-1}2^{n-1}\sum_{k=0}^{n-1}\binom{n-1}{k}\left(-\frac{1}{2}\right)^{k}\sum_{\ell= 0}^{n-j-1}\binom{n-2-k+\ell}{\ell}\binom{k+n-\ell-j-2}{n-\ell-j-1}2^{\ell}. \tag*{\qedhere}
				\end{equation*}
			\end{proof}
		
		\subsection{Proof of Theorem \ref{thm:2}}\label{A.3}
			\begin{proof}
				The production matrix $ B_{n} $ has a corresponding Riordan Array. Using notation as in Section $ 2.1 $, the second row of $ B_{n} $ gives the A-sequence $ \{1,0,1,2,4,\ldots\} $ with generating function $ A(t) = \frac{(t-1)^2}{1-2t} $ and its Z-sequence is given by the first row of $ B_{n} $ and is $ \{0,1,2,4,8,\ldots\} $ with generating function $ Z(t) = \frac{t}{1-2t} $. We have $ d(0) = 1 $ and by formula (\ref{eq:13})
				\begin{equation*}
					h(t) = A(th(t)) = \frac{(th(t)-1)^2}{1-2th(t)}.
				\end{equation*}

				We calculate $ d(t) $:

				\begin{equation*}
					d(t) = \frac{d(0)}{1-tZ(th(t))} = \frac{1}{1-t(\frac{th(t)}{1-2th(t)})} = \frac{1-2th(t)}{1-2th(t)-t^2h(t)}.
				\end{equation*}

				We substitute $ th(t) = w $ and obtain
				{\fontsize{8.63}{7}\selectfont
				\begin{multline*}
					d(t) = \frac{1-2w}{1-2w-\frac{w^2}{h(t)}} = \frac{h(t)(1-2w)}{h(t)-2wh(t)-w^2} = \frac{\frac{(w-1)^2}{1-2w}(1-2w)}{\frac{(w-1)^2}{1-2w}-\frac{2w(w-1)^2}{1-2w}-w^2} = \frac{(w-1)^2(1-2w)}{(w-1)^2-2w(w-1)^2-w^2(1-2w)} = \\ = \frac{(w-1)^2(1-2w)}{(w-1)^2(1-2w)-w^2(1-2w)} = \frac{(w-1)^2(1-2w)}{[(w-1)^2-w^2](1-2w)} = \frac{(w-1)^2}{1-2w}.
				\end{multline*}}
				
				From this result, we see that $ d(t) = h(t) $. Then, Theorem \ref{thm:3} states that $ d_{n,j} = \frac{j+1}{n+1}[t^{n-j}]A(t)^{n+1} $. It follows from this theorem together with Theorem \ref{thm:5}, (and a shift of index) that the $ j $-th entry of $ v^{n} $ is the coefficient of $ t^{n-j+1} $ in the Taylor expansion of $ \frac{j}{n+1}(A(t))^{n+1} $. Using the well-known identity (\ref{eq:8}) we expand
				\begin{multline*}
					A(t)^{n+1} = \left(\frac{(t-1)^2}{1-2t}\right)^{n+1} = \left(\frac{t^2}{1-2t} + 1\right)^{n+1} = \sum_{k=0}^{n+1} \binom{n+1}{k}\left(\frac{t^2}{1-2t}\right)^{n+1-k} = \\
					= \sum_{k=0}^{n+1} \binom{n+1}{k}t^{2n+2-2k} \left(\frac{1}{1-2t}\right)^{n+1-k}
					= \sum_{k=0}^{n+1} \binom{n+1}{k}t^{2n+2-2k}\sum_{\ell=0}^{\infty} \binom{n+1-k+\ell-1}{\ell} (2t)^{\ell} = \\
					= \sum_{k=0}^{n+1}\sum_{\ell=0}^{\infty}\binom{n+1}{k}\binom{n-k+\ell}{\ell}2^{\ell}t^{\ell+2n+2-2k}.
				\end{multline*}

				It remains to determine the coefficient of $ t^{n-j+1} $. We set $ n-j+1 = \ell+2n+2-2k $ and therefore $ \ell = 2k-n-j-1 $. Note that $ k \geq \frac{n+j+1}{2} $. We thus arrive at the claimed formula

				\begin{equation*}
					v_{j}^{n} = \frac{j}{n+1}\sum_{k=\big\lceil\frac{n+j+1}{2}\big\rceil}^{n+1}\binom{n+1}{k}\binom{k-j-1}{2k-n-j-1}2^{2k-n-j-1}.\tag*{\qedhere}
				\end{equation*}
			\end{proof}

	\section{Characteristic polynomials}
	
		\subsection{Proof of Theorem \ref{thm:8}}\label{B.1}
			\begin{proof}
				Consider the infinite matrix
				
				\begin{equation*}
					M = \begin{pmatrix}
						3-\lambda & -(2^3-1) & 2^4-1 & -(2^5-1) & \cdots \\
						1 & 0 & 0 & 0 & \cdots \\
						0 & 1 & 0 & 0 & \cdots \\
						0 & 0 & 1 & 0 & \cdots \\
						0 & 0 & 0 & 1 & \cdots \\
						\vdots & \vdots & \vdots & \vdots & \ddots \\
					\end{pmatrix}
				\end{equation*}
				
				and let $ w_{0} $ be the vector $ (1,0,\ldots)^{\top} $, and let $ w_{i} $ be the vector whose first $ i $ entries are the first $ i $ characteristic polynomials $ c_{i}(\lambda) $, and the remaining ones are zero. That is, $ w_{i} = (c_{i}(\lambda), c_{i-1}(\lambda), \ldots, c_{1}(\lambda), 0, \ldots)^{\top} $. Then $ M \cdot w_{i} = w_{i+1} $. It follows that $ w_{n} $ is the first column of $ M^{n} $. We can now use the Riordan Array approach. The Z-sequence is $ \{3-\lambda, -7, 15, \ldots\} $ with generating function $ Z(t) = -\lambda + \frac{4}{1+2t}- \frac{1}{1+t}$. The A-sequence is $ \{1,0,\ldots\} $ with generating function $ A(t) = 1 $. It follows that $ h(t) = 1 $ and
				
				\begin{equation*}
					d(t) = \frac{1}{1-t(-\lambda + \frac{4}{1+2t} - \frac{1}{1+t})} = \frac{1+3t+2t^{2}}{1+t(\lambda(1+3t+2t^{2}))} = d_{1}(t)+3td_{1}(t) + 2t^{2}d_{1}(t),
				\end{equation*}
				
				where
				\begin{equation*}
					d_{1}(t) = \frac{1}{1+t(\lambda(1+3t+2t^{2}))}.
				\end{equation*}
				
				Then,
				\begin{equation}
					\label{eq:1}
					d_{n,j} = [t^n]d(t)(th(t))^{j} = [t^n]d(t)t^{j} = [t^{n-j}]d(t) = [t^{n-j}]d_{1}(t) + 3[t^{n-1-j}]d_{1}(t) + 2[t^{n-2-j}]d_{1}(t).
				\end{equation}
				
				We set $ z= \lambda(1+3t+2t^{2}) $, and using the well-known identity (\ref{eq:5}):
				\begin{equation*}
					d_{1}(t)=\sum_{k=0}^{\infty}(-1)^kt^{k}(\lambda(1+3t+2t^{2}))^{k}.
				\end{equation*}

				We apply the binomial theorem to $ (\lambda(1+3t+2t^{2}))^{k} $. We then only have to take the coefficient of $ t^{n-j} $ in $ d_{1}(t) $ for the first term of (\ref{eq:1}), the coefficient of $ t^{n-1-j} $ in $ d_{1}(t) $ for the second term of (\ref{eq:1}) and the coefficient of $ t^{n-2-j} $ for the third term of (\ref{eq:1}). After some short calculations, this gives the formula:
				{\fontsize{8.4}{7}\selectfont
				\begin{multline*}
					c_{n}(\lambda) = \sum_{k=0}^{n}\sum_{\ell=0}^{k}(-1)^{k}\binom{k}{\ell}\binom{\ell}{n+2\ell-3k}2^{k-\ell}3^{n+2\ell-3k}\lambda^{k} + \sum_{k=0}^{n}\sum_{\ell=0}^{k}(-1)^{k}\binom{k}{\ell}\binom{\ell}{n+2\ell-3k-1}2^{k-\ell}3^{n+2\ell-3k-1} \lambda^{k} \\
					+ \sum_{k=0}^{n}\sum_{\ell=0}^{k}(-1)^{k}\binom{k}{\ell}\binom{\ell}{n+2\ell-3k-2}2^{k-\ell+1}3^{n+2\ell-3k-2} \lambda^{k}.
				\end{multline*}}
				We finally rewrite $ c_{n}(\lambda) = \sum_{t=0}^{n} c_{t}\lambda^{t} $, with $ c_{t} $ the coefficient of $ \lambda^{t} $. Since $ \lambda^{t} = \lambda^{k} $ we set $ k = t $ and get after some elementary operations:

				\begin{equation*}
					c_{n}(\lambda) = \sum_{t=0}^{n}\left(\sum_{\ell=0}^{t}\binom{t}{\ell}(-1)^{t}2^{t-\ell}3^{n+2\ell-3t-2}\left[2\binom{\ell}{n+2\ell-3t-2}+9\binom{\ell+1}{n+2\ell-3t}\right]\right)\lambda^{t}. \tag*{\qedhere}
				\end{equation*}
			\end{proof}
			
		\subsection{Proof of Theorem \ref{thm:4}}\label{B.2}
			\begin{proof}
				Consider the infinite matrix
				
				\begin{equation*}
					M = \begin{pmatrix}
						-\lambda & -2^0 & 2^1 & -2^2 & \cdots \\
						1 & 0 & 0 & 0 & \cdots \\
						0 & 1 & 0 & 0 & \cdots \\
						0 & 0 & 1 & 0 & \cdots \\
						0 & 0 & 0 & 1 & \cdots \\
						\vdots & \vdots & \vdots & \vdots & \ddots \\
					\end{pmatrix}
				\end{equation*}
				
				and let $ w_{0} $ be the vector $ (1,0,\ldots)^{\top} $, and let $ w_{i} $ be the vector whose first $ i $ entries are the first $ i $ characteristic polynomials $ b_{i}(\lambda) $, and the remaining ones are zero. That is, $ w_{i} = (b_{i}(\lambda), b_{i-1}(\lambda), \ldots, b_{1}(\lambda), 0, \ldots) $. Then $ M \cdot w_{i} = w_{i+1} $. It follows that $ w_{n} $ is the first column of $ M^{n} $. We can now use the Riordan Array approach. The Z-sequence is $ \{-\lambda, -1, 2, -4, \ldots\} $ with  generating function $ Z(t) = -\lambda - \frac{t}{1+2t}$. The A-sequence is $ \{1,0,\ldots\} $ with generating function $ A(t) = 1 $. It follows that $ h(t) = 1 $ and
				
				\begin{equation*}
					d(t) = \frac{1}{1-t(-\lambda - \frac{t}{1+2t})} = \frac{1+2t}{1+t((2+\lambda)+(2\lambda+1)t)}.
				\end{equation*}
				
				Then,
				\begin{equation}
					\label{eq:2}
					d_{n,j} = [t^n]d(t)(th(t))^{j} = [t^n]d(t)t^{j} = [t^{n-j}]d(t) = [t^{n-j}]d_{1}(t) + 2[t^{n-1-j}]d_{1}(t),
				\end{equation}
				
				where
				\begin{equation*}
					d_{1}(t) = \frac{1}{1+t((2+\lambda)+(2\lambda+1)t)}.
				\end{equation*}
				
				We set $ z= (2+\lambda)+(2\lambda+1)t $, and using equation (\ref{eq:5}):

				\begin{equation*}
					d_{1}(t)=\sum_{k=0}^{\infty}(-1)^kt^{k}((2+\lambda)+(2\lambda+1)t)^{k}.
				\end{equation*}

				We apply the binomial theorem to $ ((2+\lambda)+(2\lambda+1)t)^{k} $. We then only have to take the coefficient of $ t^{n-j} $ in $ d_{1}(t) $ for the left term of (\ref{eq:2}) and the coefficient of $ t^{n-1-j} $ in $ d_{1}(t) $ for the right term of (\ref{eq:2}). After some short calculations, this gives the formula
				\begin{multline*}
					b_{n}(\lambda) = \sum_{k=0}^{n}\sum_{\ell=0}^{2k-n}\sum_{i=0}^{n-k}(-1)^{k} \binom{k}{n-k}\binom{2k-n}{\ell}\binom{n-k}{i} 2^{2k-n-\ell+i} \lambda^{\ell+i}+\\
					+2\sum_{k=0}^{n}\sum_{\ell=0}^{2k-n+1}\sum_{i=0}^{n-k-1}(-1)^{k}\binom{k}{n-k-1}\binom{2k-n+1}{\ell}\binom{n-k-1}{i} 2^{2k-n-\ell+i+1} \lambda^{\ell+i}.
				\end{multline*}

				Note that in the summation we can put $ \ell \leq k $.
				\begin{multline*}
					b_{n}(\lambda) = \sum_{k=0}^{n}\sum_{\ell=0}^{k}\sum_{i=0}^{n-k}(-1)^{k} \binom{k}{n-k}\binom{2k-n}{\ell}\binom{n-k}{i} 2^{2k-n-\ell+i} \lambda^{\ell+i}+\\
					+2\sum_{k=0}^{n}\sum_{\ell=0}^{k}\sum_{i=0}^{n-k-1}(-1)^{k}\binom{k}{n-k-1}\binom{2k-n+1}{\ell}\binom{n-k-1}{i} 2^{2k-n-\ell+i+1} \lambda^{\ell+i}.
				\end{multline*}

				We finally rewrite $ b_{n}(\lambda) = \sum_{t=0}^{n} c_{t}(\lambda)\lambda^{t} $, with $ c_{t}(\lambda) $ the coefficient of $ \lambda^{t} $. Since $ \lambda^{t} = \lambda^{\ell+i} $ we set $ i = t-\ell $ and get
				\begin{multline*}
					b_{n}(\lambda) = \sum_{t=0}^{n}\sum_{k=0}^{n}\sum_{\ell=0}^{k}(-1)^{k} \binom{k}{n-k}\binom{2k-n}{\ell}\binom{n-k}{t-\ell} 2^{2k-n+t-2\ell} \lambda^{t}+\\
					+\sum_{t=0}^{n}\sum_{k=0}^{n}\sum_{\ell=0}^{k}(-1)^{k}\binom{k}{n-k-1}\binom{2k-n+1}{\ell}\binom{n-k-1}{t-\ell} 2^{2k-2\ell-n+t+2} \lambda^{t}.
				\end{multline*}

				After some elementary operations, and taking into account the subset of a subset identity $ \binom{n}{m}\binom{m}{k} = \binom{n}{k}\binom{n-k}{m-k} $ we finally get the formula

				\begin{equation*}
					b_{n}(\lambda) = \sum_{t=0}^{n}\sum_{k=0}^{n}\sum_{\ell=0}^{k}\binom{k}{t}\binom{t}{\ell}(-1)^{k}2^{2k-n+t-2\ell}\left[4\binom{k-t}{2k-n-\ell+1}+\binom{k-t}{2k-n-\ell}\right] \lambda^{t}. \tag*{\qedhere}
				\end{equation*}
			\end{proof}
	
\end{appendices}

\end{document}